\newtheorem{lem}{Lemma}[section]
\newtheorem{prop}[lem]{Proposition}
\newtheorem{thm}[lem]{Theorem}
\newtheorem{rem}[lem]{Remark}
\renewcommand{\a}{\alpha}
\renewcommand{\b}{\beta}
\newcommand{\ga}{\gamma}
\newcommand{\la}{\lambda}
\newcommand{\E}{\mathbf{E}}
\def\R{{\mathbb{R}}}
\def\N{{\mathbb{N}}}
\def\Z{{\mathbb{Z}}}
\def\T{{\mathbb{T}}}
\def\M{{\mathcal{M}}}
\def\A{{\mathcal{A}}}
\def\E{{\mathcal{E}}}
\newcommand{\vertiii}[1]{{\left\vert\kern-0.25ex\left\vert\kern-0.25ex\left\vert #1
    \right\vert\kern-0.25ex\right\vert\kern-0.25ex\right\vert}}
\numberwithin{equation}{section}
\begin{document}

\title[Thermal conductivity in a magnetic field]{Thermal conductivity for coupled charged harmonic oscillators with noise in a magnetic field }

\author[K.~Saito]{Keiji Saito}
\address{Department of Physics, Faculty of Science and Technology, Keio University, 3-14-1, Hiyoshi, Kohoku-ku, Yokohama, Kanagawa, 223-8522, Japan}
\email{saitoh@rk.phys.keio.ac.jp}

\author[M.~Sasada]{Makiko Sasada}
\address{Graduate School of Mathematical Sciences, University of Tokyo, 3-8-1, Komaba, Meguro-ku, Tokyo, 153--8914, Japan}
\email{sasada@ms.u-tokyo.ac.jp}

\begin{abstract}
We introduce a $d$-dimensional system of charged harmonic oscillators in a magnetic field perturbed by a stochastic dynamics which conserves energy but not momentum. We study the thermal conductivity via the Green--Kubo formula, focusing on the asymptotic behavior of the Green--Kubo integral up to time $t$ (i.e., the integral of the correlation function of the total energy current). We employ the microcanonical measure to calculate the Green--Kubo formula in general dimension $d$ for uniformly charged oscillators. 
We also develop a method to calculate the Green--Kubo formula with the canonical measure for uniformly and alternately charged oscillators in dimension $1$. We prove that the
thermal conductivity diverges in dimension $1$ and $2$ while it remains finite in dimension $3$. 
The Green--Kubo integral calculated with the microcanonical ensemble diverges 
as $t^{1/4}$ for uniformly charged oscillators in dimension $1$,
while it is known to diverge as $t^{1/2}$ without magnetic field. This is the first rigorous example of the new exponent 
$1/4$ in the asymptotic behavior for the Green--Kubo integral.
 We also demonstrate that our result provides the first rigorous example of a diverging thermal conductivity with vanishing sound speed. 
In addition, employing the canonical measure in the Green--Kubo formula, we prove 
 that the Green--Kubo integral for uniformly and alternately charged oscillators respectively diverges as $t^{1/4}$ and $t^{1/2}$.
This means that the exponent depends not only on a non-zero magnetic field but also on the charge structure of oscillators.
\end{abstract}

\keywords{Thermal conductivity, harmonic chain of oscillators, anomalous diffusion, magnetic field, Hamiltonian system with noise, microcanonical state, canonical state}


\date{\today}

\maketitle

\setcounter{tocdepth}{3}
\tableofcontents
\newpage

\section{Introduction}

\subsection{Background and summary}
In low dimensions, the thermal conductivity generally diverges 
in the thermodynamic limit. 
This phenomenon is called anomalous heat transport or superdiffusion of energy, a research area of which has now become an interdisciplinary field from physics to mathematics \cite{L,BBJKO,Sp,LLP03,dhar08}. 

Diverging thermal conductivity and 
superdiffusion of energy have been studied for prototypical interacting systems
with several conservation laws \cite{SS,BS,Popkov,BGJ,BG}. Among them, the system of oscillators is one of the most studied models.
Using these models, 
the mechanism of diverging thermal conductivity has been studied mainly through the Green--Kubo formula. Considering the $1$-dimensional periodic chain of $N$ oscillators, we define the local energy $\E_x $ of the oscillator labeled by $x\in \T_N$, where $\T_N=\Z/N\Z$ is the discrete torus. From the conservation of total energy, the local energy current is defined through the continuity equation with respect to energy as $\partial_t \E_x (t)= J_{x-1,x} (t) - J_{x,x+1} (t) $, where $J_{x,x+1}$ is the energy current between the sites $x$ and $x+1$. 
Then, up to a conventional normalization, the Green--Kubo formula for thermal conductivity $\kappa$ is written as 
\begin{align*}
\kappa &= \lim_{t\to\infty} \kappa_{GK} (t) \,,\\
\kappa_{GK} (t) &= \lim_{N\to\infty}\sum_{x\in \T_N} \int_{0}^{t} ds \langle J_{x,x+1} (s)J_{0,1} (0) \rangle  \,,   
\end{align*}
where the thermal conductivity $\kappa$ is obtained by taking the infinite time limit in the Green--Kubo integral $\kappa_{GK}(t)$.
The symbol $\langle...\rangle$ implies the average over the microcanonical or canonical ensemble.
Note that the Green--Kubo integral $\kappa_{GK}(t)$ contains the correlation function of the instantaneous total energy current.
In general low-dimensional systems, the correlation function shows a power-law decay in the long-time regime, and this leads to the divergence of the Green--Kubo integral.

So far,
there have been many discussions on the divergence of the Green--Kubo formula in terms of the asymptotic behavior of the Green--Kubo integral and the current correlation function with numerical and analytical approaches \cite{L}. Recently, many theoretical advances have been achieved. The key-ingredient of the theories is to focus on the conserved quantities.
In any nonlinear chain of oscillators connected with spring forces depending only on the distance between particles, the momentum and energy are conserved. In addition, the so-called stretch is also conserved. Renormalization type arguments for the hydrodynamic equations of the conserved quantities are performed and they predict a $t^{1/3}$ asymptotic behavior of the Green--Kubo integral in $1$-dimensional systems \cite{NR}. More recently, nonlinear fluctuating hydrodynamics argues for a remarkable connection between the dynamics of the sound modes in heat conduction and the Kardar--Parisi--Zhang equation \cite{Sp}. The asymptotic divergent behavior of the Green--Kubo integral is classified into $t^{1/3}$ or $t^{1/2}$ depending on the symmetry of the potential function and the pressure \cite{Sp}. The nonlinear fluctuating hydrodynamics also classifies the possible universality class of the dynamical exponent in general diffusive dynamics with several conservation laws, and a Fibonacci sequence of exponents, including the golden mean, is found \cite{Popkov}.

Another important approach to understand the diverging thermal conductivity relies on an exactly soluble models, among of them the momentum exchange model \cite{BBO}. This model shows a hybrid dynamics consisting of a deterministic part and stochastic perturbation.
In the momentum exchange model, the momenta of the nearest neighbor oscillators are stochastically exchanged so that the conserved quantities defined for deterministic dynamics remain conserved. 
A rigorous analysis shows the $s^{-1/2}$ power-law decay in the correlation of the total current if the chain is harmonic \cite{BBO}, and this leads to a $t^{1/2}$ divergence in the Green--Kubo integral.
For this harmonic momentum exchange model, it is also established that under a proper space--time scaling limit, the macroscopic energy diffusion is described by a fractional diffusion equation \cite{JKO}. Recently variants of the momentum exchange model have been proposed to simplify the analysis \cite{BS,BGJ}.

In this paper, we propose another variant of the harmonic momentum exchange model, where the oscillators are charged and a magnetic field is applied. We will show a new exponent in the divergence of the Green--Kubo integral. The magnetic field induces cyclotron motion of the particle and hence the standard momentum conservation is broken.
This model was partially analyzed in the previous paper \cite{TSS}.
Herein we generalize the previous work and develop several novel mathematical techniques
We consider a system where the charged harmonic oscillators are arranged on the $d$-dimensional lattice with periodic boundary conditions and each oscillator can move in the $d^*$-dimensional space. We assume $d^* \ge 2$ so to be able to apply a magnetic field, but $d\, (\ge 1)$ is arbitrary. 
The goal of our study is to derive the asymptotic behavior of the Green--Kubo integral for this model. To this end, we calculate the current correlation function with microcanonical and canonical measures in different lattice dimension $d$ and charge structures of oscillators.

In Section \ref{microsec}, we study the \
thermal conductivity via the \textit{microcanonical} Green--Kubo formula in general dimension $d$ for uniformly charged oscillators, namely the case where the charges of all oscillators are equal. Here, the microcanonical Green--Kubo formula implies that we employ the microcanonical ensemble for the correlation function of energy current in the Green--Kubo integral. We show that the thermal conductivity diverges in lattice dimensions $1$ and $2$, while it is finite in $d \ge 3$. 
In dimension $1$, the microcanonical Green--Kubo integral diverges as $t^{1/4}$.
This is a remarkable difference from the case with zero magnetic field having $t^{1/2}$ behavior as reported in \cite{BBO}.
This is the first rigorous example of the exponent 
  $1/4$ in the divergent behavior of the Green--Kubo integral for the thermal conductivity.

Further, in our model, the sound velocity, namely the first derivative of the dispersion relation at wave number $0$, vanishes. In \cite{KO}, the relation between the vanishing sound speed and the divergence of the thermal conductivity is discussed. In particular, the authors conjectured that the non-vanishing speed of sound is a necessary condition for the superdiffusion of thermal energy. However, our result provides a counterexample to this conjecture. 

Our strategy for the proof is primarily the same as in \cite{BBO}, where an explicit solution of a resolvent equation is crucial. With this solution, we calculated the Laplace transform of the correlation function of the energy current explicitly. In the study of its inverse Laplace transform, numerous computations with careful asymptotic analysis are required. Here, we remark that the correlation function has a strong oscillation and it does not have a monotone decay as $s^{-3/4}$.

In Section \ref{canosec}, we develop a technique to calculate the Green--Kubo integral with the canonical measure in lattice dimension $1$.
To introduce the canonical measure, the coordinate of the system must be changed from the pair of position and velocity to the pair of deformation (the difference in position between two neighboring oscillators) and velocity. We use this new coordinate
for uniformly and alternately charged oscillators in dimension $1$. Here, the alternate charge 
means that the sign of charges is alternating, while the magnitude is the same for all oscillators.
 In addition, we apply this change of coordinates
to a system of uncharged oscillators (which is equivalent to the original harmonic momentum exchange model), because the canonical measures are not properly defined in \cite{BBO}. For the canonical Green--Kubo integral, we again find that the uniformly charged system asymptotically shows $t^{1/4}$. However, uncharged and alternately charged systems show $t^{1/2}$. This reveals that the exponent depends not only on a non-zero magnetic field, which breaks momentum conservation, but also on the charge structure of the oscillators. Heuristically the average effect from the magnetic field in the alternate charge model can be regarded as $0$. However, we do not have any rigorous argument for the reason why uncharged and alternately charged systems show the same exponent thus far.

Concerning the proof, unlike the microcanonical case, we cannot solve the resolvent equation directly because of the change of coordinates. Several nice symmetries exist for position coordinates, which are essential to solve the resolvent equation explicitly. 
Hence, we introduce a new technique by using a solution in the original position coordinates. 
On the other hand, as the canonical measures are product measures in the new coordinates of deformations,
the expectation of observables are easy to compute and the equivalence of ensembles is not invoked. 

Once the divergence of the thermal conductivity is shown, it is natural to inquire about the exact nature of the superdiffusion, or more precisely about the macroscopic evolution equation of energy. We are currently working on this problem using the kinetic approach and expect a corresponding fractional diffusion equation.

\smallskip

\subsection{Model and main result}

We now introduce our model. We consider a $d$-dimensional chain of oscillators with periodic boundary conditions of length $N$ moving in the $d^*$-dimensional space. The oscillators are labeled by $\mathbf{x} \in \Z^d_N:=\Z^d/ N\Z^d$. We emphasize that $d$ is not necessarily equal to $d^*$. In particular, as we aim to study the system in a magnetic field, we always assume $d^* \ge 2$, but $d \ge 1$ is arbitrary. The velocity of the oscillator $\mathbf{x}$ is denoted by $\mathbf{v}_{\mathbf{x}} \in \R^{d^*}$ and the displacement from its equilibrium position is denoted by $\mathbf{q}_{\mathbf{x}} \in \R^{d^*}$. 

We denote by $(\mathbf{e}_1,\mathbf{e}_2,\dots,\mathbf{e}_n)$ the canonical basis of $\R^n$ and the coordinates of a vector $\mathbf{u} \in \R^n$ in this basis by $(u^1,\dots,u^n)$. Its Euclidean norm $|\mathbf{u}|$ is defined by $|\mathbf{u}|=\sqrt{(u^1)^2+\cdots+(u^n)^2}$ and the scalar product of $\mathbf{u}$ and $\mathbf{v}$ is denoted by $\mathbf{u} \cdot \mathbf{v}$. When it is clear from the context, we will not explicitly mention whether a vector is in $\R^d$ or $\R^{d^*}$. Instead, for better readability, we use the alphabets $a,b$ for indexes in $\{1,2,\dots,d\}$ and $j,k,\ell,m$ for indexes in $\{1,2,\dots,d^*\}$. For example, $\mathbf{x}=(x^{a})_{a}$ for $\mathbf{x} \in \Z^d_N$ and $ \mathbf{v}_{\mathbf{x}} =(v_{\mathbf{x}}^j)_{j}$ for $\mathbf{v}_{\mathbf{x}} \in \R^{d^*}$. 

For two functions $f(t)$ and $g(t)$ defined on $[0,\infty)$, we denote by $f(t) \sim g(t) ( t \to \infty)$ if there exists a constant $C>0$ such that for large enough $t$, $\frac{1}{C}g(t) \le f(t) \le Cg(t)$.

For $F:\Z^d_N \to \R$ or $F:\Z^d \to \R$, we introduce the discrete gradient of $F$ in the direction $\mathbf{e}_{a}$ defined by 
\[(\nabla_{\mathbf{e}_{a}} F)(\mathbf{x})=F(\mathbf{x}+\mathbf{e}_{a})-F(\mathbf{x})\]
and the discrete Laplacian of $F$ defined by 
\[(\Delta F) (\mathbf{x})=\sum_{|\mathbf{y}-\mathbf{x}|=1}(F(\mathbf{y})-F(\mathbf{x}))=\sum_{a=1}^d\{ F(\mathbf{x}+\mathbf{e}_{a})+F(\mathbf{x}-\mathbf{e}_{a})-2F(\mathbf{x})\}.\]

The dynamics of the chain of oscillators in a magnetic field is given by the following differential equations
\begin{align}\label{eq:dynamics} 
\begin{cases}
\frac{d}{dt} \mathbf{q}_{\mathbf{x}}  & =\mathbf{v}_{\mathbf{x}} \\
\frac{d}{dt} \mathbf{v}_{\mathbf{x}} & =[\Delta \mathbf{q}]_{\mathbf{x}} -B\sigma  \mathbf{v}_{\mathbf{x}}
\end{cases}
\end{align}
for $\mathbf{x} \in \Z_N^d$ where $\sigma=(\sigma_{j,k})$ is $d^* \times d^*$ matrix with entries $\sigma_{1,2}=-1, \sigma_{2,1}=1$, $\sigma_{j,k}=0$ for $(j,k) \neq (1,2),(2,1)$ and $B \in \R$ is the signed strength of the magnetic field. In this model, the oscillators are uniformly charged. For simplicity, we only consider the magnetic field orthogonal to the $(1,2)$ plain herein, but we can replace the matrix $B\sigma$ to an arbitrary real skew-symmetric matrix (see Remark \ref{remskew}).

The dynamics (\ref{eq:dynamics}) is also written in the following way:
\begin{align}\label{eq:dynamics2} 
\begin{cases}
\frac{d}{dt} q_{\mathbf{x}}^j & =v_{\mathbf{x}}^j \\
\frac{d}{dt} v_{\mathbf{x}}^j & =[\Delta q^j]_{\mathbf{x}} +\delta_{j,1}Bv_{\mathbf{x}}^2-\delta_{j,2}B v_{\mathbf{x}}^1
\end{cases}
\end{align}
for $j=1,\dots, d^*$ and $\mathbf{x} \in \Z_N^d$. 

\begin{rem}\label{rem:hamiltonian}
If $B=0$, the dynamics (\ref{eq:dynamics}) is the Hamiltonian system with the following Hamiltonian
\begin{displaymath}
H_N^0=\sum_{\mathbf{x} \in \Z^d_N} \left( \frac{|\mathbf{v}_{\mathbf{x}}|^2}{2}+\sum_{|\mathbf{y}-\mathbf{x}|=1}\frac{|\mathbf{q}_{\mathbf{y}}-\mathbf{q}_{\mathbf{x}}|^2}{4} \right).
\end{displaymath}
For $B \neq 0$, the dynamics is also a Hamiltonian system with a canonical momentum 
\[
\mathbf{p}_{\mathbf{x}}:=\mathbf{v}_{\mathbf{x}}+\frac{1}{2}B\sigma\mathbf{q}_{\mathbf{x}}
\]
and the Hamiltonian 
\begin{displaymath}
H_N^B=\sum_{\mathbf{x} \in \Z^d_N} \left( \frac{|\mathbf{p}_{\mathbf{x}}-\frac{1}{2}B\sigma\mathbf{q}_{\mathbf{x}}
|^2}{2}+\sum_{|\mathbf{y}-\mathbf{x}|=1}\frac{|\mathbf{q}_{\mathbf{y}}-\mathbf{q}_{\mathbf{x}}|^2}{4}\right).
\end{displaymath}
However, we do not use the canonical momentum in this paper. 
\end{rem}

It is obvious that the dynamics (\ref{eq:dynamics}) conserves the total energy $\sum_{\mathbf{x} \in \Z^d_N}\E_{\mathbf{x}}$, where we define the energy of the oscillator $\mathbf{x}$ by
\[
\E_{\mathbf{x}}=\frac{|\mathbf{v}_{\mathbf{x}}|^2}{2}+\sum_{|\mathbf{y}-\mathbf{x}|=1}\frac{|\mathbf{q}_{\mathbf{y}}-\mathbf{q}_{\mathbf{x}}|^2}{4}.
\]
Whereas the total velocity (or canonical momentum) $\sum_{\mathbf{x} \in \Z^d_N}\mathbf{v}_{\mathbf{x}}$ is also conserved if $B=0$, this is not the case if $B \neq 0$. Instead, the sum of the pseudomomentum $\tilde{\mathbf{p}}_{\mathbf{x}}$ 
\[
\tilde{\mathbf{p}}_{\mathbf{x}}:=\mathbf{v}_{\mathbf{x}}+B\sigma\mathbf{q}_{\mathbf{x}}
\]
is conserved, which is generally the case for Hamiltonian systems in a magnetic field \cite{JHY}. In particular, if the dynamics starts from the initial value satisfying $\sum_{\mathbf{x} \in \Z^d_N}\mathbf{q}_{\mathbf{x}}=\sum_{\mathbf{x} \in \Z^d_N}\mathbf{v}_{\mathbf{x}}=\mathbf{0}$, then for any time $t$, 
\begin{align*}
\frac{d}{dt} \sum_{\mathbf{x} \in \Z^d_N}\mathbf{q}_{\mathbf{x}} = \sum_{\mathbf{x} \in \Z^d_N}\mathbf{v}_{\mathbf{x}}= -B\sigma\sum_{\mathbf{x} \in \Z^d_N} \mathbf{q}_{\mathbf{x}},
\end{align*}
since $\sum_{\mathbf{x} \in \Z^d_N}\tilde{\mathbf{p}}_{\mathbf{x}}=\mathbf{0}$. Hence, $\sum_{\mathbf{x} \in \Z^d_N}\mathbf{q}_{\mathbf{x}}=\sum_{\mathbf{x} \in \Z^d_N}\mathbf{v}_{\mathbf{x}}=0$ for any time $t$.
This observation shows that the following microcanonical state space
\begin{displaymath}
\Omega_{N,E}:=
\{(\mathbf{q}_{\mathbf{x}},\mathbf{v}_{\mathbf{x}})_{ \mathbf{x} \in \Z^d_N}; \sum_{\mathbf{x}}\mathbf{q}_{\mathbf{x}}=\sum_{\mathbf{x}}\mathbf{v}_{\mathbf{x}}=0, \sum_{\mathbf{x}}\E_{\mathbf{x}}=N^d E \}
\end{displaymath}
is conserved by the dynamics for each fixed averaged energy $E>0$. Note that the microcanonical state space does not depend on $B$. We present some discussions on the microcanonical state space for general initial configurations in Subsection \ref{measec1}.

Now, we add a stochastic perturbation to this deterministic model. We introduce the velocity exchange noise, i.e., each pair of nearest neighbor oscillators exchanges the $j$-th component of their velocities at random exponential times with intensity $\gamma >0$ for each $j \in \{1,2,\dots,d^*\}$. $\gamma$ represents the strength of the noise. This stochastic perturbation is given by the operator $\ga S$ acting on functions $f : \R^{2d^*N} \to \R$ as
\begin{align}\label{Sop}
Sf = & \sum_{\mathbf{x} \in \Z^d_N}\sum_{a=1}^d\sum_{j=1}^{d^*}(f(\mathbf{q},\mathbf{v}^{j,\mathbf{x},\mathbf{x}+\mathbf{e}_a})-f(\mathbf{q},\mathbf{v})). 
\end{align}
Here, $\mathbf{v}^{j,\mathbf{x},\mathbf{y}}$ is obtained from $\mathbf{v}$ by exchanging the variables $v^j_{\mathbf{x}}$ and $v^j_{\mathbf{y}}$. 

\begin{rem}
This specific choice of noise is not important. Our proof is also applicable for the continuous noise used in \cite{BBO}, and yields the same divergence exponent of the thermal conductivity. 
\end{rem}

The whole dynamics is generated by the operator $L:=A+BG+\gamma S$ with the operators $A$ and $G$ acting on functions $f \in C^1(\R^{2d^*N^d})$ as 
\begin{align}\label{AGop}
A  & = \sum_{\mathbf{x} \in \Z^d_N} \{\mathbf{v}_{\mathbf{x}} \cdot \partial_{\mathbf{q}_{\mathbf{x}}}+ [\Delta \mathbf{q}]_{\mathbf{x}} \cdot \partial_{ \mathbf{v}_{\mathbf{x}}} \} =\sum_{\mathbf{x} \in \Z^d_N}\sum_{j=1}^{d^*}\big( v_{\mathbf{x}}^j  \partial_{q_{\mathbf{x}}^j} + [\Delta q^j]_{\mathbf{x}} \partial_{v_{\mathbf{x}}^j}\big), \\
G  & = \sum_{\mathbf{x} \in \Z^d_N} \mathbf{v}_{\mathbf{x}} \cdot \sigma \partial_{\mathbf{v}_{\mathbf{x}}}  = \sum_{\mathbf{x} \in \Z^d_N} \big( v_{\mathbf{x}}^2  \partial_{v_{\mathbf{x}}^1} -  v_{\mathbf{x}}^1  \partial_{v_{\mathbf{x}}^2} \big).  \nonumber 
\end{align}
Because $\sum_{\mathbf{x} \in \Z^d_N}\mathbf{q}_{\mathbf{x}}$, $\sum_{\mathbf{x} \in \Z^d_N}\mathbf{v}_{\mathbf{x}}$ and $\sum_{\mathbf{x} \in \Z^d_N}\E_{\mathbf{x}}$ are conserved by the operator $S$, the microcanonical state space $\Omega_{N,E}$ is also conserved by the whole dynamics. Moreover, the uniform probability measure on the space $\Omega_{N,E}$ is stationary for the dynamics, and $A$ and $G$ are antisymmetric and $S$ is symmetric with respect to this measure. We denote this microcanonical measure by $\mu_{N,E}$ and the expectation with respect to it by $E_{N,E}[ \cdot ]$.

We aim to obtain the asymptotic behavior of the microcanonical Green--Kubo integral defined by taking the infinite size limit in the correlation of the integrated total current $\kappa_{N,E}^{a,b}(t)$;
\begin{align}\label{microthermal}
\kappa_{N,E}^{a,b}(t)=\frac{1}{2 N^dE^2 t} \mathbb{E}_{N,E} [ \big( \sum_{\mathbf{x} \in \Z^d_N} J_{\mathbf{x},\mathbf{x}+\mathbf{e}_a}([0,t]) \big)\big( \sum_{\mathbf{x} \in \Z^d_N} J_{\mathbf{x},\mathbf{x}+\mathbf{e}_b}([0,t]) \big) ]
\end{align}
for $a,b \in \{1,2,\dots,d\}$ where $\mathbb{E}_{N,E}$ is the expectation for the dynamics starting from the microcanonical measure $\mu_{N,E}$ and $J_{\mathbf{x},\mathbf{x}+\mathbf{e}_a}([0,t])$ is the total energy current from $\mathbf{x}$ to $\mathbf{x}+\mathbf{e}_a$ up to time $t$. By symmetry, it is easy to see that $\kappa_{N,E}^{a,b}(t)=\delta_{a,b}\kappa_{N,E}^{1,1}(t)$ (see Lemma \ref{symm}).

Our main result for the microcanonical Green--Kubo integral is the following: 
\begin{thm}\label{mr1}
For any $d \ge 1$, $d^* \ge 2$ and $t \ge 0$, $\kappa^{1,1}_{E}(t):=\lim_{N \to \infty}\kappa_{N,E}^{1,1}(t)$ exists. Moreover,
if $B \neq 0$,
\begin{align}
\begin{cases}\label{microasym}
\kappa^{1,1}_{E}(t) \sim t^{\frac{1}{4}} \  (t \to \infty) \quad  & \ \text{if} \ d=1 \ \text{and} \ d^*=2, \\ 
\kappa^{1,1}_{E}(t) \sim t^{\frac{1}{2}} \  (t \to \infty) \quad  & \  \text{if} \ d=1 \ \text{and} \ d^*\ge 3, \\ 
\kappa^{1,1}_{E}(t) \sim  \log t \  (t \to \infty) \quad  & \ \text{if} \ d=2 \ \text{and} \ d^*\ge 2,\\
\displaystyle \limsup_{t \to \infty} \kappa^{1,1}_{E}(t)  < \infty \quad  & \  \text{if} \ d \ge 3 \ \text{and} \ d^*\ge 2.
\end{cases}
\end{align}
\end{thm}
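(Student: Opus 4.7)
The strategy follows the resolvent method of \cite{BBO}, modified to account for the cyclotron coupling and arbitrary dimensions $d,d^*$. First, I would decompose the integrated current as $\sum_{\mathbf{x}}J_{\mathbf{x},\mathbf{x}+\mathbf{e}_1}([0,t])=\int_0^t \bar{J}^1(s)\,ds+M^1(t)$, with $\bar{J}^1$ the instantaneous total current (a translation-invariant quadratic form in $(\mathbf{q},\mathbf{v})$) and $M^1$ a noise martingale whose quadratic variation grows only linearly in $t$ and therefore contributes only a bounded term to $\kappa_{N,E}^{1,1}(t)$. The problem then reduces to controlling, in the thermodynamic limit, the Laplace transform
\[
\hat C_N(z) = \frac{1}{N^d}\langle \bar{J}^1,(z-L)^{-1}\bar{J}^1\rangle_{\mu_{N,E}}
\]
of the stationary current--current correlation $C_N(\tau)$, that is, the inner product of $\bar{J}^1$ with the solution $u$ of the resolvent equation $(z-L)u=\bar{J}^1$.

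Since $L=A+BG+\gamma S$ preserves the total degree of polynomials in $(\mathbf{q},\mathbf{v})$, $u$ lies in the finite-dimensional space of translation-invariant quadratic forms, and the Fourier transform (indexed by $\mathbf{k}\in\frac{2\pi}{N}\Z^d_N$) block-diagonalizes the resolvent equation into independent linear systems of size $O(d^{*2})$. The essential new feature compared with \cite{BBO} is that the free matrix mixing positions and velocities now contains the cyclotron block $-B\sigma$: for each $\mathbf{k}$ and for the components in the $(1,2)$-plane the dispersion splits into a slow branch $\omega_+(\mathbf{k})\sim|\mathbf{k}|^2/|B|$ and a fast branch near $-B$, while the remaining components $j\geq 3$ (when $d^*\geq 3$) retain the acoustic dispersion $\omega_0(\mathbf{k})\sim|\mathbf{k}|$ of \cite{BBO}. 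The velocity-exchange noise contributes a dissipation rate $\sim\gamma|\mathbf{k}|^2$ on second moments. Solving each $\mathbf{k}$-block by Cramer's rule and sending $N\to\infty$ yields $\hat C_\infty(z)=(2\pi)^{-d}\int_{[-\pi,\pi]^d}F(z,\mathbf{k})\,d^d\mathbf{k}$ with $F$ an explicit rational function of $z$ and the Fourier symbols. Near $\mathbf{k}=0$ the current weight is $|J(\mathbf{k})|^2\sim|\mathbf{k}|^2$ (from the gradient factor in the energy current), so after the rescaling $\mathbf{k}=z^{1/4}\mathbf{u}$ the singular part of the integral coming from the magnetic-affected modes behaves like $z^{-1/4}$ in $d=1$, like $\log(1/z)$ in $d=2$, and is bounded in $d\geq 3$. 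For the decoupled modes (present only when $d^*\geq 3$) the analogous BBO-type integral is $z^{-1/2}$, $\log(1/z)$, and bounded, respectively. Taking the maximum of the two contributions in each case of $(d,d^*)$ reproduces exactly the four regimes of Theorem \ref{mr1}.

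The last step, and the principal obstacle, is converting the small-$z$ Laplace asymptotics into the $t\to\infty$ behavior of $\kappa^{1,1}_E(t)$ itself. Because $C(\tau)$ is strongly oscillatory (as emphasized in the introduction, it does not decay monotonely as $\tau^{-3/4}$), a Karamata Tauberian theorem cannot be applied directly. I would derive the upper bound from stationary-phase analysis of the explicit spectral representation of $C(\tau)$ (a sum over Fourier modes, each carrying exponential damping $\sim e^{-c\gamma|\mathbf{k}|^2\tau}$ and oscillation at frequency $\omega(\mathbf{k})$), combined with the Ces\`aro averaging built into $\kappa_{N,E}^{1,1}(t)$ which smooths the oscillations. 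The matching \emph{lower} bound is the main technical hurdle: it can be obtained either from positivity of the spectral measure of the symmetric part of the resolvent, or from a direct stationary-phase argument in which time averaging converts the oscillatory integrand into a strictly positive leading term of the predicted order. A secondary, purely algebraic obstacle is the Cramer's-rule inversion of the $(2d^*)\times(2d^*)$ resolvent block in the cyclotron-coupled case, which is heavier than in \cite{BBO} but remains tractable thanks to the skew-symmetry of $B\sigma$ and the rotational symmetry it induces in the $(1,2)$-plane.
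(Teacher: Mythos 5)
Your route coincides with the paper's through the Laplace-transform stage: same decomposition of the integrated current into a time integral of the instantaneous current plus a Poissonian martingale, same reduction to the resolvent equation $(\lambda-L)u=\sum_{\mathbf{x}}j^a_{\mathbf{x},\mathbf{x}+\mathbf{e}_1}$ solved within translation-invariant quadratic forms by Fourier block-diagonalization, and the same identification of the slow magnetic branch versus the BBO-type branches for $j\ge 3$, with the correct small-$\lambda$ singularities $\lambda^{-1/4}$, $\log(1/\lambda)$, $O(1)$. Two points, however, are left unexecuted, and they are precisely where the paper's work lies.

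First, the passage from Laplace asymptotics to the $t\to\infty$ behavior is not an open ``technical hurdle'' in the paper and needs no Tauberian theorem, stationary-phase argument, or separate positivity-of-spectral-measure lower bound: since for each fixed $\theta$ the integrand $P(\lambda)/Q(\lambda)$ is a rational function of low degree, the paper inverts the Laplace transform \emph{explicitly} by partial fractions, writing $C_\infty=C_1+C_2+C_3+(d^*-2)C_4$ with $C_3(t)=\int \frac{\sin^2(2\pi\theta^1)}{\omega_\theta^2}\beta_2(\theta)e^{-(\gamma\omega_\theta^2-\alpha_2(\theta))t}d\theta$, where $\beta_2(\theta)\approx \frac{2}{B^2}|\theta|^2>0$ and $\gamma\omega_\theta^2-\alpha_2(\theta)\approx \frac{2\gamma\omega_\theta^4}{B^2}$ near $\theta=0$. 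This term is manifestly positive and gives $C_3(t)\sim t^{-d/4-1/2}$, so upper and lower bounds come for free; the only oscillatory piece, $C_1$, is subdominant in the Ces\`aro sense because $\int_0^T(1-\frac{t}{T})e^{-\gamma\omega_\theta^2 t}\cos(\alpha_1(\theta)t)\,dt$ can be computed in closed form and bounded uniformly in $T$ after integrating in $\theta$. Your sketch correctly diagnoses the oscillation problem and guesses that time averaging plus a positive slow mode resolves it, but without the partial-fraction decomposition (or an equivalent explicit inversion) the decisive lower bound for the exponent $1/4$ is not established. Second, the first assertion of the theorem, the existence of $\lim_{N\to\infty}\kappa^{1,1}_{N,E}(t)$ for each fixed $t$, does not follow from convergence of the Laplace transforms alone; the paper proves uniform boundedness and equicontinuity of $\{C_N\}$ (using the equivalence of ensembles to control $E_{N,E}[(\sum_{\mathbf{x}}j^a_{\mathbf{x},\mathbf{x}+\mathbf{e}_1})^2]$ and $E_{N,E}[W^2]$), then combines Arzel\`a--Ascoli with injectivity of the Laplace transform to get compact convergence of $C_N$ to $C_\infty$. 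Your proposal omits this step entirely; as written it would only control subsequential limits in the Laplace variable. Minor additional remarks: the paper kills the martingale--current cross term exactly by a time-reversal argument rather than by a crude bound, and the martingale's quadratic variation produces the explicit constant $\frac{\gamma}{2d^*}$ via the equivalence of ensembles, but these are refinements rather than gaps in your outline.
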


\begin{rem}
Our proof also applies to the case $B=0$ and for that case the asymptotic behavior of $\kappa^{1,1}_{E}(t)$ is
\begin{align}
\begin{cases}\label{microasym}
\kappa^{1,1}_{E}(t) \sim t^{\frac{1}{2}} \  (t \to \infty) \quad  & \ \text{if} \ d=1,  \\ 
\kappa^{1,1}_{E}(t) \sim  \log t \  (t \to \infty) \quad &  \ \text{if} \ d=2, \\
\displaystyle \limsup_{t \to \infty} \kappa^{1,1}_{E}(t)  < \infty \quad &  \ \text{if} \ d\ge3
\end{cases}
\end{align}
for any $d^* \ge 1$. Note that $d^*=1$ is also allowed at this remark because $B=0$. This asymptotic behavior has already been shown in Theorem 1 and 2 of \cite{BBO} under the condition $d=d^*$. 
\end{rem}

\begin{rem}
The thermal conductivity via the microcanocial Green--Kubo integral in the direction $\mathbf{e}_1$ is defined as the limit (when it exists)
\begin{align*}
\kappa^{1,1}_{E}=\lim_{t \to \infty} \lim_{N \to \infty} \frac{1}{2 E^2 t N^d} \mathbb{E}_{N,E} [(\sum_{\mathbf{x} \in \Z^d_N} J_{\mathbf{x},\mathbf{x}+\mathbf{e}_1}([0,t]))^2].
\end{align*} 
Theorem \ref{mr1} shows that the Green-Kubo integral
\begin{align*}
\kappa^{1,1}_{E}(t) & =  \lim_{N \to \infty} \frac{1}{2 E^2 t N^d} \mathbb{E}_{N,E} [(\sum_{\mathbf{x} \in \Z^d_N} J_{\mathbf{x},\mathbf{x}+\mathbf{e}_1}([0,t]))^2]
\end{align*}
exists for any $t$, but $\kappa^{1,1}_{E}=\lim_{t \to \infty}\kappa^{1,1}_{E}(t)=\infty$ for $d=1,2$.
\end{rem}

\begin{rem}
In this model, the dispersion relations are $\tilde{\omega}^{\pm}(\theta)=\sqrt{\omega_{\theta}^2+(\frac{B}{2})^2}\pm \frac{B}{2}$ for $d^*=2$, and $\tilde{\omega}^{\pm}(\theta)$ and $\omega_{\theta}$ for $d^*\ge 3$ where $\omega_{\theta}=\sqrt{4\sum_{a=1}^d\sin^2(\pi \theta^a)}$ is the dispersion relation of the dynamics with zero magnetic field, $B=0$. In particular, $\partial_{\theta^1}\tilde{\omega}^{\pm}(\theta)=\frac{4\pi\sin(\pi \theta^1)\cos(\pi \theta^1)}{\sqrt{\omega_{\theta}^2+(\frac{B}{2})^2}}$ and the sound velocity of these modes vanish: $\lim_{\theta \to 0}|\partial_{\theta^1}^{\pm}\tilde{\omega}(\theta)|=0$. 
\end{rem}

\smallskip

Next, we consider the canonical measure, which is defined in \cite{BBO} by 
\[
\mu_{N,\beta}(dqdv)=\frac{1}{Z}\exp(-\b \sum_{\mathbf{x} \in \Z^d_N}\E_{\mathbf{x}})\Pi_{{\mathbf{x}} \in \Z^d_N}\Pi_{j=1}^{d^*}dq^j_{\mathbf{x}}dv^j_{\mathbf{x}}.
\]
However, the partition function $Z$ in the expression above is infinite since the total energy $\sum_{\mathbf{x} \in \Z^d_N}\E_{\mathbf{x}}$ is invariant under the translation $(\mathbf{q}_{\mathbf{x}})_{\mathbf{x} \in \Z^d_N} \to (\mathbf{q}_{\mathbf{x}}+\mathbf{c})_{\mathbf{x} \in \Z^d_N}$ for any $\mathbf{c} \in \R^{d^*}$. Hence, to handle the canonical measures, we redefine the dynamics with different coordinates and state spaces, which is physically identical to (\ref{eq:dynamics}) in the bulk, but the boundary conditions are different. Since state spaces and canonical measures in the new coordinates become complicated for $d \ge 2$, we study only the case $d=1$. Moreover, for notational simplicity, we restrict ourselves to the case $d^*=2$. In this setting, we study three versions of the dynamics, namely $(0)$ zero magnetic field, $B=0$, (i) uniform charges, and (ii) alternate charges. 

We now introduce the dynamics in new coordinates. Suppose $d=1$ and $d^*=2$. The oscillators are labeled by $x \in \Z_N:=\Z/ N\Z$. We use $x$ rather than $\mathbf{x}$ to emphasize that $x$ is not a vector. The velocity of the oscillator $x$ is denoted by $\mathbf{v}_x \in \R^{2}$ as before, and instead of $\mathbf{q}_{x}$, we introduce $\mathbf{r}_{x} \in \R^2$ which represents the difference in the displacement between the oscillators $x$ and $x+1$. We can interpret $\mathbf{r}_{x}=\mathbf{q}_{x+1}-\mathbf{q}_x$ but $\mathbf{q}_x$ is no longer periodic. In other words, we do not assume $\sum_{x \in \Z_N}\mathbf{r}_{x}=\mathbf{0}$. 

The equations of motion for the three cases are
\begin{align*} (0) 
\begin{cases}
\frac{d}{dt} r_x^j & =v_{x+1}^j -v_x^j,\\
\frac{d}{dt} v_x^j & =r_{x}^j -r_{x-1}^j,
\end{cases}
\end{align*}
\begin{align*} (i) 
\begin{cases}
\frac{d}{dt} r_x^j & =v_{x+1}^j -v_x^j,\\
\frac{d}{dt} v_x^j & =r_{x}^j -r_{x-1}^j +\delta_{j,1}Bv_x^2-\delta_{j,2}B v_x^1,
\end{cases}
\end{align*}
and 
\begin{align*} (ii) 
\begin{cases}
\frac{d}{dt} r_x^j & =v_{x+1}^j -v_x^j,\\
\frac{d}{dt} v_x^j & =r_{x}^j -r_{x-1}^j + (-1)^x (\delta_{j,1}Bv_x^2-\delta_{j,2}B v_x^1)
\end{cases}
\end{align*}
for $j=1,2$ and $x \in \Z_N$. For dynamics (ii), we assume that $N$ is even. Note that dynamics (i) is obtained from the dynamics (\ref{eq:dynamics2}) by changing the coordinates formally as $\mathbf{r}_{x}=\mathbf{q}_{x+1}-\mathbf{q}_x$. 

As before obvious by the dynamics (0), (i) and (ii) conserve the total energy $\sum_{x \in \Z_N}\E_x$, where we define the energy of the oscillator $x$ by
\[
\E_x=\frac{|\mathbf{v}_x|^2}{2}+\frac{|\mathbf{r}_x|^2}{4}+\frac{|\mathbf{r}_{x-1}|^2}{4}.
\]

We now consider the stochastic perturbation of these dynamics. The operator of the perturbation is given by the operator $\ga S_r$ acting on functions $f : \R^{4N} \to \R$ as
\begin{align}\label{Sopc}
S_rf = & \sum_{x \in \Z_N}\sum_{j=1}^{2}(f(\mathbf{r},\mathbf{v}^{j,x,x+1})-f(\mathbf{r},\mathbf{v})) 
\end{align}
whose physical interpretation is the same as before. The generator of the whole dynamics is $L^{\#}_r:=A_r+BG^{(\#)}_r+\gamma S_r$ with the operators $A_r$ and $G^{(\#)}_r$ for $\#=0,i,ii$ acting on functions $f \in C^1(\R^{4N})$ as 
\begin{align}\label{AGopc}
A_r = & \sum_{x \in \Z_N}\sum_{j=1}^{2}\big( (v_{x+1}^j -v_x^j) \partial_{r_x^j} + (r_{x}^j -r_{x-1}^j) \partial_{v_x^j}\big), \nonumber \\
G^{(0)}_r  = & 0,\nonumber \\
G^{(i)}_r = & \sum_{x \in \Z_N} \big( v_x^2  \partial_{v_x^1} -  v_x^1  \partial_{v_x^2} \big), \\
G^{(ii)}_r = & \sum_{x \in \Z_N} (-1)^x\big( v_x^2  \partial_{v_x^1} -  v_x^1  \partial_{v_x^2} \big). \nonumber
\end{align}

For each $\beta>0$, the canonical measure $\mu_{N,\beta}$ on $\R^{4N}$ is defined by
\begin{align*}
\mu_{N,\b}(dr dp) & =\frac{1}{Z_{\b}^{4N}}\exp(-\beta \sum_{x} \E_x) \Pi_{x \in \Z_N} \Pi_{j=1}^2 dr_x^j dv_x^j \\
& =\frac{1}{Z_{\b}^{4N}}\exp(-\frac{\beta}{2} \sum_{x}\sum_{j=1}^2((v_x^j)^2+(r_x^j)^2) ) \Pi_{x \in \Z_N}\Pi_{j=1}^2 dr_x^j dv_x^j
\end{align*}
where $Z_{\b}=\sqrt{\frac{2 \pi}{\b}}$. Dynamics (0), (i) and (ii) are all stationary under $\mu_{N,\beta}$ since $A_r$ and $G^{(\#)}_r$ for $\#=i,ii$ are antisymmetric and $S_r$ is symmetric with respect to this measure.

We consider the canonical Green--Kubo integral defined by taking the infinite size limit in the correlation of the integrated total current $\kappa^{(\#)}_{N,\b}(t)$;
\begin{align*}
\kappa^{(\#)}_{N,\b}(t)=\frac{\b^2}{8Nt}\mathbb{E}_{N,\b}^{(\#)} [ \big(\sum_{\mathbf{x} \in \Z_N}  J_{x,x+1}([0,t]) \big)^2]
\end{align*}
where $\mathbb{E}_{N,\b}^{(\#)}$ is the expectation for the dynamics generated by $L^{(\#)}$ starting from the canonical measure $\mu_{N,\b}$ and $J_{x,x+1}([0,t])$ is the total energy current from $x$ to $x+1$ up to time $t$. 

Our main result for the canonical Green--Kubo integral is the following:
\begin{thm}\label{mr2}
For any $t \ge 0$ and $\#=0,i, ii$, $\kappa^{(\#)}_{\b}(t):=\lim_{N\to \infty}\kappa^{(\#)}_{N,\b}(t)$ exists. Moreover,
\begin{align*}
\begin{cases}
\kappa^{(0)}_{\b}(t) \sim t^{\frac{1}{2}} \  (t \to \infty), \\ 
\kappa^{(i)}_{\b}(t) \sim t^{\frac{1}{4}} \  (t \to \infty), \\ 
\kappa^{(ii)}_{\b}(t) \sim t^{\frac{1}{2}} \  (t \to \infty) \quad \text{if} \quad \ga \le 1.
\end{cases}
\end{align*}
\end{thm}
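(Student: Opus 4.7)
The plan is to reduce $\kappa^{(\#)}_{N,\b}(t)$ to a resolvent computation via Laplace transform, exploit the product Gaussian structure of $\mu_{N,\b}$ in the deformation coordinates to evaluate expectations directly, and then extract the exponent from the $\la\to 0$ behavior by a Tauberian argument. Writing $j^{(\#)}_{x,x+1}$ for the instantaneous energy current associated with $L^{(\#)}_r$, a standard identity gives
\begin{align*}
\int_0^{\infty} e^{-\la t}\, t\, \kappa^{(\#)}_{N,\b}(t)\, dt = \frac{\b^2}{4N\la^2}\sum_{x,y \in \Z_N} \mathbb{E}_{N,\b}^{(\#)}\!\left[ j^{(\#)}_{x,x+1}\,(\la - L^{(\#)}_r)^{-1} j^{(\#)}_{y,y+1}\right].
\end{align*}
Because $j^{(\#)}_{x,x+1}$ is quadratic in $(\mathbf{r},\mathbf{v})$ and $\mu_{N,\b}$ is a standard Gaussian product measure in these variables, the resolvent equation $(\la-L^{(\#)}_r)u = \sum_x j^{(\#)}_{x,x+1}$ can be solved inside the finite-dimensional space of centered quadratic forms, and no equivalence-of-ensembles step is needed (this is the main simplification compared with the microcanonical analysis of Section \ref{microsec}).

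For case (0) with $B=0$, the argument reduces to the harmonic momentum-exchange chain of \cite{BBO} transcribed to stretch--velocity coordinates: Fourier transforming in $x$ diagonalizes $A_r$, the noise $S_r$ has a fixed rank structure in Fourier space, and one extracts a $\la^{-3/2}$ singularity, which by Tauberian inversion corresponds to $t^{1/2}$. For case (i), the deterministic generator $A_r+BG^{(i)}_r$ couples the $1$ and $2$ components of $\mathbf{v}$; the key step is to introduce the complex fields $\psi_x^{\pm}=v_x^1\pm i v_x^2$ (and the analogous combinations for $\mathbf{r}$), in which $G^{(i)}_r$ becomes multiplication by $\mp i$. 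Together with Fourier transformation in $x$, this decouples the Hamiltonian part into two scalar oscillators with dispersions $\tilde\omega^{\pm}(\theta)=\sqrt{\omega_\theta^2+(B/2)^2}\pm B/2$. The slow mode satisfies $\tilde\omega^{-}(\theta)\sim \omega_\theta^2/B$ near $\theta=0$ and has vanishing sound speed, which produces an additional factor of $|\theta|$ in the small-$\theta$ behavior of the resolvent; tracking this factor through the Tauberian argument gives $\la^{-5/4}$ in place of $\la^{-3/2}$ and hence the exponent $1/4$.

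For case (ii), the alternating factor $(-1)^x$ breaks translation invariance by $1$ but preserves invariance by $2$. The plan is to work on the sublattice $2\Z/N\Z$ with a doubled unit cell $(\mathbf{r}_{2x},\mathbf{v}_{2x},\mathbf{r}_{2x+1},\mathbf{v}_{2x+1})$ and Fourier transform on $\Z_{N/2}$; equivalently, $G^{(ii)}_r$ couples momentum $\theta$ to $\theta+\pi$, so the resolvent becomes a block matrix mixing these two sectors. A direct computation shows that the magnetic coupling enters in the sound sector only through its second moment, so the effective low-momentum dispersion retains a non-vanishing sound velocity and the analysis collapses to a perturbation of case (0), yielding $\la^{-3/2}$ and $t^{1/2}$. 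The condition $\ga \le 1$ is expected to enter at the stage where one must invert a $\theta$-dependent matrix combining the noise and the magnetic coupling: for $\ga$ small enough, this inversion can be controlled perturbatively in $\theta$ uniformly in the system size.

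The main obstacle, noted already in the introduction, is that the explicit resolvent inversion of \cite{BBO} relies on symmetries of the position coordinates that are destroyed by the passage to deformations. My strategy is to lift the resolvent equation back to the $(\mathbf{q},\mathbf{v})$ picture, where $A+BG^{(\#)}$ has the structure of Section \ref{microsec} and can be inverted explicitly, and then to push the solution back through the discrete gradient $\mathbf{r}_x=\mathbf{q}_{x+1}-\mathbf{q}_x$, taking care that the currents $j^{(\#)}_{x,x+1}$ and their sum depend on $\mathbf{r}$ alone so that the lift is consistent. The most delicate implementation occurs in case (ii), where the alternating charges force a simultaneous handling of the sublattice folding and of the lift, and this is where the hypothesis $\ga\le 1$ should be needed to keep the auxiliary inversions uniformly bounded.
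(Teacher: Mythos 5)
Your overall strategy coincides with the paper's: pass to Laplace transforms, use the product structure of $\mu_{N,\b}$, and (the key idea) lift the resolvent equation from $(\mathbf{r},\mathbf{v})$ back to $(\mathbf{q},\mathbf{v})$, solve it there as in Section \ref{microsec}, and push the solution back through the discrete gradient. This is exactly Proposition \ref{reduction}. However, as stated your lift is not quite consistent: with $q^j_x=-\sum_{y\ge x}(r^j_y-\bar r^j)$ one has $q^j_{x+1}-q^j_x=r^j_x-\bar r^j$, so the pulled-back solution solves the resolvent equation with right-hand side $\sum_x j^a_{x,x+1}+N\sum_j\bar r^j\bar v^j$, and you must also verify the hypothesis $\sum_x\partial_{q^j_x}F=0$ for the explicit solutions. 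The paper handles the extra term by solving a second, explicit resolvent equation for $N\sum_j\bar r^j\bar v^j$ and showing its contribution to $E_{N,\b}[\,\cdot\,j^a_{0,1}]$ vanishes as $N\to\infty$ (it does not vanish for the tilted measures $\mu_{N,\b,\tau}$, so this step is not cosmetic). Also, since the theorem asserts existence of $\lim_N\kappa^{(\#)}_{N,\b}(t)$ for every fixed $t$, convergence of the Laplace transforms alone is not enough; one needs the uniform boundedness and equicontinuity of $D^{(\#)}_N$ (Lemma \ref{lem:uniformity2}) plus an Ascoli-type argument.

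Two further points are genuine gaps. First, the exponent cannot be extracted from the $\la\to0$ singularity ``by a Tauberian argument'': the limiting correlation in case (i) contains an oscillating component of amplitude $\sim s^{-1/2}$, larger than the monotone main term $\sim s^{-3/4}$, so the positivity/monotonicity hypotheses behind Tauberian inversion fail. The paper instead inverts the Laplace transform explicitly, decomposes $D^{(i)}_\infty=D_1+D_2+D_3$, and bounds $\int_0^T(1-\tfrac{s}{T})D_1(s)\,ds$ directly; some substitute for this step is indispensable. Second, your treatment of case (ii) — that the magnetic coupling enters the sound sector only through its second moment so the problem ``collapses to a perturbation of case (0)'' — is precisely the heuristic the paper states it cannot make rigorous. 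The actual proof computes $\tilde D^{(ii)}_\infty$ as $\int\cos^2(\pi\theta)R(\la)/S(\la)\,d\theta$ with $S$ of degree six, and the hypothesis $\ga\le1$ is used not to control a perturbative matrix inversion uniformly in $N$ (the finite-$N$ analysis needs no such condition), but in the inverse Laplace step: it guarantees that the auxiliary cubic $T(Y)$ has three distinct real negative roots, so the partial fraction decomposition has real poles and the $t^{-1/2}$ decay can be read off from $2\ga-\a_1(\theta)\sim c\,\theta^2$ near $\theta=0$. Without an argument at this level of explicitness, the claimed $t^{1/2}$ behavior for (ii), and the role of $\ga\le1$, remain unproved.
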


\begin{rem}
We conjecture that the assumption $\ga \le 1$ is not essential and the asymptotic behavior of $\kappa^{(ii)}_{\b}(t)$ does not depend on the parameter $\gamma$. This assumption is needed only in Subsection \ref{inverselaplace2}. We provide some discussions on it in Remark \ref{ga>1case}.
\end{rem}

\smallskip

The remainder of the article is organized as follows; In Section \ref{microsec}, we study the model with uniform charges under microcanonical measures in general dimensions, and establish Theorem \ref{mr1}. Section \ref{canosec} concerns the one-dimensional chain of uncharged oscillators and oscillators with uniform and alternate charges under canonical measures in the two-dimensional space, including the proof of Theorem \ref{mr2}.

\section{Uniform charge model under microcanonical measures}\label{microsec}

This section provides a detailed study of the uniform charge model in general dimensions in the coordinate $(\mathbf{q},\mathbf{v})$. We start by recalling the description of the model in Subsection \ref{modelsec1}. Subsection \ref{measec1} is for the analysis of conserved quantities and microcanonical states. In Subsection \ref{currentsec1}, we study the thermal conductivity in terms of the current-current correlation and give a proof of Theorem \ref{mr1} by assuming one key proposition. The proof of this proposition is divided into several steps and given in the last three subsections.

\subsection{Model}\label{modelsec1}

We consider the Markov process $(\mathbf{q}_{\mathbf{x}}(t), \mathbf{v}_{\mathbf{x}}(t))_{\mathbf{x} \in \Z^d_N}$ on $\R^{2d^*N^d}$ generated by the operator $L=A+BG+\gamma S$ where $A,G$ and $S$ are defined at (\ref{Sop}) and (\ref{AGop}). Note that $B \in \R$ and $\gamma >0$ are parameters that regulate the strength (and the direction) of the magnetic field and the stochastic perturbation respectively. 

\begin{rem}\label{remskew}
We can replace the matrix $B\sigma$ by any real skew-symmetric matrix $M$ with size $d^*$ at (\ref{eq:dynamics}) and consider the dynamics associated to this matrix, which conserves the total energy. Note that any real skew-symmetric matrix $M$ is given in the form $M=U\Sigma U^T$ where $U$ is orthogonal and $\Sigma$ is block diagonal with a form
\[
\Sigma=\left(
\begin{array}{ccccccc}
B_1 J & 0 & \cdots & \cdots &\cdots & \cdots &0 \\
0 & B_2 J &  0 & \cdots & \cdots & \cdots & 0 \\
\vdots & 0 & \cdots & 0 & \cdots & \cdots & 0  \\
\vdots & \vdots & 0 & B_m J  & \cdots & \cdots & 0 \\
\vdots & \vdots & \vdots & 0  & 0 & \cdots & 0 \\
\vdots  & \vdots  & \vdots  & \vdots  &  \vdots & \cdots & 0 \\
0  & 0 & 0 &0  &  0& \cdots & 0

\end{array}
\right), \qquad J= \left(
\begin{array}{cc}
0 & 1 \\
-1 & 0 
\end{array}
\right)
\]
for some $m \le \frac{d^*}{2}$ and real numbers $B_i \neq 0, i=1,2,\dots,m$. In the above expression of $\Sigma$, each $0$ correspond to $2 \times 2$ size zero matrix, but only when $d^*$ is odd, each $0$ in the last row is a scalar. Obviously, $\Sigma=B\sigma$ if $m=1$ and $B_1=B$. By changing the coordinates $(\mathbf{q}_{\mathbf{x}}, \mathbf{v}_{\mathbf{x}})$ to $(\tilde{\mathbf{q}}_{\mathbf{x}}=U\mathbf{q}_{\mathbf{x}}, \tilde{\mathbf{v}}_{\mathbf{x}}=U\mathbf{v}_{\mathbf{x}})$, the deterministic dynamics associated to the matrix $M$ is rewritten as 
\begin{align*}
\begin{cases}
\frac{d}{dt} \tilde{\mathbf{q}}_{\mathbf{x}}  & =\tilde{\mathbf{v}}_{\mathbf{x}} \\
\frac{d}{dt} \tilde{\mathbf{v}}_{\mathbf{x}} & =[\Delta \tilde{\mathbf{q}}]_{\mathbf{x}} -\Sigma \tilde{\mathbf{v}}_{\mathbf{x}}.
\end{cases}
\end{align*}
Then, Theorem \ref{mr1} is generalized to this dynamics with stochastic perturbation and the asymptotic behavior of the microcanonical Green-Kubo integral is given as
\begin{align*}
\begin{cases}
\kappa^{1,1}_{E}(t) \sim t^{\frac{1}{4}} \  (t \to \infty) \quad (d=1,d^*=2m) \\ 
\kappa^{1,1}_{E}(t) \sim t^{\frac{1}{2}} \  (t \to \infty) \quad (d=1,d^* \neq 2m) \\ 
\kappa^{1,1}_{E}(t) \sim  \log t \  (t \to \infty) \quad (d=2,d^* \ge 2)\\
\displaystyle \limsup_{t \to \infty} \kappa^{1,1}_{E}(t)  < \infty \quad \quad (d \ge 3,d^* \ge 2).
\end{cases}
\end{align*}
In particular, if $d^*$ is odd, then $\kappa^{1,1}_{E}(t) \sim t^{\frac{1}{2}}$ for $d=1$. The generalization from $B\sigma$ to $\Sigma$ is straightforward, so we omit the proof.
\end{rem}

\subsection{Conserved quantities and microcanonical states}\label{measec1}

As described in the Introduction, the dynamics (\ref{eq:dynamics}) conserves the total energy $\sum_{\mathbf{x} \in \Z^d_N}\E_{\mathbf{x}}$. In particular, $G \E_{\mathbf{x}}=0$. On the other hand, the dynamics does not conserved the total velocity $\sum_{\mathbf{x} \in \Z^d_N}\mathbf{v}_{\mathbf{x}}$ if $B \neq 0$ since $G \sum_{x \in \Z_N^d}\mathbf{v}_{\mathbf{x}} \neq 0$. 

We have already seen that if we start from the initial condition $\sum_{\mathbf{x}}\mathbf{q}_{\mathbf{x}}=\sum_{\mathbf{x}}\mathbf{v}_{\mathbf{x}}=0$, then the condition is conserved by the dynamics. We discuss what happens for general initial conditions here. Let $\bar{\mathbf{q}}=\frac{1}{N^d}\sum_{\mathbf{x}}\mathbf{q}_{\mathbf{x}}$ and $\bar{\mathbf{v}}=\frac{1}{N^d}\sum_{\mathbf{x}}\mathbf{v}_{\mathbf{x}}$. Then, it is easy to see that 
\begin{align}\label{eq:dynamicsQV} 
\begin{cases}
\frac{d}{dt} \bar{\mathbf{q}}  & =\bar{\mathbf{v}}\\
\frac{d}{dt} \bar{\mathbf{v}} & =-B\sigma  \bar{\mathbf{v}}
\end{cases}
\end{align}
and then $|\bar{\mathbf{v}}|^2$ is conserved. Moreover, the centered configuration $(\mathbf{Q}_{\mathbf{x}}:=\mathbf{q}_{\mathbf{x}}-\bar{\mathbf{q}}, \mathbf{V}_{\mathbf{x}}:=\mathbf{v}_{\mathbf{x}}-\bar{\mathbf{v}})_{\mathbf{x} \in \Z_N^d}$ also evolves according to the stochastic dynamics generated by $L$. In this way, the dynamics of $(\mathbf{q}_{\mathbf{x}},\mathbf{v}_{\mathbf{x}})$ is decomposed into the dynamics of $(\mathbf{Q}_{\mathbf{x}}, \mathbf{V}_{\mathbf{x}})$ and $( \bar{\mathbf{q}}, \bar{\mathbf{v}})$. In particular, under any stationary measure, the distribution of $(\mathbf{Q}_{\mathbf{x}}, \mathbf{V}_{\mathbf{x}})$ is also stationary.

In the next subsection, we will see that the total instantaneous energy current denoted by $\sum_{\mathbf{x}} j_{\mathbf{x},\mathbf{x}+\mathbf{e}_1}$ is explicitly given as
\[
\sum_{\mathbf{x}} j_{\mathbf{x},\mathbf{x}+\mathbf{e}_1}=-\frac{1}{2}\sum_{j=1}^{d^*}\sum_{\mathbf{x}}v_{\mathbf{x}}^j(q_{\mathbf{x}+\mathbf{e}_1}^j-q_{\mathbf{x}-\mathbf{e}_1}^j)
\]
and plays an essential role for the asymptotic behavior of the Green-Kubo integral. Since 
\[
\sum_{j=1}^{d^*}\sum_{\mathbf{x}}v_{\mathbf{x}}^j(q_{\mathbf{x}+\mathbf{e}_1}^j-q_{\mathbf{x}-\mathbf{e}_1}^j)=\sum_{j=1}^{d^*}\sum_{\mathbf{x}}V_{\mathbf{x}}^j(Q_{\mathbf{x}+\mathbf{e}_1}^j-Q_{\mathbf{x}-\mathbf{e}_1}^j),
\]
this total instantaneous energy current depends only on the centered configuration $(Q_{\mathbf{x}}, V_{\mathbf{x}})$. Also, since $\sum_{\mathbf{x}}Q_{\mathbf{x}}=\sum_{\mathbf{x}}V_{\mathbf{x}} =\mathbf{0}$ at time $t=0$, it stays in $\Omega_{N,\tilde{E}}$ for $\tilde{E}=\frac{1}{N^d} \sum \tilde{\mathcal{E}}_{\mathbf{x}}$ where $\tilde{\E}_\mathbf{x}$ is the energy of the oscillator $\mathbf{x}$ associated to the configuration $(\mathbf{Q}_{\mathbf{x}}, \mathbf{V}_{\mathbf{x}})$. By definition, 
\begin{align*}
E:=\frac{1}{N^d}\sum_{\mathbf{x}}\E_{\mathbf{x}} & =\frac{1}{N^d}\sum_{\mathbf{x}} \left(\frac{|\mathbf{v}_{\mathbf{x}}|^2}{2}+\sum_{|\mathbf{y}-\mathbf{x}|=1}\frac{|\mathbf{q}_{\mathbf{y}}-\mathbf{q}_{\mathbf{x}}|^2}{4} \right) \\
& = \frac{1}{N^d}\sum_{\mathbf{x}} \left( \frac{|\mathbf{V}_{\mathbf{x}}|^2+2\mathbf{V}_{\mathbf{x}} \cdot \bar{\mathbf{v}} +  |\bar{\mathbf{v}}|^2}{2}+\sum_{|\mathbf{y}-\mathbf{x}|=1}\frac{|\mathbf{Q}_{\mathbf{y}}-\mathbf{Q}_{\mathbf{x}}|^2}{4}\right) \\
& = \frac{1}{N^d}\sum_{\mathbf{x}} \tilde{\E}_\mathbf{x} +\frac{|\bar{\mathbf{v}}|^2}{2}=\tilde{E}+\frac{|\bar{\mathbf{v}}|^2}{2}.
\end{align*}

From the above observation, on a general microcanonical state with the averaged energy $E$ and the square of the norm of averaged velocity $|\bar{\mathbf{v}}|^2$, the distribution of the centered configuration $(\mathbf{Q}_{\mathbf{x}}, \mathbf{V}_{\mathbf{x}})$ is $\mu_{N,\tilde{E}}$ where $\tilde{E}=E-\frac{|\bar{\mathbf{v}}|^2}{2}$, and the microcanonical Green-Kubo integral is given via $\kappa_{N,\tilde{E}}^{a,b}$. 

In this way, we can reduce the study of the Green-Kubo integral on a general microcanonical state to the one on the microcanonical state $\Omega_{N,E}$.

\subsection{Instantaneous energy current correlation}\label{currentsec1}

In this subsection, we study the asymptotic behavior of the correlation of the integrated energy current $\kappa_{N,E}^{a,b}(t)$. We follow the strategy of \cite{BBO}.

The energy conservation law can be read locally as
\[
\E_{\mathbf{x}}(t)-\E_{\mathbf{x}}(0) = \sum_{a=1}^d \big(J_{\mathbf{x}-\mathbf{e}_a,\mathbf{x}}([0, t]) - J_{\mathbf{x},\mathbf{x}+\mathbf{e}_a} ([0, t])\big)
\]
where $J_{\mathbf{x},\mathbf{x}+\mathbf{e}_a} ([0, t])$ is the total energy current between $\mathbf{x}$ and $\mathbf{x}+\mathbf{e}_a$ up to
time $t$. This can be written as
\[
J_{\mathbf{x},\mathbf{x}+\mathbf{e}_a} ([0, t])=\int_0^t j_{\mathbf{x},\mathbf{x}+\mathbf{e}_a}(s)ds + \M_{\mathbf{x},\mathbf{x}+\mathbf{e}_a}(t)
\]
where $\M_{\mathbf{x},\mathbf{x}+\mathbf{e}_a}(t)$ is the martingale given by
\[
\M_{\mathbf{x},\mathbf{x}+\mathbf{e}_a}(t)=-\frac{1}{2}\sum_{j=1}^{d^*} \int_0^t \big\{ (v_{\mathbf{x}+\mathbf{e}_a}^j)^2(s-)-(v_{\mathbf{x}}^j)^2(s-) \big\} dM_{j,\mathbf{x},\mathbf{x}+\mathbf{e}_a}(s)
\]
with $M_{j,\mathbf{x},\mathbf{x}+\mathbf{e}_a}(t)=N_{j,\mathbf{x},\mathbf{x}+\mathbf{e}_a}^{\ga}(t)-\ga t$ and $\{N_{j,\mathbf{x},\mathbf{x}+\mathbf{e}_a}^{\ga}(t)\}_{j=1,\dots,d^*, \ \mathbf{x} \in \Z_N^d,a=1,\dots,d}$ are $d^* d N^d$-independent Poisson processes with intensity $\ga$.


The instantaneous energy current $j_{\mathbf{x},\mathbf{x}+\mathbf{e}_a}$ can be written as $j_{\mathbf{x},\mathbf{x}+\mathbf{e}_a}=j_{\mathbf{x},\mathbf{x}+\mathbf{e}_a}^a+j_{\mathbf{x},\mathbf{x}+\mathbf{e}_a}^s$ with
\begin{align*}
j_{\mathbf{x},\mathbf{x}+\mathbf{e}_a}^a  = -\frac{1}{2} \sum_{j=1}^{d^*} (q_{\mathbf{x}+\mathbf{e}_a}^j-q_{\mathbf{x}}^j)(v_{\mathbf{x}+\mathbf{e}_a}^j+v_{\mathbf{x}}^j), \quad 
j_{\mathbf{x},\mathbf{x}+\mathbf{e}_a}^s  =- \frac{\ga}{2} \sum_{j=1}^{d^*} \big\{ (v_{\mathbf{x}+\mathbf{e}_a}^j)^2-(v_{\mathbf{x}}^j)^2 \big\}
\end{align*}
where $j_{\mathbf{x},\mathbf{x}+\mathbf{e}_a}^a$ is the contribution of the deterministic dynamics to the instantaneous energy current and $j_{\mathbf{x},\mathbf{x}+\mathbf{e}_a}^s$ is the stochastic noise contribution to it.
\begin{rem}
Since $G \E_{\mathbf{x}}=0$, the instantaneous energy currents does not depend on $B$.
\end{rem}
Recall that the microcanocial Green-Kubo integral is defined by taking the infinite size limit in
\begin{align*}
\kappa_{N,E}^{a,b}(t)=\frac{1}{2 N^dE^2 t} \mathbb{E}_{N,E} [ \big( \sum_{\mathbf{x} \in \Z^d_N} J_{\mathbf{x},\mathbf{x}+\mathbf{e}_a}([0,t]) \big)\big( \sum_{\mathbf{x} \in \Z^d_N} J_{\mathbf{x},\mathbf{x}+\mathbf{e}_b}([0,t]) \big) ].
\end{align*}

We first give a simple lemma about a spatial symmetry of the system.

\begin{lem}\label{symm}
For any $1 \le a,b \le d$,
\[
\kappa_{N,E}^{a,b}(t) =\delta_{a,b} \kappa_{N,E}^{1,1}(t).
\]
\end{lem}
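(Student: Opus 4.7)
The plan is to exploit two lattice symmetries of the dynamics: an axis swap and an axis reflection. The key point is that the generator $L=A+BG+\gamma S$ is invariant under every graph automorphism $\phi$ of $\Z^d_N$. Indeed, $A$ and $S$ are built from discrete gradients and from sums over nearest-neighbor pairs, while $G=\sum_\mathbf{x}\mathbf{v}_\mathbf{x}\cdot\sigma\partial_{\mathbf{v}_\mathbf{x}}$ acts only on the internal $\R^{d^*}$-coordinates and therefore commutes trivially with any lattice relabelling. The microcanonical measure $\mu_{N,E}$ is also $\phi$-invariant, since the energy depends on the configuration only through $|\mathbf{v}_\mathbf{x}|^2$ and $|\mathbf{q}_\mathbf{y}-\mathbf{q}_\mathbf{x}|^2$ over nearest-neighbor pairs. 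Consequently the pushforward $(\Phi_\phi(\mathbf{q},\mathbf{v}))_\mathbf{x}:=(\mathbf{q}_{\phi(\mathbf{x})},\mathbf{v}_{\phi(\mathbf{x})})$ sends the process started from $\mu_{N,E}$ into another copy of itself, and any functional of the trajectory has the same law as the corresponding functional of the pushed-forward trajectory.

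For the diagonal case I would take $\phi=\pi_{1,a}$, the transposition of the $1$st and $a$th lattice axes, so that $\phi(\mathbf{e}_1)=\mathbf{e}_a$ and the oriented bond $(\mathbf{x},\mathbf{x}+\mathbf{e}_1)$ is mapped to $(\phi(\mathbf{x}),\phi(\mathbf{x})+\mathbf{e}_a)$. Since $J_{\mathbf{x},\mathbf{x}+\mathbf{e}_a}([0,t])$ is a pathwise functional of the trajectory (the net energy flux across the oriented bond, including both the deterministic contribution and the velocity-exchange jumps), a direct relabelling gives $\sum_\mathbf{x}\tilde J_{\mathbf{x},\mathbf{x}+\mathbf{e}_1}([0,t])=\sum_\mathbf{x} J_{\mathbf{x},\mathbf{x}+\mathbf{e}_a}([0,t])$. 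Taking the second moment and invoking the distributional invariance then yields $\kappa^{a,a}_{N,E}(t)=\kappa^{1,1}_{N,E}(t)$.

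For the off-diagonal case $a\neq b$ I would use $\phi=R_a$, the reflection flipping only the $a$th lattice coordinate. Under $R_a$ the oriented bond $(\mathbf{x},\mathbf{x}+\mathbf{e}_a)$ is sent to $(R_a\mathbf{x},R_a\mathbf{x}-\mathbf{e}_a)$, so the total $a$-current reverses, $\sum_\mathbf{x}\tilde J_{\mathbf{x},\mathbf{x}+\mathbf{e}_a}([0,t])=-\sum_\mathbf{x} J_{\mathbf{x},\mathbf{x}+\mathbf{e}_a}([0,t])$, whereas the total $c$-current for any $c\neq a$ is unchanged because $R_a\mathbf{e}_c=\mathbf{e}_c$. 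The distributional invariance then forces the cross-moment defining $\kappa^{a,b}_{N,E}(t)$ to equal its own negative, and therefore to vanish.

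The main point requiring care is verifying that the sign and permutation rules for $\tilde J$ apply equally to the jump (martingale) contribution $\mathcal{M}_{\mathbf{x},\mathbf{x}+\mathbf{e}_a}(t)$ coming from the velocity-exchange noise. This is straightforward once one notes that each jump contributes to $J_{\mathbf{x},\mathbf{x}+\mathbf{e}_a}$ a quantity determined solely by the velocities just before the jump and by the orientation of the crossed bond, so it transforms in exactly the same way as the bond itself under relabelling; equivalently, the independent Poisson clocks $\{N^{\ga}_{j,\mathbf{x},\mathbf{x}+\mathbf{e}_a}\}$ are exchangeable across $\mathbf{x}$ and $a$, so reindexing them by $\phi$ preserves the joint law of the driving noise.
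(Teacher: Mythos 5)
Your proposal is correct and follows essentially the same route as the paper: an axis transposition gives $\kappa^{a,a}_{N,E}(t)=\kappa^{1,1}_{N,E}(t)$, and the reflection of the $a$-th lattice coordinate flips the sign of the total $a$-current while fixing the $b$-current, forcing $\kappa^{a,b}_{N,E}(t)=0$ for $a\neq b$. Your extra remarks on the invariance of $L$, of $\mu_{N,E}$, and of the Poisson driving noise under the relabelling simply make explicit what the paper summarizes as ``by the symmetry of the generator.''
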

\begin{proof}
Let $\tau: \{1,2,\dots, d\} \to \{1,2,\dots, d\}$ be any permutation and $(\tau \mathbf{x})^a=\mathbf{x}^{\tau(a)}$. From the symmetry with respect to $\tau$ of the generator $L$ or simply by the explicit form of (\ref{eq:dynamics}) and (\ref{Sop}), the distribution of the Markov process $(\mathbf{q}_{\tau \mathbf{x}} (t) ,\mathbf{v}_{\tau \mathbf{x}}(t))$ with the initial measure $\mu_{N,E}$ is the same as that of $(\mathbf{q}_{\mathbf{x}} (t) ,\mathbf{v}_{\mathbf{x}}(t))$ with the same initial measure. In particular, $\kappa_{N,E}^{a,a}(t)=\kappa_{N,E}^{1,1}(t)$ for any $1 \le a \le d$. Next, let $R^a: \Z^d_N \to \Z^d_N$ be the permutation $(R^a  \mathbf{x})^b = (-1)^{\delta_{a,b}}\mathbf{x}^b$. By the same reason, the distribution of $(\mathbf{q}_{R^a \mathbf{x}} (t) ,\mathbf{v}_{R^a \mathbf{x}}(t))$ is same as that of $(\mathbf{q}_{\mathbf{x}} (t) ,\mathbf{v}_{\mathbf{x}}(t))$ with the initial measure $\mu_{N,E}$. For $a \neq b$, 
\[
\sum_{\mathbf{x} \in \Z^d_N} J_{\mathbf{x},\mathbf{x}+\mathbf{e}_a}([0,t]) = -\sum_{\mathbf{x} \in \Z^d_N} J_{R^a\mathbf{x},R^a(\mathbf{x}+\mathbf{e}_a)}([0,t]), \quad  \sum_{\mathbf{x} \in \Z^d_N} J_{\mathbf{x},\mathbf{x}+\mathbf{e}_b}([0,t]) = \sum_{\mathbf{x} \in \Z^d_N} J_{R^a\mathbf{x},R^a(\mathbf{x}+\mathbf{e}_b)}([0,t])
\]
and so $\kappa_{N,E}^{a,b}(t)= -\kappa_{N,E}^{a,b}(t)=0$. 
\end{proof}

We denote $\mathcal{J}_{\mathbf{e}_1}[0,t]=\sum_{\mathbf{x} \in \Z_N^d}J_{\mathbf{x},\mathbf{x}+\mathbf{e}_1}([0,t])$. Since $\sum_{\mathbf{x}} j^s_{\mathbf{x},\mathbf{x}+\mathbf{e}_1}=0$, we have
\begin{align*}
 \frac{1}{N^d}\mathbb{E}_{N,E} [\mathcal{J}_{\mathbf{e}_1}[0,t]^2 ] 
= \frac{1}{N^d} \mathbb{E}_{N,E} [ ( \int_0^t \sum_{\mathbf{x}} j^a_{\mathbf{x},\mathbf{x}+\mathbf{e}_1}(s)ds  + \M^1(t) ) ^2 ]
\end{align*}
where $\M^1$ is a martingale given by
\[
\M^1 (t)= -\frac{1}{2}\sum_{\mathbf{x} \in \Z_N^d}\sum_{j=1}^{d^*} \int_0^t \big\{ (v_{\mathbf{x}+\mathbf{e}_1}^j)^2(s-)-(v_{\mathbf{x}}^j)^2(s-) \big\} dM_{j,\mathbf{x},\mathbf{x}+\mathbf{e}_1}(s).
\] 

Since $\sum_{\mathbf{x}} j^a_{\mathbf{x},\mathbf{x}+\mathbf{e}_1}=-\frac{1}{2}\sum_{j=1}^{d^*}\sum_{\mathbf{x}}v_{\mathbf{x}}^j(q_{\mathbf{x}+\mathbf{e}_1}^j-q_{\mathbf{x}-\mathbf{e}_1}^j)$ is bounded under the condition $\sum_{\mathbf{x}} \E_{\mathbf{x}} =E N^d$, $\A^1(t):=\int_0^t \sum_{\mathbf{x}} j^a_{\mathbf{x},\mathbf{x}+\mathbf{e}_1}(s)ds $ is a bounded variation process. 
Then, by Ito's formula for jump processes (c.f. \cite{P}), we have
\begin{align*}
\mathbb{E}_{N,E} [ \A^1(t) \M^1(t) ] & =\mathbb{E}_{N,E} [ \int_0^t\M^1(s)d\A^1(s) ]= \mathbb{E}_{N,E} [ \int_0^t\M^1(s)\sum_{\mathbf{x}} j^a_{\mathbf{x},\mathbf{x}+\mathbf{e}_1}(s)ds ]\\
& = \int_0^t \mathbb{E}_{N,E} [ \M^1(s)\sum_{\mathbf{x}} j^a_{\mathbf{x},\mathbf{x}+\mathbf{e}_1}(s) ]ds.
\end{align*}
Then, by considering the time reversed process as in \cite{BBO}, we obtain $\mathbb{E}_{N,E} [ \M^1(s)\sum_{\mathbf{x}} j^a_{\mathbf{x},\mathbf{x}+\mathbf{e}_1}(s) ]=0$ for any $s \ge 0$, and hence $\mathbb{E}_{N,E} [ \A^1(t) \M^1(t) ]=0$. Here, the generator of the time reversed process is $-A-BG+\gamma S$, but it does not make any difficulty to apply the argument in \cite{BBO}, since the current does not depend on $B$.

Also,
\begin{align*}
\frac{1}{N^d} \mathbb{E}_{N,E} [(\M^1(t))^2 ]= \frac{\ga}{4N^d} \sum_{\mathbf{x} \in \Z_N^d}\sum_{j=1}^{d^*} \int_0^t \mathbb{E}_{N,E} [ \big\{ (v_{\mathbf{x}+\mathbf{e}_1}^j)^2(s-)-(v_{\mathbf{x}}^j)^2(s-) \big\}^2 
]  ds \\
= \frac{\ga t}{4} \sum_{j=1}^{d^*} E_{N,E}[\big\{ (v_{\mathbf{e}_1}^j)^2-(v_{\mathbf{0}}^j)^2 \big\}^2 ] =\frac{d^* \ga t}{4}E_{N,E}[\big\{ (v_{\mathbf{e}_1}^1)^2-(v_{\mathbf{0}}^1)^2 \big\}^2 ].
\end{align*}

Thanks to the equivalence of ensembles given in Lemma \ref{equivalence} in Appendix \ref{app:equivalence}, the last quantity is equal to 
\[
\frac{E^2\ga t }{d^*}+ o(N)
\] 
where $\lim_{N \to \infty} o(N) =0$.
So far, we have shown that
\begin{align*}
& \frac{1}{2 E^2 tN^d} \mathbb{E}_{N,E} [(\sum_{\mathbf{x} \in \Z^d_N} J_{\mathbf{x},\mathbf{x}+\mathbf{e}_1}([0,t]))^2] \\
& = \frac{1}{2 E^2 tN^d} \int_0^t \int_0^t \mathbb{E}_{N,E}[\big( \sum_{\mathbf{x}} j^a_{\mathbf{x},\mathbf{x}+\mathbf{e}_1}(s) \big)\big(\sum_{\mathbf{x}} j^a_{\mathbf{x},\mathbf{x}+\mathbf{e}_1}(u) \big) ]dsdu + \frac{\ga}{2d^*}+ o(N). 
\end{align*}
By the spatial translation symmetry and a simple computation, the last term is rewritten as
\begin{align*}
& \frac{1}{2 E^2 t}\int_0^t \int_0^t \mathbb{E}_{N,E}[ \sum_{\mathbf{x}} j^a_{\mathbf{x},\mathbf{x}+\mathbf{e}_1}(s) j^a_{\mathbf{0},\mathbf{e}_1}(u) ]dsdu + \frac{\ga}{2d^*}+ o(N)  \\
 & = \frac{1}{E^2} \int_0^t (1-\frac{s}{t}) C_N(s) ds + \frac{\ga}{2d^*}+ o(N)
\end{align*}
where $C_{N}(s)=\mathbb{E}_{N,E}[ \sum_{\mathbf{x}} j^a_{\mathbf{x},\mathbf{x}+\mathbf{e}_1}(s) j^a_{\mathbf{0},\mathbf{e}_1}(0) ]$. 
Then, Theorem \ref{mr1} follows immediately from the next proposition.
\begin{prop}\label{propu1}
The sequence of functions $\{ C_{N} :[0, \infty) \to \R \} $ converges compactly to a function $C_{\infty}:[0,\infty) \to \R$ as $N \to \infty$, namely the convergence is uniform on each compact subset of $[0,\infty)$. Moreover, 
\begin{align*}
C_{\infty}(s)=C_{1}(s)+C_{2}(s)+C_{3}(s)+(d^*-2)C_{4}(s)
\end{align*}
where $\displaystyle \limsup_{t \to \infty} \int_0^t(1-\frac{s}{t})C_{1}(s) < \infty$, $C_{2}(s) \sim s^{-d/2-1}$, $C_3(s) \sim s^{-d/4-1/2}$ and $C_4(s) \sim s^{-d/2}$ as $s \to \infty$.
\end{prop}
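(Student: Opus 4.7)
The plan is to adapt the Fourier-plus-resolvent strategy of \cite{BBO} to the magnetic setting. The basic object is the Laplace transform
\begin{equation*}
\hat C_N(\lambda) := \int_0^\infty e^{-\lambda s} C_N(s)\,ds = \frac{1}{N^d}\, \mathbb{E}_{N,E}\Bigl[\bigl(\sum_\mathbf{x} j^a_{\mathbf{x},\mathbf{x}+\mathbf{e}_1}\bigr)\, u_\lambda\Bigr],
\end{equation*}
where $u_\lambda$ solves the resolvent equation $(\lambda - L)u_\lambda = \sum_\mathbf{x} j^a_{\mathbf{x},\mathbf{x}+\mathbf{e}_1}$; this representation is available because $A+BG$ is antisymmetric and $\gamma S$ symmetric under $\mu_{N,E}$. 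Since the current is a translation-invariant quadratic form in $(\mathbf{q},\mathbf{v})$, it decouples in the discrete Fourier variables $\theta \in \frac{1}{N}\Z^d_N$ as a sum over $\theta$ of terms proportional to $\sin(2\pi\theta^1)\,\mathrm{Im}\bigl(\hat v^j(\theta)\overline{\hat q^j(\theta)}\bigr)$. The resolvent can then be sought within the space of quadratic forms diagonal in $\theta$, where by translation invariance the equation reduces, mode by mode, to a finite-dimensional linear system whose size depends only on $d^*$.

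I would next diagonalize the deterministic block $A+BG$ in each Fourier mode by means of the wave variables $\psi_\pm^j(\theta) = \omega_\theta \hat q^j(\theta) \pm i \hat v^j(\theta)$. The operator $A$ alone has eigenvalues $\pm i\omega_\theta$, and adding $BG$, which couples only $j=1,2$, produces the two magnetic eigenfrequencies $\tilde\omega^\pm(\theta) = \sqrt{\omega_\theta^2 + (B/2)^2} \pm B/2$, while for $d^*\ge 3$ the remaining $d^*-2$ transverse directions decouple and keep the bare frequency $\omega_\theta$. This spectral picture motivates the announced four-term decomposition: $C_1$ collects the non-resonant cross terms whose Laplace transform will be analytic at $0$; $C_2$ is the quadratic $v^2$-type contribution driven by the $\gamma S$ piece; $C_3$ is produced by the two magnetic modes; and $C_4$ is the $(d^*-2)$-fold contribution from the transverse $\omega_\theta$ modes. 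The noise generator $\gamma S$, restricted to quadratic functions diagonal in $\theta$, acts as multiplication by a scalar of order $\gamma \sum_{a=1}^d \sin^2(\pi \theta^a)$ together with a finite-rank swap of $(v^j)^2$ variables, exactly as in \cite{BBO}. Solving the resulting finite system in each $\theta$ gives $\hat C_N(\lambda)$ as an explicit Riemann sum over $\theta$ of rational functions of $\lambda,\gamma,B$ and $\omega_\theta$; these are uniformly bounded in $N$ and converge pointwise, so $\hat C_N(\lambda) \to \hat C_\infty(\lambda) = \int_{\T^d}(\cdots)\,d\theta$, and compact convergence $C_N \to C_\infty$ will then follow by termwise Laplace inversion.

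The final step is to extract small-$\lambda$ asymptotics of each piece from the explicit integral over $\theta \in \T^d$, whose dominant contribution comes from $\theta$ near $0$. Using $\omega_\theta \approx 2\pi |\theta|$ near $\theta = 0$, the transverse modes give $\hat C_4(\lambda) \sim \lambda^{d/2 - 1}$ and hence $C_4(s) \sim s^{-d/2}$, while the $v^2$-noise piece gives $\hat C_2(\lambda) \sim \lambda^{d/2}$ and hence $C_2(s) \sim s^{-d/2-1}$; both are consistent with \cite{BBO}. The crucial new feature is the slow magnetic branch: since $\tilde\omega^-(\theta) \approx 2\pi^2|\theta|^2/|B|$ near $\theta = 0$, rescaling $|\theta| \sim \lambda^{1/4}$ in the $d$-dimensional integral yields $\hat C_3(\lambda) \sim \lambda^{d/4 - 1/2}$ and therefore $C_3(s) \sim s^{-d/4 - 1/2}$, which is the origin of the novel exponent. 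The main obstacle will be this Laplace inversion step: because the correlation is strongly oscillatory and not monotone (as emphasized after Theorem \ref{mr1}), a standard Tauberian theorem does not apply. A careful contour deformation of the inverse Laplace transform, together with uniform control of the $\theta$-derivatives of the explicit resolvent, will be required to upgrade the small-$\lambda$ expansion of $\hat C_\infty$ to the pointwise tails of $C_1,\dots,C_4$ asserted in the proposition.
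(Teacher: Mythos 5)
Your overall architecture (resolvent equation, mode-by-mode reduction to a finite linear system in Fourier space, pointwise convergence of the Laplace transform to an explicit $\theta$-integral, then asymptotics from $\theta\approx 0$) is the same as the paper's, and the exponents you predict for $C_2,C_3,C_4$ are correct. However, there are two genuine gaps. First, you cannot conclude that ``compact convergence $C_N\to C_\infty$ will then follow by termwise Laplace inversion'': pointwise convergence of $\tilde C_N(\lambda)$ for $\lambda>0$ does not by itself give uniform convergence of $C_N$ on compacts. The paper supplies the missing ingredient in Lemma \ref{lem:uniformity1}: uniform boundedness of $\{C_N\}$ (via Cauchy--Schwarz, stationarity, the identities forced by $\sum_{\mathbf{x}}\mathbf{q}_{\mathbf{x}}=\sum_{\mathbf{x}}\mathbf{v}_{\mathbf{x}}=0$, and the equivalence of ensembles) and equicontinuity (by writing $\sum_{\mathbf{x}}j^a_{\mathbf{x},\mathbf{x}+\mathbf{e}_1}(t)-\sum_{\mathbf{x}}j^a_{\mathbf{x},\mathbf{x}+\mathbf{e}_1}(0)=\int_0^t W(s)ds+m_t$ with $L$ applied to the current and bounding $E_{N,E}[W^2]$); only then does Arzel\`a--Ascoli plus injectivity of the Laplace transform (Proposition \ref{prop:ascoli}) yield the claimed compact convergence. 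Your proposal contains no tightness argument at all.

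Second, the decisive analytic step is left as a plan that, as formulated, would not work. You propose to upgrade small-$\lambda$ expansions of $\hat C_\infty$ to \emph{pointwise} tails of $C_1,\dots,C_4$ by contour deformation, conceding that Tauberian arguments fail. But no contour argument is needed, and aiming at a pointwise tail for the oscillatory piece is the wrong target: since the limiting Laplace transform is a $\theta$-integral of a function that is \emph{rational in} $\lambda$, one inverts it exactly under the $\theta$-integral by partial fractions, obtaining $\b_1(\theta)e^{-\ga\omega_\theta^2 t}\cos(\a_1(\theta)t)+\b_2(\theta)e^{-(\ga\omega_\theta^2+\a_2(\theta))t}+\b_2(\theta)e^{-(\ga\omega_\theta^2-\a_2(\theta))t}$, and then does Laplace-type asymptotics in $\theta$ near $0$ (with $\ga\omega_\theta^2-\a_2(\theta)\approx 2\ga\omega_\theta^4/B^2$ producing the $|\theta|\sim t^{-1/4}$ scale and $C_3\sim t^{-d/4-1/2}$). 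For the oscillating term $C_1$ the pointwise decay is only of order $t^{-d/2}$, which is not integrable in $d=1,2$; the proposition only asserts $\limsup_t\int_0^t(1-\tfrac{s}{t})C_1(s)ds<\infty$, and the paper obtains exactly this by computing $\int_0^T(1-\tfrac{t}{T})e^{-\ga\omega_\theta^2 t}\cos(\a_1(\theta)t)dt$ explicitly and bounding the resulting $\theta$-integral by $\int\lvert\b_1(\theta)\rvert\,\ga\omega_\theta^2/(\ga^2\omega_\theta^4+\a_1(\theta)^2)\,d\theta<\infty$, i.e.\ by exploiting the oscillation through explicit time integration rather than any pointwise bound. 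You would need to replace your contour-deformation step by this (or an equivalent) cancellation argument; also note that in the paper's decomposition $C_2$ and $C_3$ both come from the magnetic $(1,2)$-block (the two real poles $\pm\a_2(\theta)$ of $P/Q$ shifted by $e^{-\ga\omega_\theta^2 t}$), not from the noise term as your identification suggests, though this mislabeling is not itself fatal.
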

\begin{rem}
As we will see in Subsection \ref{propsec12}, $C_1(s)$ is an oscillating term and in particular $|C_1(s)| \sim s^{-d/2}$. In $d=1$, this term is bigger than the term $C_3(s)$, which is the main term for the Green-Kubo integral. This big oscillation of the current correlation function is also seen numerically in \cite{TSS}.
\end{rem}

\subsection{Proof of Proposition \ref{propu1}}
The proof of Proposition \ref{propu1} is divided into several steps. We first show the relative compactness of the set of functions $\{C_N\}_N$.
\begin{lem}\label{lem:uniformity1}
The set of functions $\{C_N : [0,\infty) \to \R\}_N$ is uniformly bounded and equicontinuous.
\end{lem}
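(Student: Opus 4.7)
My plan is to reduce both claims to $L^2(\mu_{N,E})$-estimates of the single translation-invariant observable $h_N := \sum_{\mathbf{x} \in \Z^d_N} j^a_{\mathbf{x},\mathbf{x}+\mathbf{e}_1}$, using contractivity and differentiability of the Markov semigroup. Since $\mu_{N,E}$ is translation invariant and the generator $L$ (hence its semigroup $P_s$) commutes with spatial translations of $\Z^d_N$, one has $\mathbb{E}_{N,E}[j^a_{\mathbf{x}}(s) j^a_{\mathbf{0}}(0)] = \mathbb{E}_{N,E}[j^a_{\mathbf{x}+\mathbf{y}}(s) j^a_{\mathbf{y}}(0)]$ for every $\mathbf{y}\in\Z^d_N$. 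Averaging this in $\mathbf{y}$ and summing over $\mathbf{x}$ yields the key identity
\[
C_N(s) \;=\; \frac{1}{N^d}\, \mathbb{E}_{N,E}\big[h_N(X_s) h_N(X_0)\big] \;=\; \frac{1}{N^d}\,\langle P_s h_N,\,h_N\rangle_{\mu_{N,E}}.
\]

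For the uniform bound, the $L^2$-contractivity of $P_s$ (immediate from stationarity of $\mu_{N,E}$ and Jensen's inequality) together with Cauchy-Schwarz gives $|C_N(s)| \le N^{-d}\|h_N\|_{L^2(\mu_{N,E})}^2$. For equicontinuity, I would differentiate in time: since $h_N$ is a polynomial on the compact state space $\Omega_{N,E}$, it lies in the domain of $L$, and
\[
\tfrac{d}{ds}\langle P_s h_N,\, h_N\rangle \;=\; \langle P_s L h_N,\, h_N\rangle,
\]
which is bounded in absolute value by $\|Lh_N\|_{L^2}\|h_N\|_{L^2}$. Together, these two steps reduce the lemma to the variance bounds $\|h_N\|_{L^2(\mu_{N,E})}^2 \le CN^d$ and $\|Lh_N\|_{L^2(\mu_{N,E})}^2 \le C'N^d$: the first yields $|C_N(s)|\le C$, and the two combined yield a uniform Lipschitz estimate $|C_N(s)-C_N(t)|\le C''|s-t|$ in $N$, which is strictly stronger than equicontinuity on compact subsets of $[0,\infty)$.

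The main technical ingredient is thus the variance bound $\|h_N\|_{L^2(\mu_{N,E})}^2 \le CN^d$; the one for $Lh_N$ is entirely analogous, since $L = A + BG + \gamma S$ sends each local polynomial $j^a_{\mathbf{x}}$ to another local polynomial supported within a uniform distance of $\mathbf{x}$. I would derive this bound from the equivalence of ensembles (Lemma \ref{equivalence}) applied to $\mathbb{E}_{N,E}[j^a_{\mathbf{y}}j^a_{\mathbf{z}}]$: a direct Wick expansion under the limiting canonical Gaussian shows that $\mathbb{E}^{\mathrm{can}}[j^a_{\mathbf{y}}j^a_{\mathbf{z}}]$ vanishes unless $\mathbf{y}-\mathbf{z}\in\{-\mathbf{e}_1,0,\mathbf{e}_1\}$, because the velocities are i.i.d.\ Gaussian and each $j^a_{\mathbf{x}}$ is linear in the two $v$'s at $\mathbf{x},\mathbf{x}+\mathbf{e}_1$, so the canonical contribution to the double sum is already $O(N^d)$. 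The delicate point is to verify that the microcanonical correction, of order $N^{-d}$ per pair, when summed over the $O(N^{2d})$ pairs of sites, still stays $O(N^d)$; this uses the locality of each $j^a_{\mathbf{x}}$ together with the uniform-in-$(\mathbf{y},\mathbf{z})$ form of Lemma \ref{equivalence} from Appendix \ref{app:equivalence}.
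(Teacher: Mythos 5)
Your reduction is the same as the paper's: writing $C_N(s)=N^{-d}\langle P_s h_N,\,h_N\rangle_{\mu_{N,E}}$ with $h_N=\sum_{\mathbf{x}}j^a_{\mathbf{x},\mathbf{x}+\mathbf{e}_1}$, bounding it by $N^{-d}\|h_N\|^2_{L^2(\mu_{N,E})}$ via stationarity and Cauchy--Schwarz, and deducing a uniform-in-$N$ Lipschitz estimate from $N^{-d}\|Lh_N\|_{L^2}\|h_N\|_{L^2}$; the paper phrases the time-regularity step through the Dynkin/martingale decomposition of $\sum_{\mathbf{x}}j^a_{\mathbf{x},\mathbf{x}+\mathbf{e}_1}(t)$, which is equivalent to your semigroup differentiation. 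The places where your plan is not yet a proof are the two static variance bounds themselves.

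First, for $\|h_N\|^2_{L^2(\mu_{N,E})}\le CN^d$ your route needs a uniform-in-$(\mathbf{y},\mathbf{z})$ expansion $\mathbb{E}_{N,E}[j^a_{\mathbf{y}}j^a_{\mathbf{z}}]=\mathbb{E}^{\mathrm{can}}[j^a_{\mathbf{y}}j^a_{\mathbf{z}}]+O(N^{-d})$, but Lemma \ref{equivalence} does not provide such a statement: it gives four specific moments, and item (iv) concerns a four-point function in which \emph{both} velocity factors sit at the origin, whereas your off-diagonal pairs produce four-point functions with velocities at two distinct sites. The estimate ``canonical value plus $O(N^{-d})$ uniformly over all pairs'' is exactly what would have to be proven, and it is the delicate point you flag without resolving. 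The paper bypasses it by exact symmetry: $\mu_{N,E}$ is invariant under arbitrary permutations of the velocity labels and satisfies $\sum_{\mathbf{x}}v^1_{\mathbf{x}}=0$ on $\Omega_{N,E}$, so every off-diagonal velocity pairing equals $-\frac{1}{N^d-1}$ times a diagonal one, and the whole double sum collapses, with no error term, to $\frac{N^d}{N^d-1}E_{N,E}[(q^1_{\mathbf{e}_1}-q^1_{-\mathbf{e}_1})^2(v^1_{\mathbf{0}})^2]$; only then is Lemma \ref{equivalence} (iv) invoked. You should either adopt this symmetry argument or prove the stronger uniform equivalence-of-ensembles estimate you are implicitly using. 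Second, for $\|Lh_N\|^2_{L^2}\le C'N^d$, locality of $Lj^a_{\mathbf{x},\mathbf{x}+\mathbf{e}_1}$ is not sufficient: a sum of $N^d$ local terms with nonvanishing mean has second moment of order $N^{2d}$, and the individual $A$-contributions such as $(q^j_{\mathbf{x}+\mathbf{e}_1}-q^j_{\mathbf{x}-\mathbf{e}_1})[\Delta q^j]_{\mathbf{x}}$ do have nonzero mean. What saves the bound is that the entire $A$-part of $Lh_N$ cancels identically after summation over $\mathbf{x}$ (this is the paper's computation of the function $W$), leaving only the $B$- and $\gamma$-terms, which are odd in $v$ and can then be handled exactly as $h_N$ is. Your sketch should record this cancellation explicitly; with these two points repaired, the argument goes through.
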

\begin{proof}
By the translation invariance of the measure $\mu_{N,E}$ and the Cauchy-Schwarz inequality, 
\begin{align*}
& |C_N(s)|= |\frac{1}{N^d}\mathbb{E}_{N,E}[ \sum_{\mathbf{x}} j_{\mathbf{x},\mathbf{x}+\mathbf{e}_1}^a(s) \sum_{\mathbf{y}} j^a_{\mathbf{y},\mathbf{y}+\mathbf{e}_1}(0) ]| \\
& \le \frac{1}{N^d} \sqrt{\mathbb{E}_{N,E} [(\sum_{\mathbf{x}} j_{\mathbf{x},\mathbf{x}+\mathbf{e}_1}^a(s))^2] \mathbb{E}_{N,E} [ (\sum_{\mathbf{y}} j^a_{\mathbf{y},\mathbf{y}+\mathbf{e}_1}(0))^2 ]}.
\end{align*}
Using the time stationarity of the dynamics, we have
\[
\sup_{s \in [0,\infty)}|C_N(s)| \le  \frac{1}{N^d} E_{N,E}[ (\sum_{\mathbf{x}} j_{\mathbf{x},\mathbf{x}+\mathbf{e}_1}^a)^2].
\]

Recall that $\sum_{\mathbf{x}} j_{\mathbf{x},\mathbf{x}+\mathbf{e}_1}^a=-\frac{1}{2}\sum_{j=1}^{d^*} \sum_{\mathbf{x}} (q_{\mathbf{x}+\mathbf{e}_1}^j-q_{\mathbf{x}-\mathbf{e}_1}^j)v_{\mathbf{x}}^j$. By the symmetry of the measure $\mu_{N,E}$ under the change of variables $v^j \to -v^j$ for any fixed $j$, we have
\begin{align*}
& E_{N,E}[ (\sum_{\mathbf{x}} j_{\mathbf{x},\mathbf{x}+\mathbf{e}_1}^a)^2]=\frac{1}{4} E_{N,E}[ (\sum_{j=1}^{d^*} \sum_{\mathbf{x}} (q_{\mathbf{x}+\mathbf{e}_1}^j-q_{\mathbf{x}-\mathbf{e}_1}^j)v_{\mathbf{x}}^j)^2] \\
&  = \frac{1}{4}\sum_{j=1}^{d^*}\sum_{\mathbf{x},\mathbf{y} } E_{N,E}[ (q_{\mathbf{x}+\mathbf{e}_1}^j-q_{\mathbf{x}-\mathbf{e}_1}^j)v_{\mathbf{x}}^j  (q_{\mathbf{y}+\mathbf{e}_1}^j-q_{\mathbf{y}-\mathbf{e}_1}^j)v_{\mathbf{y}}^j].
\end{align*}
Since the measure $\mu_{N,E}$ is translation invariant and symmetric with respect to the components $j=1,2,\dots,d^*$, the last expression is equal to
\begin{align*}
\frac{d^*N^d}{4}\sum_{\mathbf{x}} E_{N,E}[ (q_{\mathbf{x}+\mathbf{e}_1}^1-q_{\mathbf{x}-\mathbf{e}_1}^1)v_{\mathbf{x}}^1  (q_{\mathbf{e}_1}^1-q_{-\mathbf{e}_1}^1)v_{\mathbf{0}}^1]. 
\end{align*}
By the symmetry of the measure $\mu_{N,E}$ with respect to any permutation on the index set $\Z_N^d$ for $v$, 
\[E_{N,E}[ (q_{\mathbf{x}+\mathbf{e}_1}^1-q_{\mathbf{x}-\mathbf{e}_1}^1)v_{\mathbf{x}}^1  (q_{\mathbf{e}_1}^1-q_{-\mathbf{e}_1}^1)v_{\mathbf{0}}^1]= E_{N,E}[ (q_{\mathbf{x}+\mathbf{e}_1}^1-q_{\mathbf{x}-\mathbf{e}_1}^1)v_{\mathbf{y}}^1  (q_{\mathbf{e}_1}^1-q_{-\mathbf{e}_1}^1)v_{\mathbf{0}}^1]
\]
for any $\mathbf{x},\mathbf{y} \neq \mathbf{0}$. Noting $\sum_{\mathbf{x}} v_{\mathbf{x}}^1=0$ under $\mu_{N,E}$, we have
\[E_{N,E}[ (q_{\mathbf{x}+\mathbf{e}_1}^1-q_{\mathbf{x}-\mathbf{e}_1}^1)v_{\mathbf{x}}^1  (q_{\mathbf{e}_1}^1-q_{-\mathbf{e}_1}^1)v_{\mathbf{0}}^1]= -\frac{1}{N^d-1}E_{N,E}[ (q_{\mathbf{x}+\mathbf{e}_1}^1-q_{\mathbf{x}-\mathbf{e}_1}^1) (q_{\mathbf{e}_1}^1-q_{-\mathbf{e}_1}^1)(v_{\mathbf{0}}^1)^2]
\]
for any $\mathbf{x} \neq \mathbf{0}$. Therefore,
\begin{align*}
&\frac{4}{d^*N^d}E_{N,E}[ (\sum_{\mathbf{x}} j_{\mathbf{x},\mathbf{x}+\mathbf{e}_1}^a)^2] \\
&= E_{N,E}[ (q_{\mathbf{e}_1}^1-q_{-\mathbf{e}_1}^1)^2(v_{\mathbf{0}}^1)^2] -\frac{1}{N^d-1} \sum_{ \mathbf{x} \neq \mathbf{0}}E_{N,E}[ (q_{\mathbf{x}+\mathbf{e}_1}^1-q_{\mathbf{x}-\mathbf{e}_1}^1) (q_{\mathbf{e}_1}^1-q_{-\mathbf{e}_1}^1)(v_{\mathbf{0}}^1)^2]  \\
&=\frac{N^d}{N^d-1}  E_{N,E}[ (q_{\mathbf{e}_1}^1-q_{-\mathbf{e}_1}^1)^2(v_{\mathbf{0}}^1)^2]. 
\end{align*}
For the last equality, we use the fact that $\sum_{ \mathbf{x} \neq \mathbf{0}}q_{\mathbf{x}\pm \mathbf{e}_1}^j=-q_{\pm\mathbf{e}_1}^j$ under $\mu_{N,E}$. Hence, the equivalence of ensemble (Lemma \ref{equivalence} (iv)) shows $ E_{N,E}[ (\sum_{\mathbf{x}} j_{\mathbf{x},\mathbf{x}+\mathbf{e}_1}^a)^2]$ is of order $N^d$ and the set of functions $\{C_N : [0,\infty) \to \R\}_N$ is uniformly bounded.

Next, we show the equicontinuity. Note that
\begin{align*}
& L (\sum_{\mathbf{x}} j_{\mathbf{x},\mathbf{x}+\mathbf{e}_1}^a) =-\frac{1}{2}L ( \sum_{j=1}^{d^*} \sum_{\mathbf{x}} (q_{\mathbf{x}+\mathbf{e}_1}^j-q_{\mathbf{x}-\mathbf{e}_1}^j) v_{\mathbf{x}}^j) \\
& = -\frac{1}{2}\sum_{j=1}^{d^*}  \sum_{\mathbf{x}} \{ (v_{\mathbf{x}+\mathbf{e}_1}^j-v_{\mathbf{x}-\mathbf{e}_1}^j) v_{\mathbf{x}}^j +  (q_{\mathbf{x}+\mathbf{e}_1}^j-q_{\mathbf{x}-\mathbf{e}_1}^j)( \Delta q_{\mathbf{x}}^j +\delta_{j,1} B v_{\mathbf{x}}^2-\delta_{j,2} B v_{\mathbf{x}}^1) +\\
& \quad \ga (q_{\mathbf{x}+\mathbf{e}_1}^j-q_{\mathbf{x}-\mathbf{e}_1}^j) \Delta  v_{\mathbf{x}}^j \} \\
& = -\frac{1}{2}\sum_{j=1}^{d^*}  \sum_{\mathbf{x}}  (q_{\mathbf{x}+\mathbf{e}_1}^j-q_{\mathbf{x}-\mathbf{e}_1}^j)( \delta_{j,1} B v_{\mathbf{x}}^2-\delta_{j,2} B v_{\mathbf{x}}^1 +\ga \Delta  v_{\mathbf{x}}^j ).
\end{align*}
We denote by $W$ the last term, namely
\[
W=-\frac{1}{2}\sum_{j=1}^{d^*}  \sum_{\mathbf{x}}  (q_{\mathbf{x}+\mathbf{e}_1}^j-q_{\mathbf{x}-\mathbf{e}_1}^j)( \delta_{j,1} B v_{\mathbf{x}}^2-\delta_{j,2} B v_{\mathbf{x}}^1 +\ga \Delta  v_{\mathbf{x}}^j ).
\]
Then,
\begin{align*}
\sum_{\mathbf{x}} j_{\mathbf{x},\mathbf{x}+\mathbf{e}_1}^a(t) - \sum_{\mathbf{x}} j_{\mathbf{x},\mathbf{x}+\mathbf{e}_1}^a(0) =\int_0^t W(s) ds + m_t 
\end{align*}
where $m_t$ is a martingale. Therefore, 
\begin{align*}
&| C_N(t)-C_N(s)|  \\
& =|\frac{1}{N^d}\mathbb{E}_{N,E}[ \sum_{\mathbf{x}} j^a_{\mathbf{x},\mathbf{x}+\mathbf{e}_1}(t) \sum_{\mathbf{y}} j^a_{\mathbf{y},\mathbf{y}+\mathbf{e}_1}(0) ] - \frac{1}{N^d} \mathbb{E}_{N,E}[ \sum_{\mathbf{x}} j^a_{\mathbf{x},\mathbf{x}+\mathbf{e}_1}(s) \sum_{\mathbf{y}} j^a_{\mathbf{y},\mathbf{y}+\mathbf{e}_1}(0) ] | \\
& \le \frac{1}{N^d}  \int_s^t \mathbb{E}_{N,E}[ |W(r)  \sum_{\mathbf{y}} j^a_{\mathbf{y},\mathbf{y}+\mathbf{e}_1}(0)|]  dr.
\end{align*}
By the Cauchy-Schwarz inequality and the time stationarity of the dynamics,
\begin{align*}
 \mathbb{E}_{N,E}[ |W(r)  \sum_{\mathbf{y}} j^a_{\mathbf{y},\mathbf{y}+\mathbf{e}_1}(0)|] 
 &\le \sqrt{ \mathbb{E}_{N,E}[ |W(r)|^2] \mathbb{E}_{N,E}[ |\sum_{\mathbf{y}} j^a_{\mathbf{y},\mathbf{y}+\mathbf{e}_1}(0)|^2]} \\
& = \sqrt{ E_{N,E}[ W^2] E_{N,E}[ (\sum_{\mathbf{y}} j^a_{\mathbf{y},\mathbf{y}+\mathbf{e}_1})^2]}.
\end{align*}
By the same argument for the term $E_{N,E}[ (\sum_{\mathbf{x}} j^a_{\mathbf{x},\mathbf{x}+\mathbf{e}_1})^2]$, we can show that $E_{N,E}[ W^2]$ is of order $N^d$. Hence, the term $\frac{1}{N^d} \sqrt{ E_{N,E}[ W^2] E_{N,E}[ (\sum_{\mathbf{y}} j^a_{\mathbf{y},\mathbf{y}+\mathbf{e}_1})^2]}$ is uniformly bounded in $N$ and so the set of functions $\{C_N : [0,\infty) \to \R\}_N$ is equicontinuous.
\end{proof}

Next, we show that the Laplace transform of $C_N$, which we denote by $\tilde{C}_N$ converges pointwise to a function $\tilde{C}_{\infty}$ on $(0,\infty)$. For each $\la >0$, let $\tilde{C}_N(\lambda)=\int_0^{\infty} C_N(s) \exp(-\la s)ds$ and $\tilde{C}_{\infty}: (0,\infty) \to \R$ be a function given by
\begin{align}
& \tilde{C}_{\infty}(\la) =\frac{2E^2}{{d^*}^2} \int_{[0,1]^d} \frac{\sin^2(2\pi \theta^1)}{\omega_{\theta}^2} \frac{(\lambda+ \ga \omega_{\theta}^2)(\la^2+2\la\ga\omega_{\theta}^2+4\omega_{\theta}^2)}{(\lambda+ \ga \omega_{\theta}^2)^2(\la^2+2\la\ga\omega_{\theta}^2+4\omega_{\theta}^2)+B^2\lambda(\lambda+2\ga\omega_{\theta}^2)}d\theta \nonumber \\ 
& + (d^*-2) \frac{E^2}{{d^*}^2} \int_{[0,1]^d} \frac{\sin^2(2\pi \theta^1)}{\omega_{\theta}^2}  \frac{1}{\la+\ga\omega_{\theta}^2}d\theta \label{eq:laplace1}
\end{align}
where $\omega_{\theta}^2=4\sum_{a=1}^d \sin^2 (\pi \theta^a)$.

\begin{prop}\label{prop:resolvent1}
For any $\lambda>0$, 
\[ \lim_{ N \to \infty}\tilde{C}_N(\lambda)=\tilde{C}_{\infty}(\la).
\]
\end{prop}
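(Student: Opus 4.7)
\textbf{Proof plan for Proposition \ref{prop:resolvent1}.}
Since $C_N(s)$ is the correlation of the bounded quadratic function $\mathcal{J} := \sum_{\mathbf{x}} j^a_{\mathbf{x},\mathbf{x}+\mathbf{e}_1}$ under the stationary measure $\mu_{N,E}$, its Laplace transform is obtained via the resolvent identity
\[
\tilde{C}_N(\lambda) = \frac{1}{N^d}\,E_{N,E}\!\left[\mathcal{J}\cdot(\lambda-L)^{-1}\mathcal{J}\right].
\]
The plan is to solve this resolvent equation explicitly on the finite-dimensional class of translation-invariant quadratic functions in $(\mathbf{q},\mathbf{v})$. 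Introducing the discrete Fourier transforms $\hat q^j_\theta=\sum_{\mathbf{x}}q^j_{\mathbf{x}}e^{2\pi i\mathbf{x}\cdot\theta}$, $\hat v^j_\theta$ for $\theta\in\tfrac{1}{N}\Z^d_N$, one first rewrites
\[
\mathcal{J} = \frac{i}{N^d}\sum_{\theta}\sin(2\pi\theta^1)\sum_{j=1}^{d^*}\hat q^j_\theta\,\hat v^j_{-\theta}
\]
and records the basic actions $A\hat q^j_\theta=\hat v^j_\theta$, $A\hat v^j_\theta=-\omega_\theta^2\hat q^j_\theta$, and (crucially) $S\hat v^j_\theta=-\omega_\theta^2 \hat v^j_\theta$, while $G$ mixes only the components $j=1,2$.

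I would then look for the resolvent in the ansatz class
\[
u_{N,\lambda} = \frac{1}{N^d}\sum_{\theta}\sum_{j,k=1}^{d^*}\Bigl[\alpha^{jk}_\theta\hat q^j_\theta\hat q^k_{-\theta}+\beta^{jk}_\theta\hat v^j_\theta\hat v^k_{-\theta}+\gamma^{jk}_\theta\hat q^j_\theta\hat v^k_{-\theta}\Bigr].
\]
Translation invariance ensures that $A$, $G$ and $S$ preserve this class mode by mode (for $S$ this requires a direct computation of the carré-du-champ correction on bilinear Fourier observables, which is proportional to the same quantity after symmetrization). The resolvent equation $(\lambda-L)u_{N,\lambda}=\mathcal{J}$ therefore reduces, at each $\theta$, to a finite linear system for the matrices $\alpha_\theta,\beta_\theta,\gamma_\theta$. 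Using the symmetry $v^j\mapsto-v^j$ of $\mu_{N,E}$ for $j\ge 3$ and the permutation symmetry among $j=3,\dots,d^*$, this system splits into a $(1,2)$-block where the magnetic coupling through $G$ is active, and $(d^*-2)$ identical one-dimensional blocks (indexed by $j\ge 3$) in which $B$ does not enter. Solving the $(1,2)$-block by direct Cramer inversion gives rise to the rational function
\[
\frac{(\lambda+\gamma\omega_\theta^2)(\lambda^2+2\lambda\gamma\omega_\theta^2+4\omega_\theta^2)}{(\lambda+\gamma\omega_\theta^2)^2(\lambda^2+2\lambda\gamma\omega_\theta^2+4\omega_\theta^2)+B^2\lambda(\lambda+2\gamma\omega_\theta^2)},
\]
while each decoupled block $j\ge 3$ yields the simpler factor $1/(\lambda+\gamma\omega_\theta^2)$.

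Once $u_{N,\lambda}$ is in hand, pairing with $\mathcal{J}$ reduces $\tilde{C}_N(\lambda)$ to a sum over $\theta$ weighted by $\sin^2(2\pi\theta^1)/\omega_\theta^2$ times the two blocks, multiplied by second moments of $\hat q^j_\theta,\hat v^j_\theta$ under $\mu_{N,E}$. By Lemma \ref{equivalence} (equivalence of ensembles), those second moments agree, up to $o(1)$, with their canonical counterparts at inverse temperature $\beta=d^*/(2E)$, which give $\tfrac{1}{\beta}=2E/d^*$ per degree of freedom; this is the origin of the overall factors $2E^2/(d^*)^2$ and $E^2/(d^*)^2$ in (\ref{eq:laplace1}). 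Finally, the Riemann-sum $\frac{1}{N^d}\sum_{\theta\in N^{-1}\Z_N^d}$ converges to $\int_{[0,1]^d}d\theta$ as $N\to\infty$, and the explicit bounds on the integrand provide the dominated convergence needed to pass to the limit at each fixed $\lambda>0$. The main obstacle is the algebraic step of inverting the $\theta$-wise linear system in the $(1,2)$-block and simplifying it to the stated rational form; a secondary but non-trivial point is the careful handling of the non-Leibniz action of $S$ on products $\hat v^j_\theta\hat v^k_{-\theta}$, which must be shown to close within the ansatz class up to terms vanishing in $\mu_{N,E}$-expectation via equivalence of ensembles.
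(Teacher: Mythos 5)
Your plan follows essentially the same route as the paper: represent $\tilde C_N(\lambda)$ as the pairing of the current with the exact solution of the resolvent equation, solve that equation within a class of translation-invariant quadratic functions (in Fourier variables a $\theta$-wise linear system whose $(1,2)$-block yields the rational factor $P(\lambda)/Q(\lambda)$ and whose $j\ge 3$ blocks yield $(\lambda+\gamma\omega_\theta^2)^{-1}$), then use equivalence of ensembles and a Riemann-sum limit. However, two steps as written would fail. First, the equipartition constant is off by a factor of two: Lemma \ref{equivalence}(i) gives $E_{N,E}[(v_{\mathbf{0}}^j)^2]=E/d^*$, so the effective inverse temperature is $\beta=d^*/E$, not $d^*/(2E)$; with your normalization the limit would not be the function $\tilde C_\infty$ of (\ref{eq:laplace1}), whose prefactors $2E^2/{d^*}^2$ and $E^2/{d^*}^2$ come precisely from $E/d^*$ per degree of freedom. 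Second, the resolvent equation must hold exactly (or with a remainder controlled when paired with the current), and ``closing the ansatz up to terms vanishing in $\mu_{N,E}$-expectation'' is not a legitimate substitute: discarding terms of small expectation from $(\lambda-L)u=\mathcal{J}$ gives no control of their contribution to $E_{N,E}[u\, j^a_{\mathbf{0},\mathbf{e}_1}]$. Fortunately the difficulty you flag does not arise: the exact solution involves only the bilinear terms $q^1q^2$, $q^jv^j$, $q^1v^2-q^2v^1$ and $v^1v^2$, each at most linear in every single velocity component, so the exchange noise acts on them exactly as a discrete Laplacian in the velocity variable (this is what Lemmas \ref{lem:resolvent1} and \ref{lem:resolvent2} verify); the non-Leibniz behaviour appears only for same-component products such as $\hat v^j_\theta\hat v^j_{-\theta}$, which are not needed.

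A further point your sketch glosses over is the pairing itself: $E_{N,E}[u_{\lambda,N}\, j^a_{\mathbf{0},\mathbf{e}_1}]$ is a fourth moment under the non-product microcanonical measure, not a combination of second moments. The paper reduces it, using the constraint $\sum_{\mathbf{x}}v_{\mathbf{x}}=0$, exchangeability of the velocity labels, and the antisymmetry $\sum_{\mathbf{x}}g^i_{\lambda,N}(\mathbf{x})=0$ of the kernels, to the single quartic quantity of Lemma \ref{equivalence}(iv), whose $O(N^{-d})$ correction is uniform over modes; only then does the Riemann-sum/dominated-convergence argument apply. Your plan should make this reduction (or an equivalent uniform Wick-factorization estimate for quartic observables under $\mu_{N,E}$) explicit rather than appealing to second moments.
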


We give a proof of this proposition in the next subsection.

From a general observation of the convergence of functions, Lemma \ref{lem:uniformity1} and Proposition \ref{prop:resolvent1} imply the existence of the limiting function $C_{\infty}$ on $[0,\infty)$ whose Laplace transform is $\tilde{C}_{\infty}$ (we give a rigorous argument for this in Proposition \ref{prop:ascoli} in Appendix \ref{app:convergence} for completeness). 

Finally, we study the asymptotic behavior of the inverse Laplace transform of $\tilde{C}_{\infty}$ in Subsection \ref{propsec12} which completes the proof of Proposition \ref{propu1}.

\begin{rem}
By taking $B=0$ in (\ref{eq:laplace1}), we obtain 
\[
\int_0^{\infty} C_{\infty}(s) \exp(-\la s)ds=\frac{E^2}{d^*} \int_{[0,1]^d} \frac{\sin^2(2\pi \theta^1)}{\omega_{\theta}^2}  \frac{1}{\la+\ga\omega_{\theta}^2}d\theta
\]
which recovers the result (48) of \cite{BBO} (where the stochastic noise is different, hence the constant does not coincide).
\end{rem}

\subsection{Proof of Proposition \ref{prop:resolvent1}}\label{propsec11}

We follow the strategy of \cite{BBO} again, for which an explicit expression of a resolvent equation is crucial.

For each $\la>0$, let $u_{\la,N}$ be the solution of the resolvent equation  
\[(\la - L) u_{\la,N}=\sum_{\mathbf{x}} j^a_{\mathbf{x},\mathbf{x}+\mathbf{e}_1}.
\]
Since $L$ is the generator of the process, we have $\tilde{C}_N(\lambda)=E_{N,E}[ u_{\la,N} \ j^a_{\mathbf{0},\mathbf{e}_1}]$. 


From Lemmas \ref{lem:resolvent1} and \ref{lem:resolvent2} in Appendix \ref{app:resolvent}, $u_{\la,N}$ is explicitly given by 
\begin{align*}
u_{\la,N} & =\sum_{\mathbf{x},\mathbf{y} \in \Z_N^d} (g^1_{\la,N}(\mathbf{x}-\mathbf{y})q_{\mathbf{x}}^1 q_{\mathbf{y}}^2+g^2_{\la,N}(\mathbf{x}-\mathbf{y})(q_{\mathbf{x}}^1 v_{\mathbf{y}}^1+q_{\mathbf{x}}^2 v_{\mathbf{y}}^2) \\
& +g^3_{\la,N}(\mathbf{x}-\mathbf{y})(q_{\mathbf{x}}^1 v_{\mathbf{y}}^2-q_{\mathbf{x}}^2 v_{\mathbf{y}}^1) +g^4_{\la,N}(\mathbf{x}-\mathbf{y})v_{\mathbf{x}}^1 v_{\mathbf{y}}^2) \\
& + \sum_{j=3}^{d^*}\sum_{\mathbf{x},\mathbf{y}\in \Z_N^d} g_{\la,N}(\mathbf{x}-\mathbf{y})q_{\mathbf{x}}^j v_{\mathbf{y}}^j
\end{align*}
with the solutions of (\ref{eq:forg}) and (\ref{eq:forg1-4}).
 By the symmetry of the measure $\mu_{N,E}$ under the change of variables $q^j \to -q^j$ or $v^j \to -v^j$ for any fixed $j$, we have
\begin{align*}
\tilde{C}_N(\lambda) = & E_N[ u_{\la,N} \ j^a_{\mathbf{0},\mathbf{e}_1}]= \sum_{\mathbf{x},\mathbf{y} \in \Z_N^d} g^2_{\la,N}(\mathbf{x}-\mathbf{y})E_N[(q_{\mathbf{x}}^1 v_{\mathbf{y}}^1+q_{\mathbf{x}}^2 v_{\mathbf{y}}^2)  \ j^a_{\mathbf{0},\mathbf{e}_1}] \\
& + \sum_{j=3}^{d^*}\sum_{\mathbf{x},\mathbf{y}\in \Z_N^d} g_{\la,N}(\mathbf{x}-\mathbf{y}) E_N[  q_{\mathbf{x}}^j v_{\mathbf{y}}^j \ j^a_{\mathbf{0},\mathbf{e}_1}].  \\
\end{align*}

Moreover, by the symmetry with respect to the components $j=1,2,\dots, d^*$, we have
\begin{align*}
& E_{N,E}[q_{\mathbf{x}}^j v_{\mathbf{y}}^j  \ j^a_{\mathbf{0},\mathbf{e}_1}] = -\frac{1}{2}E_{N,E}[q_{\mathbf{x}}^1 v_{\mathbf{y}}^1(q_{\mathbf{e}_1}^1-q_{\mathbf{0}}^1)(v_{\mathbf{e}_1}^1+v_{\mathbf{0}}^1)]
\end{align*}
for any $j$. Therefore, 
\begin{align}
\tilde{C}_N(\lambda) & = -\frac{1}{2} \sum_{\mathbf{x},\mathbf{y} \in \Z_N^d} (2g^2_{\la,N}\left(\mathbf{x}-\mathbf{y}) +(d^*-2) g_{\la,N}(\mathbf{x}-\mathbf{y})\right) E_{N,E}[q_{\mathbf{x}}^1 v_{\mathbf{y}}^1(q_{\mathbf{e}_1}^1-q_{\mathbf{0}}^1)(v_{\mathbf{e}_1}^1+v_{\mathbf{0}}^1)] \nonumber \\
& = -\frac{1}{2} \sum_{\mathbf{x},\mathbf{y} \in \Z_N^d} (2g^2_{\la,N}\left(\mathbf{x}-\mathbf{y}) +(d^*-2) g_{\la,N}(\mathbf{x}-\mathbf{y})\right) E_{N,E}[q_{\mathbf{x}}^1 v_{\mathbf{y}}^1(q_{\mathbf{e}_1}^1-q_{\mathbf{e}_{-1}}^1)v_{\mathbf{0}}^1]. \label{eq:resolvent1}
\end{align}
For the second equality, we use the fact that the sum is taken over all $\mathbf{x},\mathbf{y} \in \Z_N^d$.

By the symmetry of the measure $\mu_{N,E}$ with respect to any permutation on the index set $\Z_N^d$ for $v$, \[E_{N,E}[q_{\mathbf{x}}^1 v_{\mathbf{y}}^1(q_{\mathbf{e}_1}^1-q_{-\mathbf{e}_1}^1)v_{\mathbf{0}}^1]= E_{N,E}[q_{\mathbf{x}}^1 v_{\mathbf{y}'}^1(q_{\mathbf{e}_1}^1-q_{-\mathbf{e}_1}^1)v_{\mathbf{0}}^1]
\]
for any $\mathbf{y} \neq \mathbf{0}$ and $\mathbf{y}' \neq \mathbf{0}$. Noting $\sum_{\mathbf{x}} v_{\mathbf{x}}^1=0$, we have
\[
E_{N,E}[q_{\mathbf{x}}^1 v_{\mathbf{y}}^1(q_{\mathbf{e}_1}^1-q_{-\mathbf{e}_1}^1)v_{\mathbf{0}}^1]=-\frac{1}{N^d-1}E_{N,E}[q_{\mathbf{x}}^1 (q_{\mathbf{e}_1}^1-q_{-\mathbf{e}_1}^1)(v_{\mathbf{0}}^1)^2]
\] for any $\mathbf{y} \neq \mathbf{0}$, and so the last term of (\ref{eq:resolvent1}) is rewritten as
\begin{align*}
& -\frac{1}{2} \sum_{\mathbf{x} \in \Z_N^d}\big( 2g^2_{\la,N}(\mathbf{x})+(d^*-2)g_{\la,N}(\mathbf{x})\big)E_{N,E}[q_{\mathbf{x}}^1 (q_{\mathbf{e}_1}^1-q_{-\mathbf{e}_1}^1)(v_{\mathbf{0}}^1)^2] \\
& +\frac{1}{2(N^d-1)} \sum_{\mathbf{x},\mathbf{y} \in \Z_N^d,\mathbf{y} \neq \mathbf{0} }\big( 2g^2_{\la,N}(\mathbf{x}-\mathbf{y})+(d^*-2)g_{\la,N}(\mathbf{x}-\mathbf{y})\big) E_{N,E}[q_{\mathbf{x}}^1 (q_{\mathbf{e}_1}^1-q_{-\mathbf{e}_1}^1)(v_{\mathbf{0}}^1)^2]. \\
\end{align*}
Since $g^2_{\la,N}(\mathbf{x})=-g^2_{\la,N}(-\mathbf{x})$, $g_{\la,N}(\mathbf{x})=-g_{\la,N}(-\mathbf{x})$ and in particular $ \sum_{\mathbf{x}} g^2_{\la,N}(\mathbf{x})= \sum_{\mathbf{x}} g_{\la,N}(\mathbf{x})=0$,
 we finally obtain that 
\begin{align*}
\tilde{C}_N(\lambda) & = -\frac{1}{2}\frac{N^d}{N^d-1} \sum_{\mathbf{x} \in \Z_N^d}\big(2g^2_{\la,N}(\mathbf{x})+(d^*-2)g_{\la,N}(\mathbf{x})\big)E_{N,E}[q_{\mathbf{x}}^1 (q_{\mathbf{e}_1}^1-q_{-\mathbf{e}_1}^1)(v_{\mathbf{0}}^1)^2]\\
&=-\frac{1}{4}\frac{N^d}{N^d-1} \sum_{\mathbf{x} \in \Z_N^d}\big(2g^2_{\la,N}(\mathbf{x})+(d^*-2)g_{\la,N}(\mathbf{x})\big)E_{N,E}[(q_{\mathbf{x}}^1 - q_{-\mathbf{x}}^1)(q_{\mathbf{e}_1}^1-q_{-\mathbf{e}_1}^1)(v_{\mathbf{0}}^1)^2].
\end{align*}

Let us define the Fourier transform $\hat{f}(\mathbf{\xi})$, $\mathbf{\xi} \in \Z^d_N$ of a function $f$ on $\Z^d_N$ by 
\[
\hat{f}(\mathbf{\xi})=\sum_{\mathbf{z} \in \Z^d_N } f(\mathbf{z})e^{-2\pi i \frac{\mathbf{\xi}}{N} \cdot \mathbf{z}}.
\]
The inverse Fourier transform is given by
\[
f(\mathbf{z})=\frac{1}{N^d}\sum_{\mathbf{\xi} \in \Z^d_N } \hat{f}(\mathbf{\xi})e^{2\pi i \frac{\mathbf{\xi}}{N} \cdot \mathbf{z}}.
\]

From Lemma \ref{equivalence} (iv),
we have
\begin{align*}
& E_{N,E}[(q_{\mathbf{x}}^1 - q_{-\mathbf{x}}^1)(q_{\mathbf{e}_1}^1-q_{-\mathbf{e}_1}^1)(v_{\mathbf{0}}^1)^2] \\
&= \frac{1}{N^{d}}\left(\frac{E^2}{{d^*}^2}+O(N^{-d})\right) \sum_{\mathbf{\xi} \in \Z^d_N, \mathbf{\xi} \neq \mathbf{0}}\frac{\sin(2\pi  \frac{\mathbf{\xi}}{N} \cdot \mathbf{x})\sin(2\pi  \frac{\mathbf{\xi}}{N} \cdot \mathbf{e}_1) }{\sum_{a=1}^d\sin(\frac{\pi \mathbf{\xi}^a}{N})^2}.
\end{align*}
Since
\begin{align*}
g_{\la,N}(\mathbf{x})-g_{\la,N}(-\mathbf{x})= \frac{2i}{N^d}\sum_{\mathbf{\xi} \in \Z^d_N } \sin(2\pi  \frac{\mathbf{\xi}}{N} \cdot \mathbf{x}) \hat{g}_{\la,N} (\mathbf{\xi}),
\end{align*}
we have
\begin{align*}
& \sum_{\mathbf{x} \in \Z_N^d} g_{\la,N}(\mathbf{x})E_{N,E}[(q_{\mathbf{x}}^1 - q_{-\mathbf{x}}^1)(q_{\mathbf{e}_1}^1-q_{-\mathbf{e}_1}^1)(v_{\mathbf{0}}^1)^2]\\
&=\frac{1}{2} \sum_{\mathbf{x} \in \Z_N^d} (g_{\la,N}(\mathbf{x})-g_{\la,N}(-\mathbf{x}))E_{N,E}[(q_{\mathbf{x}}^1 - q_{-\mathbf{x}}^1)(q_{\mathbf{e}_1}^1-q_{-\mathbf{e}_1}^1)(v_{\mathbf{0}}^1)^2]\\
&= \sum_{\mathbf{x} \in \Z_N^d}\frac{i}{N^d}\sum_{\mathbf{\xi} \in \Z^d_N } \sin(2\pi  \frac{\mathbf{\xi}}{N} \cdot \mathbf{x}) \hat{g}_{\la,N}(\mathbf{\xi})\frac{1}{N^{d}} \left(\frac{E^2}{{d^*}^2}+O(N^{-d})\right) \sum_{\mathbf{\zeta} \in \Z^d_N, \mathbf{\zeta} \neq \mathbf{0} }\frac{\sin(2\pi  \frac{\mathbf{\zeta}}{N} \cdot \mathbf{x})\sin(2\pi  \frac{\mathbf{\zeta}}{N} \cdot \mathbf{e}_1) }{\sum_{a=1}^d\sin(\frac{\pi \mathbf{\zeta}^{a}}{N})^2}\\
&= \frac{i}{N^{d}} \left(\frac{E^2}{{d^*}^2}+O(N^{-d})\right) \sum_{\mathbf{\xi} \in \Z^d_N,  \mathbf{\xi} \neq \mathbf{0} } \hat{g}_{\la,N}(\mathbf{\xi}) \frac{\sin(2\pi  \frac{\mathbf{\xi}}{N} \cdot \mathbf{e}_1) }{\sum_{a=1}^d\sin(\frac{\pi \mathbf{\xi}^{a}}{N})^2}.
\end{align*}
At the last equality, we use the fact
\[
\sum_{\mathbf{x} \in \Z_N^d} \sin(2\pi  \frac{\mathbf{\xi}}{N} \cdot \mathbf{x})\sin(2\pi  \frac{\mathbf{\zeta}}{N} \cdot \mathbf{x}) =\frac{N^d}{2}(\delta_{\mathbf{\xi},\mathbf{\zeta}}-\delta_{\mathbf{\xi},-\mathbf{\zeta}}).
\]
On the other hand, by the direct computations $\hat{g}_{\la,N}(\mathbf{\xi})=\hat{g}_{\la}(\frac{ \mathbf{\xi}}{N})$ where $\hat{g}_{\la}(\theta)=\frac{i\sin(2\pi \theta^1)}{\la+\ga \omega_{\theta}^2}$. In this way, we have
\[
 \sum_{\mathbf{x} \in \Z_N^d} g_{\la,N}(\mathbf{x})E_{N,E}[(q_{\mathbf{x}}^1 - q_{-\mathbf{x}}^1)(q_{\mathbf{e}_1}^1-q_{-\mathbf{e}_1}^1)(v_{\mathbf{0}}^1)^2] 
 \to  \frac{-4E^2}{{d^*}^2} \int_{[0,1]^d} \frac{\sin^2(2\pi \theta^1)}{\omega_{\theta}^2}  \frac{1}{\la+\ga\omega_{\theta}^2}d\theta
 \]
as $N \to \infty$ since the function $\frac{\sin^2(2\pi \theta^1)}{\omega_{\theta}^2}  \frac{1}{\la+\ga\omega_{\theta}^2}$ is continuous except at the boundary and uniformly bounded on $[0,1]^d$.

In the same manner, we will have the following convergence
\[
 \sum_{\mathbf{x} \in \Z_N^d} g^2_{\la,N}(\mathbf{x})E_{N,E}[(q_{\mathbf{x}}^1 - q_{-\mathbf{x}}^1)(q_{\mathbf{e}_1}^1-q_{-\mathbf{e}_1}^1)(v_{\mathbf{0}}^1)^2] 
 \to  \frac{4E^2 i}{{d^*}^2} \int_{[0,1]^d} \hat{g}_{\lambda}^2(\theta) \frac{\sin(2\pi \theta^1)}{\omega_{\theta}^2} d\theta
 \]
if $\hat{g}^2_{\la,N}(\mathbf{\xi})= \hat{g}^2_{\la}(\frac{ \mathbf{\xi}}{N})$ for some function $\hat{g}^2_{\la}$ and $\hat{g}_{\la}^2(\theta) \frac{\sin(2\pi \theta^1)}{\omega_{\theta}^2}$ behaves well at the boundary. We prove that this is the case in the rest of this subsection.

From the explicit expression (\ref{eq:forg1-4}), it is easy to see that $\hat{g}^2_{\la,N}(\mathbf{\xi})= \hat{g}^2_{\la}(\frac{ \mathbf{\xi}}{N})$ with $(\hat{g}^1_{\la},\hat{g}^2_{\la},\hat{g}^3_{\la},\hat{g}^4_{\la})$ given as the solution of the following linear equation:
\begin{equation*}
\left(
\begin{array}{cccc}
\la & 0 & 2\omega_{\theta}^2 &  0 \\
0 & \lambda+ \ga \omega_{\theta}^2 &  B &  0  \\
-1 & -B & \lambda+ \ga \omega_{\theta}^2   &   \omega_{\theta}^2 \\
0 & 0 & -2 &  \lambda+ 2 \ga \omega_{\theta}^2  
\end{array}
\right)
\left(
\begin{array}{c}
\hat{g}^1_{\la} \\
\hat{g}^2_{\la}  \\
\hat{g}^3_{\la} \\
\hat{g}^4_{\la} 
\end{array}
\right)
=
\left(
\begin{array}{c}
0 \\
i\sin(2\pi \theta^1)  \\
0 \\
0
\end{array}
\right).
\end{equation*}
Its explicit solution is 
\begin{align*}
(\hat{g}^1_{\la},\hat{g}^2_{\la},\hat{g}^3_{\la},\hat{g}^4_{\la})=\frac{i\sin(2\pi \theta^1)}{Q(\lambda)}\left(-2B\omega_{\theta}^2 (\la+2\ga \omega_{\theta}^2 ), P(\lambda),B\lambda(\lambda+2\ga\omega_{\theta}^2 ),2B\lambda\right)
\end{align*}
with $P(\la)=(\lambda+ \ga \omega_{\theta}^2)(\la^2+2\la\ga\omega_{\theta}^2+4\omega_{\theta}^2)$ and $Q(\la)=(\lambda+ \ga \omega_{\theta}^2)^2(\la^2+2\la\ga\omega_{\theta}^2+4\omega_{\theta}^2)+B^2\lambda(\lambda+2\ga\omega_{\theta}^2)$.
In particular, 
\[
\hat{g}_{\la}^2(\theta) \frac{\sin(2\pi \theta^1)}{\omega_{\theta}^2}=\frac{i\sin^2(2\pi \theta^1)}{\omega_{\theta}^2} \frac{P(\lambda)}{Q(\lambda)}
\]
is continuous except at the boundary and uniformly bounded on $[0,1]^d$. Therefore, we have 
\[
 \sum_{\mathbf{x} \in \Z_N^d} g^2_{\la,N}(\mathbf{x})E_{N,E}[(q_{\mathbf{x}}^1 - q_{-\mathbf{x}}^1)(q_{\mathbf{e}_1}^1-q_{-\mathbf{e}_1}^1)(v_{\mathbf{0}}^1)^2] 
 \to  \frac{-4E^2}{{d^*}^2} \int_{[0,1]^d} \frac{\sin^2(2\pi \theta^1)}{\omega_{\theta}^2} \frac{P(\lambda)}{Q(\lambda)}d\theta
 \]
as $N \to \infty$.

\subsection{Analysis of the inverse Laplace transform}\label{propsec12} In this section, we study the inverse Laplace transform of (\ref{eq:laplace1}) in detail and give the asymptotic behavior of $C(s)$. 

First, we study the term
\[
C_4(t):=\mathcal{L}^{-1} \left[ \int_{[0,1]^d} \frac{\sin^2(2\pi \theta^1)}{\omega_{\theta}^2}  \frac{1}{\la+\ga\omega_{\theta}^2} d\theta \right] (t) = \int_{[0,1]^d} \frac{\sin^2(2\pi \theta^1)}{\omega_{\theta}^2} \exp(-t\ga\omega_{\theta}^2) d\theta 
\]
where $\mathcal{L}^{-1}$ represents the inverse Laplace transform operator.
The asymptotic behavior of this term is already studied in \cite{BBO} as $C_4(t) \sim t^{-d/2}$.

To analyze the remaining term, 
we first study the inverse Laplace transform of $\frac{P(\la)}{Q(\la)}$.
Let $\bar{\lambda}=\lambda+\ga \omega_{\theta}^2$. Then, $P(\la)=\bar{\lambda}(\bar{\lambda}^2-\ga^2\omega_{\theta}^4+4\omega_{\theta}^2)$ and 
$Q(\la)=\bar{\lambda}^4+(B^2-\ga^2\omega_{\theta}^4+4\omega_{\theta}^2)\bar{\lambda}^2-\ga^2B^2\omega_{\theta}^4$.

Define $\a_1(\theta)\ge 0,\a_2(\theta)\ge 0$ as the solution of
\begin{equation}\label{eqfora}
\begin{cases}
\a_1(\theta)^2-\a_2(\theta)^2=B^2-\ga^2\omega_{\theta}^4+4\omega_{\theta}^2\\
\a_1(\theta)^2\a_2(\theta)^2=\ga^2B^2\omega_{\theta}^4.
\end{cases}
\end{equation}
We emphasize that any pair of the form $(\pm \a_1(\theta), \pm \a_2(\theta))$ satisfies the equations (\ref{eqfora}), but we choose nonnegative solutions and denote them by $\a_1(\theta)$ and $\a_2(\theta)$ which are unique.
Then $Q(\la)$ is factorized as
$Q(\la)=(\bar{\lambda}^2+\a_1(\theta)^2)(\bar{\lambda}^2-\a_2(\theta)^2)$ and
\begin{equation*}
\frac{P(\la)}{Q(\la)}=\bar{\lambda}\frac{\bar{\lambda}^2-\ga^2\omega_{\theta}^4+4\omega_{\theta}^2}{(\bar{\lambda}^2+\a_1(\theta)^2)(\bar{\lambda}^2-\a_2(\theta)^2)}=\bar{\lambda}\left( \frac{\b_1(\theta)}{\bar{\lambda}^2+\a_1(\theta)^2} +\frac{2\b_2(\theta)}{\bar{\lambda}^2-\a_2(\theta)^2}\right)
\end{equation*}
where 
$\b_1(\theta)=\frac{\a_2(\theta)^2+B^2}{\a_1(\theta)^2+\a_2(\theta)^2}$ and $\b_2(\theta)=\frac{\a_1(\theta)^2-B^2}{2(\a_1(\theta)^2+\a_2(\theta)^2)}$.
Therefore, 
\begin{equation*}
\frac{P(\la)}{Q(\la)}=\b_1(\theta)\frac{\bar{\lambda}}{\bar{\lambda}^2+\a_1(\theta)^2}+\b_2(\theta)\frac{1}{\bar{\lambda}+\a_2(\theta)}+\b_2(\theta)\frac{1}{\bar{\lambda}-\a_2(\theta)}
\end{equation*}
and we can calculate the inverse Laplace transform explicitly as 
\begin{align*}
\mathcal{L}^{-1} \left[\frac{P(\la)}{Q(\la)} \right] (t) & =\b_1(\theta)\exp(-\ga\omega_{\theta}^2t)\cos(\a_1(\theta)t)+\\
& \b_2(\theta)\exp(-\ga\omega_{\theta}^2t-\a_2(\theta)t)+\b_2(\theta)\exp(-\ga\omega_{\theta}^2t+\a_2(\theta)t).
\end{align*}
Therefore, we have
\begin{align}
\mathcal{L}^{-1}& \left[\int_{[0,1]^d} \frac{\sin^2(2 \pi \theta^1)}{\omega_{\theta}^2}\frac{P(\la)}{Q(\la)} d\theta \right] (t) \nonumber \\
& =\int_{[0,1]^d} \frac{\sin^2(2 \pi \theta^1)}{\omega_{\theta}^2} \big ( \b_1(\theta)\exp(-\ga\omega_{\theta}^2t)\cos(\a_1(\theta)t)+ \nonumber \\
& \b_2(\theta)\exp(-\ga\omega_{\theta}^2t-\a_2(\theta)t)+\b_2(\theta)\exp(-\ga\omega_{\theta}^2t+\a_2(\theta)t) \big) d\theta. \label{eq:explicitlaplace}
\end{align}

By definition,
\begin{align*}
\a_1(\theta)^2=\frac{(B^2-\ga^2\omega_{\theta}^4+4\omega_{\theta}^2)+\sqrt{(B^2-\ga^2\omega_{\theta}^4+4\omega_{\theta}^2)^2+4\ga^2B^2\omega_{\theta}^4}}{2} \\
\a_2(\theta)^2=\frac{-(B^2-\ga^2\omega_{\theta}^4+4\omega_{\theta}^2)+\sqrt{(B^2-\ga^2\omega_{\theta}^4+4\omega_{\theta}^2)^2+4\ga^2B^2\omega_{\theta}^4}}{2}. \\
\end{align*}
Hence, $\a_1,\a_2,\b_1,\b_2$ are continuous functions on $[0,1]^d$. Therefore, standard analysis shows the behavior of the term (\ref{eq:explicitlaplace}) as $t$ goes to infinity is governed by the behavior of the functions $\a_1,\a_2$ and $\b_1,\b_2$ around the minimal value of $\ga\omega_{\theta}^2, \ga\omega_{\theta}^2+\a_2(\theta)$ and $\ga\omega_{\theta}^2-\a_2(\theta)$. By the explicit expression,
\begin{align}\label{eq:alpha2behave1}
\ga^2\omega_{\theta}^4-\a_2(\theta)^2=\frac{B^2+\ga^2\omega_{\theta}^4+4\omega_{\theta}^2}{2} \big(1-\sqrt{1-\frac{16\ga^2\omega_{\theta}^6}{(B^2+\ga^2\omega_{\theta}^4+4\omega_{\theta}^2)^2}} \big)  \ge 0.
\end{align}
Therefore, $\ga\omega_{\theta}^2, \ga\omega_{\theta}^2+\a_2(\theta)$ and $\ga\omega_{\theta}^2-\a_2(\theta) $ are $0$ if and only if $\omega_{\theta}^2=0$. By symmetry, we treat only the case $\theta = (0,0,\dots,0)$. 

We study the asymptotic behavior of the following three terms separately:
\begin{align*}
C_1(t):=\int_{[0,1]^d} \frac{\sin^2(2 \pi \theta^1)}{\omega_{\theta}^2}\b_1(\theta)\exp(-\ga\omega_{\theta}^2t)\cos(\a_1(\theta)t)d\theta, \\
C_2(t):=\int_{[0,1]^d} \frac{\sin^2(2 \pi \theta^1)}{\omega_{\theta}^2} \b_2(\theta)\exp(-\ga\omega_{\theta}^2t-\a_2(\theta)t) d\theta, \\
C_3(t):=\int_{[0,1]^d} \frac{\sin^2(2 \pi \theta^1)}{\omega_{\theta}^2} \b_2(\theta)\exp(-\ga\omega_{\theta}^2t+\a_2(\theta)t)  d\theta.
\end{align*}

Note that 
\begin{align*}
\a_1(\theta)^2 -B^2 & =2\omega_{\theta}^2 -\frac{\ga^2\omega_{\theta}^4}{2}+\frac{1}{2}(\sqrt{(B^2-\ga^2\omega_{\theta}^4+4\omega_{\theta}^2)^2+4\ga^2B^2\omega_{\theta}^4}-B^2)  \\
& = 2\omega_{\theta}^2 +\frac{B^2}{2}( \sqrt{1+8\frac{\omega_{\theta}^2}{B^2}+o(|\theta|^2)}-1) +o(|\theta|^2) =4\omega_{\theta}^2 +o(|\theta|^2) 
\end{align*}
as $|\theta| \to 0$. 
It implies $\b_2(\theta)= \frac{2}{B^2}|\theta|^2 + o(|\theta|^2)$ as $|\theta| \to 0$.

Then, since (\ref{eq:alpha2behave1}) implies $\a_2(\theta)= \ga |\theta|^2 + o(|\theta|^2)$ as $|\theta| \to 0$, we have 
\[
C_2(t) \sim \int_{[0,1]^d} \frac{|\theta^1|^2}{|\theta|^2} |\theta|^2\exp(-2 \ga|\theta|^2t) d\theta
= t^{-d/2-1}\int_{[0,\sqrt{t}]^d} u_1^2 \exp(-2\ga |u|^2) du \sim t^{-d/2-1}.
\]
Also, from (\ref{eq:alpha2behave1}), we have
\begin{align*}
\ga^2\omega_{\theta}^4-\a_2(\theta)^2=\frac{4\ga^2\omega_{\theta}^6}{B^2} + o(|\theta|^6), \quad
\ga\omega_{\theta}^2+\a_2(\theta)=2\ga\omega_{\theta}^2 + o(|\theta|^2),
\end{align*}
and so $\ga\omega_{\theta}^2- \a_2(\theta)= \frac{2\ga \omega_{\theta}^4}{B^2} + o(|\theta|^4)$. Then, 
\[
C_3(t) \sim \int_{[0,1]^d} \frac{|\theta^1|^2}{|\theta|^2} |\theta|^2\exp(-\frac{ 2\ga|\theta|^4}{B^2}t) d\theta
= t^{-d/4-1/2}\int_{[0,t^{1/4}]^d} u_1^2 \exp(-\frac{2\ga |u|^4}{B^2}) du \sim t^{-d/4-1/2}.
\]

Finally, we study the oscillation term $C_1(t)$. We estimate the time integral of this term, namely,
\begin{align*}
\int_0^T & (1-\frac{t}{T})\int_{[0,1]^d} \frac{\sin^2(2 \pi \theta^1)}{\omega_{\theta}^2} \b_1(\theta)\exp(-\ga\omega_{\theta}^2t)\cos(\a_1(\theta)t)d\theta dt \\
& =\int_{[0,1]^d}  \frac{\sin^2(2 \pi \theta^1)}{\omega_{\theta}^2}  \b_1(\theta)\int_0^T (1-\frac{t}{T}) \exp(-\ga\omega_{\theta}^2t)\cos(\a_1(\theta)t)dt d\theta. 
\end{align*}
By a direct computation,
\begin{align*}
& \int_0^T   \exp(-\ga\omega_{\theta}^2t)\cos(\a_1(\theta)t)dt \\
&=\frac{\exp(-\ga\omega_{\theta}^2 T)(-\ga\omega_{\theta}^2 \cos(\a_1(\theta)T)+\a_1(\theta)\sin(\a_1(\theta)T))+\ga\omega_{\theta}^2}{\ga^2\omega_{\theta}^4+\a_1(\theta)^2}
\end{align*}
and
\begin{align*}
& \frac{1}{T} \int_0^T t\exp(-\ga\omega_{\theta}^2t)\cos(\a_1(\theta)t)dt \\
& = \frac{\exp(-\ga\omega_{\theta}^2 T)(-\ga\omega_{\theta}^2 \cos(\a_1(\theta)T)+\a_1(\theta)\sin(\a_1(\theta)T))}{\ga^2\omega_{\theta}^4+\a_1(\theta)^2} \\
& - \frac{\exp(-\ga\omega_{\theta}^2 T)((\ga^2\omega_{\theta}^4-\a_1(\theta)^2) \cos(\a_1(\theta)T)-2\ga\omega_{\theta}^2\a_1(\theta)\sin(\a_1(\theta)T))}{T(\ga^2\omega_{\theta}^4+\a_1(\theta)^2)^2} \\
& +  \frac{\ga^2\omega_{\theta}^4-\a_1(\theta)^2}{T(\ga^2\omega_{\theta}^4+\a_1(\theta)^2)^2}. 
\end{align*}
Then, we have 
\begin{align*}
& \limsup_{T \to \infty} |\int_{[0,1]^d}\frac{\sin^2(2 \pi \theta^1)}{\omega_{\theta}^2}  \b_1(\theta)\int_0^T (1-\frac{t}{T}) \exp(-\ga\omega_{\theta}^2t)\cos(\a_1(\theta)t)dt d\theta| \\
& \le  \int_{[0,1]^d} |\b_1(\theta)| \frac{\ga\omega_{\theta}^2}{\ga^2\omega_{\theta}^4+\a_1(\theta)^2}d\theta < \infty.
\end{align*}

\section{Uniform and alternate charge models under canonical measures}\label{canosec}

This section provides a detailed study of the one-dimensional chain of oscillators with uniform and alternate charges in two-dimensional space under canonical measures. In the same way as the last section, we start by recalling the description of the model in Subsection \ref{modelsec2}, and characterize conserved quantities and introduce canonical measures in Subsection \ref{meassec2}. The strategy of the proof of Theorem \ref{mr2} given in Subsection \ref{currentsec2} is essentially the same as that for Theorem \ref{mr1}, but we need one new and important step, which is given in Subsection \ref{reductionsec}. The main result in this subsection allows us to reduce the problem to solve the resolvent equation in $(\mathbf{r},(\mathbf{v})$ coordinate to that in $(\mathbf{q},\mathbf{v})$ coordinate. In the last two subsections, we give proofs of other ingredients of the proof of Theorem \ref{mr2}, mainly for the alternate charge model.

\subsection{Model}\label{modelsec2}

As already discussed in the Introduction, we only consider the case $d=1$ and $d^*=2$ in this section. Let $(\mathbf{r}_x(t),\mathbf{v}_x(t))_{x \in \Z_N}$ be a a Markov process on $\R^{4N}$ generated by $L_r^{(\#)}=A_r+BG_r^{(\#)}+\gamma S_r$ where the operators $A_r,G_r^{(\#)}$ and $S$ are given in (\ref{Sopc}) and (\ref{AGopc}) for $\#=0,i,ii$. We call these dynamics as the dynamics (0), (i) and (ii) respectively.

\subsection{Conserved quantities and canonical measures}\label{meassec2}

As observed in the Introduction, the total energy $\sum_{\mathbf{x} \in \Z_N}\E_{\mathbf{x}}$ is conserved under any of the dynamics (0), (i) and (ii). The total deformation $\sum_{x \in \Z_N}r_x^j$ for $j=1,2$ are also conserved by all of them. On the other hand, the total velocity $\sum_{x \in \Z_N}v_x^j$ for $j=1,2$ are conserved only by the dynamics (0). Note that there is no conserved quantity which corresponds to the pseudomomentum $\tilde{\mathbf{p}}$ since it is given in terms of $\mathbf{q}$. However, for the dynamics (ii), $\sum_{x \in \Z_N^e} (v_x^1+v_{x+1}^1+Br_x^2)$ and $\sum_{x \in \Z_N^e} (v_x^2+v_{x+1}^2-Br_x^1)$ are also conserved where $\Z_N^e:=\{ x \in \Z_N \ ; \ x \equiv 0 \mod  2\}$. 

By direct computations, one sees immediately that the dynamics (0), (i) and (ii) are all stationary under $\mu_{N,\beta}$.
More generally, for each $\beta>0$ and $\tau=(\tau_1,\tau_2) \in \R^2$,
\begin{align*}
\mu_{N,\b,\tau}(dr dp) & =\frac{1}{Z_{\b,\tau}^{4N}}\exp(-\beta (\sum_{x} \E_x+ \sum_{j=1}^2 \tau_j\sum_{x}r_x^j)) \Pi_{x \in \Z_N} \Pi_{j=1}^2 dr_x^j dv_x^j \\
\end{align*}
are stationary measures for the dynamics (0), (i) and (ii). We mainly study the case $\tau=0$, but give some discussion on the asymptotic behavior of the Green-Kubo integral in the case $\tau \neq 0$ at Remark \ref{generalmeas}.

\subsection{Instantaneous energy current correlation}\label{currentsec2}

Recall that $J_{x,x+1} ([0, t])$ is the total energy current between $x$ and $x+1$ up to
time $t$. Let $j_{x,x+1}$ be the instantaneous energy current given by $j_{x,x+1}=j^a_{x,x+1}+j^s_{x,x+1}$ with
\begin{align*}
j_{x,x+1}^a = -\frac{1}{2} \sum_{j=1}^{2} r_x^j(v_{x+1}^j+v_x^j), \quad
j_{x,x+1}^s  =- \ga \sum_{j=1}^{2} \left( \frac{(v_{x+1}^j)^2}{2}-\frac{(v_x^j)^2}{2} \right). 
\end{align*}
Note that these currents are common among the dynamics (0), (i) and (ii).

The total energy current is written as
\[
J_{x,x+1}([0,t])=\int_0^t j_{x,x+1}(s)ds +\sum_{j=1}^{2} \int_0^t \left( \frac{(v_{x+1}^j(s-))^2}{2}-\frac{(v_x^j(s-))^2}{2} \right)dM_{j,x,x+1}(s)
\]
where $M_{j,x,x+1}(t)=N_{j,x,x+1}^{\ga}(t)-\ga t$ and $\{N_{j,x,x+1}^{\ga}(t)\}_{j=1,2, \ x \in \Z_N}$ are $2 N$-independent poisson processes with intensity $\ga$.

We can apply the same argument in Subsection \ref{currentsec1} to obtain 
\begin{align*}
& \frac{\b^2}{8tN}\mathbb{E}_{N,\b}^{(\#)} [ \big( \sum_{x \in \Z_N}  J_{x,x+1}([0,t]) \big)^2]  =\frac{\b^2}{4} \int_0^t (1-\frac{s}{t}) D_N^{(\#)}(s) ds + \frac{\ga}{4}
\end{align*}
with $D_N^{(\#)}(s)=\mathbb{E}_{N,\b}^{(\#)}[  \sum_{x} j^a_{x,x+1}(s) j^a_{0,1}(0)  ]$ for $\#=0,i,ii$. Here, $\mathbb{E}_{N,\b}^{(\#)}$ is the expectation for the dynamics $(\#)$ starting from the canonical measure $\mu_{N,\b}$. Since the canonical measures is product, we do not need $o(N)$ correction term.

Now, the next proposition is suffice to prove Theorem \ref{mr2}.
\begin{prop}\label{prop:correlation2}
For each $\#=0,i,ii$, the sequence of functions $\{ D_N^{(\#)} :[0, \infty) \to \R \} $ converges compactly to a function $D_{\infty}^{(\#)}:[0,\infty) \to \R$ as $N \to \infty$, namely the convergence is uniform on each compact subset of $[0,\infty)$. Moreover,
\[
D_{\infty}^{(0)}(s) \sim s^{-1/2}
\]
as $s \to \infty$, 
\begin{align*}
D_{\infty}^{(i)}(s) =D_1(s)+D_2(s)+D_3(s)
\end{align*}
where $\displaystyle \limsup_{t \to \infty} \int_0^t(1-\frac{s}{t})D_1(s) < \infty$, $D_2(s) \sim s^{-3/2}$, $D_3(s) \sim s^{-3/4}$ as $s \to \infty$.
Also, if $\ga \le 1$,
\[
D_{\infty}^{(ii)}(s) \sim s^{-1/2}.
\]
\end{prop}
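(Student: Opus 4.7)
The plan is to follow the three-step structure of Section \ref{microsec}: (I) uniform bounds and equicontinuity of $\{D_N^{(\#)}\}_N$, (II) pointwise convergence of the Laplace transforms $\tilde D_N^{(\#)}(\lambda) = \int_0^\infty e^{-\lambda s}D_N^{(\#)}(s)\,ds$, and (III) asymptotic analysis of the inverse Laplace transform. The central new obstacle, already flagged in the introduction, is that the resolvent equation $(\lambda - L_r^{(\#)})u = \sum_x j_{x,x+1}^a$ cannot be solved directly in the $(\mathbf{r},\mathbf{v})$ coordinates because the reflection symmetries exploited in Section \ref{microsec} are lost. My plan is to transport the equation to the $(\mathbf{q},\mathbf{v})$ picture via $\mathbf{r}_x = \mathbf{q}_{x+1} - \mathbf{q}_x$ and import the explicit solution already obtained in Appendix \ref{app:resolvent}; this reduction is the content of Subsection \ref{reductionsec}.

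For Step (I), since $\mu_{N,\beta}$ is a centered Gaussian product measure, the second moments $\mathbb{E}_{N,\beta}^{(\#)}[(\sum_x j^a_{x,x+1})^2]$ and $\mathbb{E}_{N,\beta}^{(\#)}[(L_r^{(\#)}\sum_x j^a_{x,x+1})^2]$ reduce to Wick contractions of order $N$, and the Cauchy--Schwarz-plus-stationarity argument used in Lemma \ref{lem:uniformity1} applies almost verbatim. One advantage over the microcanonical setting is that no equivalence-of-ensembles correction is needed.

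For Step (II), in cases (0) and (i) the dynamics is translation invariant, so Fourier transform in $x$ block-diagonalizes $L_r^{(\#)}$ at each wavenumber $\theta$; after the change of variable $\hat{r}(\theta)=(e^{2\pi i\theta}-1)\hat{q}(\theta)$ each block matches the one already solved in Subsection \ref{propsec11}, and one obtains an explicit rational expression for $\tilde D_\infty^{(\#)}(\lambda)$. In case (ii) the period-two magnetic term reduces translation invariance to shifts by $2$, so Fourier analysis runs on a half-size Brillouin zone and the natural block doubles in dimension, coupling wavenumbers $\theta$ and $\theta+\tfrac12$. In all three cases the Gaussian structure makes $\tilde D_N^{(\#)}(\lambda)$ a finite Fourier sum of rational functions of $\lambda$, and passing to $N\to\infty$ is a Riemann-sum argument parallel to Subsection \ref{propsec11}.

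For Step (III), case (0) yields an integrand coinciding with \eqref{eq:laplace1} at $B=0$, $d=1$, so only the $C_4$-type contribution survives and $D_\infty^{(0)}(s)\sim s^{-1/2}$. Case (i) gives precisely the algebraic structure of Subsection \ref{propsec12}: the integrand factorizes through the roots $\alpha_1(\theta),\alpha_2(\theta)$ of \eqref{eqfora} and splits as $D_1+D_2+D_3$ mirroring $C_1+C_2+C_3$, with the small-$\theta$ expansions $\alpha_2(\theta)=\gamma\theta^2+o(\theta^2)$ and $\gamma\omega_\theta^2-\alpha_2(\theta)=(2\gamma/B^2)\theta^4+o(\theta^4)$ producing $D_2\sim s^{-3/2}$ and $D_3\sim s^{-3/4}$ respectively, while $D_1$ is controlled by the same integration-by-parts estimate. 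The main difficulty I anticipate is case (ii): heuristically the acoustic mode of the doubled unit cell feels only the average of the alternating magnetic forcing and so mimics a $B=0$ chain, delivering the $s^{-1/2}$ tail, while the optical mode remains gapped and must be shown to contribute only subleading terms. I expect the hypothesis $\gamma\le 1$ to enter exactly when verifying that the roots of the doubled characteristic polynomial corresponding to the optical branch have real parts bounded below by a strictly positive constant uniformly in $\theta$; establishing this uniform-gap estimate, which is an algebraic study of a quartic or octic polynomial in $\lambda$ with $\theta$-dependent coefficients, is the principal analytic obstacle and the reason Subsection \ref{inverselaplace2} is separated from the case-(i) analysis.
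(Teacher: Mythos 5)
Your overall architecture coincides with the paper's: uniform bounds and equicontinuity as in Lemma \ref{lem:uniformity2}, convergence of the Laplace transforms via a resolvent solution imported from the $(\mathbf{q},\mathbf{v})$ picture, reuse of the Section \ref{microsec} inverse-Laplace analysis for $\#=0,i$, and a separate even/odd (doubled-cell) computation for $\#=ii$. The gap is at the reduction step, which you treat as an exact conjugation. On the state space $\R^{4N}$ a generic configuration has $\sum_x\mathbf{r}_x\neq\mathbf{0}$, so there is no periodic $\mathbf{q}$ with $\mathbf{q}_{x+1}-\mathbf{q}_x=\mathbf{r}_x$, and your substitution $\hat r(\theta)=(e^{2\pi i\theta}-1)\hat q(\theta)$ says nothing precisely at the zero mode. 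The map actually used satisfies $q^j_{x+1}-q^j_x=r^j_x-\bar r^j$, and consequently the transported solution $v^{(\#)}_{\la,N,*}=u^{(\#)}_{\la,N}\circ\Phi$ solves $(\la-L^{(\#)}_r)v^{(\#)}_{\la,N,*}=\sum_x j^a_{x,x+1}+N\sum_j\bar r^j\bar v^j$, not the resolvent equation you want. One must solve separately for the correction with right-hand side $N\sum_j\bar r^j\bar v^j$ (built from the slow quantities $\bar r^j,\bar v^j$ and, for $\#=ii$, the staggered sums) and verify that its pairing with $j^a_{0,1}$ vanishes as $N\to\infty$. This is not mere bookkeeping: Remark \ref{generalmeas} shows that under the tilted measures $\mu_{N,\b,\tau}$ with $\tau\neq0$ this very term survives in the limit and alters the answer for $\#=0,ii$, so its vanishing under $\mu_{N,\b}$ must be checked. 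Moreover the transport identity $A_rF_r=(AF)\circ\Phi$ itself holds only under the hypothesis $\sum_x\partial_{q^j_x}F=0$ (Proposition \ref{reduction}), which has to be verified for the explicit appendix solutions (Lemma \ref{lem:derivative0}); your outline addresses neither point.

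On case (ii), your acoustic/optical heuristic for the $s^{-1/2}$ tail is consistent with the actual computation, but the role of $\ga\le1$ is not a uniform positive lower bound on the real parts of the optical roots. It is used to show that the cubic $T(Y)$ extracted from the order-$6$ denominator $S(\la)$ has three distinct real negative roots $\tilde{\a}_1(\theta),\tilde{\a}_2(\theta),\tilde{\a}_3(\theta)$ for every $\theta\in(0,\tfrac14]$, which is what licenses the explicit partial-fraction decomposition with coefficients continuous up to $\theta=0$. The gapped contributions are then only bounded by $Ct\,e^{-2\ga t}$ (a factor $t$ is unavoidable because two roots coalesce as $\theta\to0$, so the individual partial-fraction coefficients blow up and a cancellation estimate of derivative type is needed), and to get the two-sided bound $D^{(ii)}_{\infty}(s)\sim s^{-1/2}$, rather than just an upper bound, one must also check that the slow-mode rate behaves like $\theta^2$ and that its coefficient has a nonzero limit, cf.\ (\ref{kappaasym}) and (\ref{kappaasym2}). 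These are computations compatible with your outline, but they are not supplied by it.
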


The main strategy of the proof of Proposition \ref{prop:correlation2} is the same as that for Proposition \ref{propu1}. 

For each $\la >0$ and $\#=0,i,ii$, let $\tilde{D}_N^{\#}(\lambda)=\int_0^{\infty} D_N^{(\#)}(s) \exp(-\la s)ds$ be the Laplace transform of $D_N^{(\#)}$ and $\tilde{D}_{\infty}^{(\#)}: (0,\infty) \to \R$ be functions defined by
\begin{align*}
\tilde{D}_{\infty}^{(0)}(\la) & =\frac{2}{\beta^2} \int_{[0,1]} \cos(\pi\theta)^2   \frac{1}{\la+\ga\omega_{\theta}^2}d\theta,   \\
\tilde{D}_{\infty}^{(i)}(\la) & =\frac{2}{\beta^2} \int_{[0,1]} \cos(\pi\theta)^2 \frac{(\lambda+ \ga \omega_{\theta}^2)(\la^2+2\la\ga\omega_{\theta}^2+4\omega_{\theta}^2)}{(\lambda+ \ga \omega_{\theta}^2)^2(\la^2+2\la\ga\omega_{\theta}^2+4\omega_{\theta}^2)+B^2\lambda(\lambda+2\ga\omega_{\theta}^2)}d\theta,  \\
\tilde{D}_{\infty}^{(ii)}(\la) & =\frac{2}{\beta^2}\int_{[0,1]} \cos(\pi\theta)^2 \frac{R(\la)}{S(\la)}d\theta 
\end{align*}
where $\omega_{\theta}^2=4\sin^2 (\pi \theta)$. $R(\la)$ and $S(\la)$ are polynomial functions of order $5$ and $6$ respectively, and their explicit expressions are given at (\ref{eq:FG}) where $\bar{\la}=\la+2\ga$.

We can follow the proof of Lemma \ref{lem:uniformity1} to prove the next lemma, so we omit the proof. 
\begin{lem}\label{lem:uniformity2}
For each $\#=0,i,ii$, the set of functions $\{D_N^{(\#)} : [0,\infty) \to \R\}_N$ is uniformly bounded and equicontinuous.
\end{lem}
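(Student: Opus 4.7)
The plan is to transcribe the proof of Lemma~\ref{lem:uniformity1} to the present canonical, one-dimensional setting, exploiting the crucial simplification that $\mu_{N,\b}$ is an explicit product Gaussian measure in the variables $\{r_x^j, v_x^j\}_{x,j}$, each centered with variance $\b^{-1}$.

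For uniform boundedness, translation invariance of $\mu_{N,\b}$ together with the Cauchy--Schwarz inequality and time stationarity of the dynamics yields
\[
\sup_{s \ge 0} |D_N^{(\#)}(s)| \le \frac{1}{N}\, E_{N,\b}\!\Big[\Big(\sum_{x} j^a_{x,x+1}\Big)^{\!2}\Big].
\]
Substituting $j^a_{x,x+1} = -\frac{1}{2}\sum_{j=1}^{2} r_x^j(v_{x+1}^j+v_x^j)$, expanding the square, and applying Wick's theorem, every surviving contraction pairs an $r$ with an $r$ and a $v$ with a $v$; by independence one gets an $O(N)$-sum of terms of size $O(\b^{-2})$, so the right-hand side is bounded uniformly in $N$ (in fact it equals $N/\b^{2}$, giving the prefactor $\b^{2}/8$ appearing in $\kappa^{(\#)}_{N,\b}$).

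For equicontinuity, I would compute $W^{(\#)} := L_r^{(\#)}\!\big(\sum_{x} j^a_{x,x+1}\big)$ case by case. The $A_r$-part produces the two telescoping sums $\sum_{x}[(v_{x+1}^j)^2-(v_x^j)^2]$ and $\sum_{x} r_x^j(r_{x+1}^j-r_{x-1}^j)$, both of which vanish; the noise part $\ga S_r$ contributes $-\frac{\ga}{2}\sum_{x,j} r_x^j\,[(\Delta v^j)_{x+1}+(\Delta v^j)_x]$ with $\Delta$ the one-dimensional discrete Laplacian; and the magnetic part $B G_r^{(\#)}$ contributes a bilinear expression of the schematic form $-\frac{B}{2}\sum_{x} \eta_x \big[r_x^1(v_x^2-v_{x+1}^2)+r_x^2(v_{x+1}^1-v_x^1)\big]$, with $\eta_x\equiv 0$ for $\#=0$, $\eta_x\equiv 1$ for $\#=i$, and $\eta_x=(-1)^x$ for $\#=ii$. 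Writing
\[
\sum_{x} j^a_{x,x+1}(t) - \sum_{x} j^a_{x,x+1}(0) = \int_0^t W^{(\#)}(s)\,ds + m_t
\]
with $m_t$ a martingale, and repeating the Cauchy--Schwarz estimate of Lemma~\ref{lem:uniformity1} together with time stationarity, one obtains
\[
|D_N^{(\#)}(t)-D_N^{(\#)}(s)| \le \frac{|t-s|}{N}\,\sqrt{E_{N,\b}\!\big[(W^{(\#)})^{2}\big]\;E_{N,\b}\!\Big[\Big(\sum_{y} j^a_{y,y+1}\Big)^{\!2}\Big]}.
\]
Since $W^{(\#)}$ is again a quadratic form in the independent Gaussians supported on $O(N)$ pairs of sites, Wick's theorem yields $E_{N,\b}[(W^{(\#)})^{2}]=O(N)$, which gives the required uniform Lipschitz bound and hence equicontinuity.

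The only mildly subtle point is the case $\#=ii$, where the alternating prefactor $(-1)^x$ in $W^{(ii)}$ could in principle create resonances after squaring. However, the product structure of $\mu_{N,\b}$ forces any nonzero two-point contraction to identify the two site indices, so the relevant product $(-1)^x(-1)^y$ collapses to $(-1)^{2x}=1$ and the bound is identical to the other cases. Note that, in contrast to the microcanonical proof, no equivalence-of-ensembles input is needed here, since every relevant second moment is an exact Gaussian integral; this is the main reason the canonical argument is substantially shorter than its microcanonical counterpart.
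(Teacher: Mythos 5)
Your proposal is correct and is essentially the paper's intended argument: the paper omits the proof of this lemma, stating only that one can follow the proof of Lemma \ref{lem:uniformity1}, and your transcription does exactly that, with the simplification (also intended by the paper) that under the product Gaussian measure $\mu_{N,\beta}$ all second moments are exact Wick computations, so no equivalence of ensembles is needed. The only cosmetic caveats are that your displayed form of the magnetic contribution to $W^{(\#)}$ has the wrong signs on the cross terms (harmless, since only locality and boundedness of the quadratic expression enter the $O(N)$ bound), and that for $\#=ii$ the one-site translation step implicitly uses the symmetry combining a unit shift with $(r^2,v^2)\to(-r^2,-v^2)$, which restores invariance of $L_r^{(ii)}$.
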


We prove the next theorem in Subsection \ref{propsec21}. 
\begin{prop}\label{prop:resolvent2}
For any $\lambda>0$ and $\#=0,i,ii$, 
\[ \lim_{ N \to \infty}\tilde{D}^{(\#)}_N(\lambda)=\tilde{D}^{(\#)}_{\infty}(\la).
\]
\end{prop}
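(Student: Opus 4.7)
My plan is to adapt the resolvent strategy of Section~\ref{microsec} to the canonical setting. For each $\# \in \{0, i, ii\}$, I would let $u^{(\#)}_{\lambda, N}$ denote the solution of the resolvent equation $(\lambda - L_r^{(\#)}) u^{(\#)}_{\lambda, N} = \sum_{x \in \Z_N} j^a_{x, x+1}$, so that by stationarity $\tilde{D}_N^{(\#)}(\lambda) = E_{N,\beta}[u^{(\#)}_{\lambda,N} \, j^a_{0,1}]$, with $E_{N,\beta}$ the static expectation under $\mu_{N,\beta}$. Since the current $j^a_{x,x+1}$ is quadratic in $(\mathbf{r},\mathbf{v})$ and each of $A_r$, $G_r^{(\#)}$, $S_r$ preserves the subspace of polynomials of degree $2$, I would look for $u^{(\#)}_{\lambda,N}$ inside this finite-dimensional subspace. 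The product Gaussian form of $\mu_{N,\beta}$ then makes the pairing $E_{N,\beta}[u^{(\#)}_{\lambda,N}\, j^a_{0,1}]$ reduce to a simple combination of coefficients, with no equivalence-of-ensembles corrections of the type needed in Section~\ref{microsec}.

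The difficulty, as highlighted in the Introduction, is that in $(\mathbf{r},\mathbf{v})$ coordinates the discrete-gradient structure of $A_r$ does not diagonalize cleanly under Fourier transform. My plan is to invoke the reduction developed in Subsection~\ref{reductionsec} in order to lift the problem to the $(\mathbf{q},\mathbf{v})$ coordinates via $r_x = q_{x+1} - q_x$, where the generator has the translation symmetry exploited in Subsection~\ref{propsec11}. For $\# = 0$, the absence of the magnetic term reduces everything to a scalar equation whose Fourier symbol is $\lambda + \gamma \omega_\theta^2$, instantly yielding $\tilde{D}^{(0)}_\infty$. For $\# = i$, the $4 \times 4$ linear system in Fourier space is precisely the one solved at the end of Subsection~\ref{propsec11} (specialized to $d=1,d^*=2$), so the expression for $\tilde{D}^{(i)}_\infty$ is essentially a direct transcription of Proposition~\ref{prop:resolvent1} with $\cos^2(\pi\theta)$ replacing $\sin^2(2\pi\theta)/(d^*)^2$ after accounting for the differences in the current and in the measure.

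The main obstacle lies in case $\# = ii$: the sign factor $(-1)^x$ breaks translation invariance by one step, while preserving invariance under shifts by $2$. Under Fourier transform this manifests itself as a coupling between modes $\theta$ and $\theta + \tfrac12$, so the ansatz must involve bilinears indexed not only by $\theta$ but also by its partner; the resulting linear system has size roughly doubled relative to case (i), which is consistent with the claimed denominator $S(\lambda)$ being of degree $6$ (rather than $4$) and numerator $R(\lambda)$ of degree $5$. The algebra required to identify the determinant of this enlarged system with $S(\lambda)$ and the relevant cofactor with $R(\lambda)$ is where most of the work will sit, and I expect it to be the heaviest step of the proof.

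Finally, once $\tilde{D}_N^{(\#)}(\lambda)$ has been written explicitly as a finite Fourier sum of the form $\frac{1}{N}\sum_{\xi \in \Z_N} F^{(\#)}(\xi/N, \lambda)$, passage to the limit is a standard Riemann-sum argument: for fixed $\lambda > 0$ the integrand $F^{(\#)}(\theta,\lambda)$ is continuous on $[0,1]$ except at the single point $\theta = 0$ (and, in case (ii), also at $\theta = \tfrac12$), while the factor $\cos^2(\pi\theta)/\omega_\theta^2$ is locally integrable there; uniform boundedness away from the singularity together with dominated convergence then yields $\tilde{D}^{(\#)}_N(\lambda) \to \tilde{D}^{(\#)}_\infty(\lambda)$.
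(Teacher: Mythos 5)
Your route is the same as the paper's (lift to $(\mathbf{q},\mathbf{v})$ via Proposition \ref{reduction}, use the explicit resolvent solutions of Appendix \ref{app:resolvent}, doubled even/odd Fourier structure for case (ii), then a Riemann-sum limit), but there is a concrete gap at the lifting step. The change of variables $\Phi$ must be defined with the centering $q^j_{x+1}-q^j_x=r^j_x-\bar r^j$ (otherwise the $q$'s are not periodic), and consequently the pull-back of the $(\mathbf{q},\mathbf{v})$-current $-\frac12\sum_x\sum_{j=1}^2(q^j_{x+1}-q^j_x)(v^j_{x+1}+v^j_x)$ is not $\sum_x j^a_{x,x+1}$ but $\sum_x j^a_{x,x+1}+N\sum_{j=1}^2\bar r^j\bar v^j$. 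Hence the lifted function $v^{(\#)}_{\la,N,*}=u^{(\#)}_{\la,N}\circ\Phi$ does \emph{not} solve your resolvent equation (\ref{eq:canonical}); you must additionally solve $(\la-L^{(\#)}_r)v^{(\#)}_{\la,N,**}=N\sum_j\bar r^j\bar v^j$ (a closed finite-dimensional problem in $\bar r^j,\bar v^j$, and for the alternate case also in $\check r^j=\frac1N\sum_x(-1)^xr^j_x$, $\check v^j=\frac1N\sum_x(-1)^xv^j_x$) and verify that $E_{N,\b}[v^{(\#)}_{\la,N,**}\,j^a_{0,1}]\to 0$ as $N\to\infty$. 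This is exactly the step carried out in Subsection \ref{propsec21}, and it is not automatic: under the tilted measures $\mu_{N,\b,\tau}$ with $\tau\neq 0$ this term does \emph{not} vanish (Remark \ref{generalmeas}), so the cancellation genuinely uses the structure of $\mu_{N,\b}$. Your proposal never mentions this correction, so as written the limit would be computed from a function that does not satisfy the equation defining $\tilde D^{(\#)}_N$. You should also record the verification of the hypothesis $\sum_x\partial_{q^j_x}u^{(\#)}_{\la,N}=0$ needed to apply Proposition \ref{reduction} (this is Lemma \ref{lem:derivative0}).

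A smaller flaw is in your final convergence argument: you invoke local integrability of $\cos^2(\pi\theta)/\omega_\theta^2$ at $\theta=0$, but in $d=1$ this factor is \emph{not} integrable there ($\omega_\theta^2\sim 4\pi^2\theta^2$). What actually makes the limit work is that the resolvent kernel carries a factor $\sin(2\pi\theta)$, so after the pairing (the summation-by-parts identity for $L(x)=E_{N,\b}[(\sum_{z\ge x}(r^1_z-\bar r^1)+\sum_{z\ge x+1}(r^1_z-\bar r^1))r^1_0]$ and Parseval) the integrand becomes $\cos^2(\pi\theta)$ times a ratio of polynomials in $\la$ whose denominator is bounded below by a positive constant for each fixed $\la>0$ (for case (ii) this is the estimate $\tilde S(Y)\ge Y^3>0$ with $Y=\la^2+4\la\ga$). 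The integrand is then bounded and continuous off finitely many points, and the Riemann sums converge; the argument should be phrased this way rather than through integrability of $1/\omega_\theta^2$.
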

The detailed analysis of the inverse Laplace transform of $\tilde{D}^{(\#)}_{\infty}$ for $\#=0,i,ii$ completes the proof of Proposition \ref{prop:correlation2}. Since the case $\#=0$ and $i$ are studied in the last section, we only analyze the inverse Laplace transform of $\tilde{D}_{\infty}^{(ii)}$ in Subsection \ref{inverselaplace2}. 

\subsection{Reduction from the coordinate $(\mathbf{q},\mathbf{v})$ to $(\mathbf{r},\mathbf{v})$}\label{reductionsec}

In the proof of Proposition \ref{prop:resolvent2}, an explicit expression of the resolvent equation (\ref{eq:canonical}) plays an essential role. However, it is not a simple problem to solve the equation in the coordinate $(\mathbf{r},\mathbf{v})$. In this subsection, we introduce a technique to obtain this solution from the solution of the associated resolvent equation in the coordinate $(\mathbf{q},\mathbf{v})$.

We consider the change of variable $\Phi: (\mathbf{r},\mathbf{v}) \to (\mathbf{q},\mathbf{v})$ defined by
\begin{align*}
q^j_x=-\sum_{y=x}^{N} (r^j_y-\bar{r}^j), \quad \bar{r}^j=\frac{1}{N}\sum_{x=1}^N r_x^j
\end{align*}
for $x=1,2,\dots,N$ and define $q^j_x=q^j_{x+N}$ for any $x \in \Z_N$. Then, we have $q^{j}_{x+1}-q^j_x=r^j_x-\bar{r}^j$.

For each $\#=0,i,ii$, let $G^{(\#)}$ be an operator which is formally given as $G_r^{(\#)}$ but acting on functions $f (\mathbf{q},\mathbf{v})$ rather than $f (\mathbf{r},\mathbf{v})$. Define the operator $L^{(\#)}=A+BG^{(\#)}+\gamma S$ acting on functions $f (\mathbf{q},\mathbf{v}) \in C^1(\R^{4N})$ with the operators $A$ and $G$ defined at (\ref{AGop}). Note that $L^{(i)}=L$ where $L$ is the generator of the dynamics studied in Section \ref{microsec}.

\begin{prop}\label{reduction}
Suppose a pair of functions $F(\mathbf{q},\mathbf{v})$ and $H(\mathbf{q},\mathbf{v})$ satisfies $(\la - L^{(\#)}) F =H$ and $\sum_{x=1}^N \partial_{q_x^j}F  =0$ for $j=1,2$. Then $F_r(\mathbf{r},\mathbf{v})$ and $H_r(\mathbf{r},\mathbf{v})$ satisfies 
$(\la - L^{(\#)}_r) F_r =H_r$ where $F_r( \mathbf{r},\mathbf{v})= F(\Phi (\mathbf{r},\mathbf{v}))$ and $H_r( \mathbf{r},\mathbf{v})= H(\Phi (\mathbf{r},\mathbf{v}))$.
\end{prop}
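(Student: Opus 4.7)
The plan is to show that $L^{(\#)}_r F_r = (L^{(\#)} F)_r$ as functions on $\R^{4N}$, from which the conclusion follows immediately by linearity since $\lambda F_r = (\lambda F)_r$. Because $G^{(\#)}$ and $S$ have the same formal expression as $G^{(\#)}_r$ and $S_r$ and involve only velocities (which $\Phi$ leaves unchanged), the identities $G^{(\#)}_r F_r = (G^{(\#)} F)_r$ and $S_r F_r = (S F)_r$ are immediate. All the work is therefore concentrated on the deterministic streaming operator, where I need to verify $A_r F_r = (A F)_r$.

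The first step is a chain rule computation. Writing $q_y^j = -\sum_{z=y}^N r_z^j + (N-y+1)\bar{r}^j$, a direct differentiation yields
\begin{equation*}
\partial_{r_x^j} q_y^j = -\mathbf{1}_{y \le x} + \frac{N-y+1}{N},
\end{equation*}
so that $\partial_{r_x^j} F_r = -\sum_{y=1}^{x} \partial_{q_y^j}F + C^j$, where $C^j$ is a quantity independent of $x$. I would then plug this into the first half of $A_r$ and use the telescoping identity $\sum_{x=1}^{N}(v_{x+1}^j - v_x^j)=0$ to eliminate the $C^j$-term outright. Exchanging the order of summation in the remaining double sum and using $\sum_{x=y}^{N}(v_{x+1}^j - v_x^j) = v_1^j - v_y^j$ rewrites it as $-v_1^j \sum_y \partial_{q_y^j}F + \sum_y v_y^j \partial_{q_y^j} F$. \emph{This is the step where the hypothesis $\sum_{x=1}^N \partial_{q_x^j} F = 0$ is crucial}: it kills the unwanted $v_1^j$ boundary term and leaves exactly the first half of $(A F)(\Phi)$.

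For the second half of $A_r$, the key observation is that the mean $\bar{r}^j$ drops out of second differences: $q_{x+1}^j - 2q_x^j + q_{x-1}^j = (r_x^j - \bar{r}^j) - (r_{x-1}^j - \bar{r}^j) = r_x^j - r_{x-1}^j$, so $[\Delta q^j]_x = r_x^j - r_{x-1}^j$. Combined with the trivial identity $\partial_{v_x^j} F_r = (\partial_{v_x^j} F)\circ \Phi$, this part of $A_r F_r$ matches the corresponding part of $(AF)_r$ verbatim. Assembling the three identities $A_r F_r = (A F)_r$, $G^{(\#)}_r F_r = (G^{(\#)} F)_r$, $S_r F_r = (S F)_r$ and subtracting from $\lambda F_r$ delivers the claim.

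The main (only genuine) obstacle is the asymmetry of the chain-rule formula for $\partial_{r_x^j} q_y^j$, which has both an $x$-dependent indicator and an $x$-independent drift coming from $\bar{r}^j$. Navigating this cleanly requires using the telescoping identity to eliminate the drift \emph{before} invoking $\sum_y \partial_{q_y^j} F = 0$ to eliminate the boundary value $v_1^j$; applying them in the wrong order leaves unwieldy residuals. Every other ingredient is either mechanical differentiation or an explicit identification of coefficients.
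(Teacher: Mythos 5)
Your proposal is correct and follows essentially the same route as the paper: reduce everything to the identity $A_rF_r=(AF)\circ\Phi$, use $[\Delta q^j]_x=r^j_x-r^j_{x-1}$ for the $\partial_v$ part, and invoke the hypothesis $\sum_{x=1}^N\partial_{q_x^j}F=0$ to dispose of the boundary term in the $\partial_r$ part. The only difference is cosmetic bookkeeping—you exchange the order of summation and telescope the velocity increments, whereas the paper computes the nearest-neighbor differences $(\partial_{r_x^j}-\partial_{r_{x-1}^j})F_r$ (and your chain-rule coefficient $\tfrac{N-y+1}{N}$ is in fact the accurate version of the $x$-independent term, which is immaterial in both arguments).
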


\begin{proof}
Since $G^{(\#)}F_r (\mathbf{r},\mathbf{v})=(G^{(\#)}F)(\Phi(\mathbf{r},\mathbf{v}))$ and $S_r F_r( \mathbf{r},\mathbf{v})=(SF)(\Phi( \mathbf{r},\mathbf{v}))$, we only need to show that $A_rF_r (\mathbf{r},\mathbf{v})=(AF)(\Phi (\mathbf{r},\mathbf{v}))$. By definition, we have 
\[
q^j_{x+1}-2q^j_x+q^j_{x-1}=-\sum_{z=x+1}^{N} (r_z^j-\bar{r}^j) +2 \sum_{z=x}^{N} (r_z^j-\bar{r}^j)  -\sum_{z=x-1}^{N} (r_z^j-\bar{r}^j)= r^j_x-r^j_{x-1}.
\]
 Also, 
\[
\partial_{r_x^j}F_r(\mathbf{r},\mathbf{v})=\sum_{y=1}^N \frac{\partial q_y^j}{\partial r_x^j }(\partial_{q_y^j}F)(\Phi (\mathbf{r},\mathbf{v}))=\sum_{y=1}^N (-\mathbf{1}_{x \ge y} +\frac{1}{N})(\partial_{q_y^j}F)(\Phi (\mathbf{r},\mathbf{v}))
\]
and hence
\[
(\partial_{r_x^j}-\partial_{r_{x-1}^j})F_r (\mathbf{r},\mathbf{v})=- (\partial_{q_x^j}F)(\Phi (\mathbf{r},\mathbf{v}))
\]
for $x= 2,3,\dots,N$. On the other hand, by assumption,
\[
(\partial_{r_{1}^j}-\partial_{r_N^j})F_r (\mathbf{r},\mathbf{v})=\sum_{y=2}^N (\partial_{q_y^j}F)(\Phi (\mathbf{r},\mathbf{v})) = -(\partial_{q_1^j}F )(\Phi (\mathbf{r},\mathbf{v})).
\]
Therefore, $A_rF_r (\mathbf{r},\mathbf{v})=( A F)(\Phi (\mathbf{r},\mathbf{v}))$.
\end{proof}

\subsection{Proof of Proposition \ref{prop:resolvent2}}\label{propsec21}

For $\la>0$, let $v^{(\#)}_{\la,N}$ be the solution of the resolvent equation  
\begin{equation}\label{eq:canonical}
(\la - L_r^{(\#)}) v^{(\#)}_{\la,N}=\sum_{x} j_{x,x+1}^a
\end{equation}
for $\#=0,i,ii$. Then, $\tilde{D}^{(\#)}_N(\lambda)=E_{N,\b}[ v^{(\#)}_{\la,N} \ j^a_{0,1}]$. 

Unlike the microcanonical case, it is not easy to solve the equation (\ref{eq:canonical}) directly. So, we consider the associated resolvent equation for the coordinate $(\mathbf{q},\mathbf{v}) \in \R^{4N}$ and use its solution to obtain the solution of (\ref{eq:canonical}). Let $u^{(\#)}_{\la,N}$ be the solution of the resolvent equation 
\begin{equation}\label{eq:microcanonical}
(\la - L^{(\#)}) u^{(\#)}_{\la,N}=-\frac{1}{2}\sum_{x}\sum_{j=1}^2 (q_{x+1}^j-q_x^j)(v_{x+1}^j+v_x^j).
\end{equation}
For $\#=0,i,ii$, the explicit form of $u^{(\#)}_{\la,N}$ is given in Lemmas \ref{lem:resolvent1},\ref{lem:resolvent2} and \ref{lem:resolvent3} respectively.
From Proposition \ref{reduction} and Lemma \ref{lem:derivative0}, 
\[
(\la - L_r^{(\#)}) v^{(\#)}_{\la,N,*} = -\frac{1}{2}\sum_{x}\sum_{j=1}^2 (r_x^j-\bar{r}^j)(v_{x+1}^j+v_x^j) = \sum_{x} j_{x,x+1}^a +N \sum_{j=1}^2 \bar{r}^j\bar{v}^j
\]
where $v^{(\#)}_{\la,N,*}(r,v) = u^{(\#)}_{\la,N} (\Phi (r,v))$, $\bar{r}^j=\frac{1}{N}\sum_{x \in \Z_N} r_x^j$ and $\bar{v}^j=\frac{1}{N}\sum_{x \in \Z_N} v_x^j$.

Let $ v^{(\#)}_{\la,N,**}$ be the solution of the resolvent equation  
\[
(\la - L_r^{(\#)}) v^{(\#)}_{\la,N,**}=N \sum_{j=1}^2 \bar{r}^j\bar{v}^j.
\]
Then, we have $v^{(\#)}_{\la,N}= v^{(\#)}_{\la,N,*} - v^{(\#)}_{\la,N,**}$. 

By direct computations, we have
\begin{equation*}
v^{(\#)}_{\la,N,**} = 
\begin{cases}
 \frac{N}{\la} \sum_{j=1}^2 \bar{r}^j\bar{v}^j \quad \text{for} \quad \#=0, \\
 -N ( \bar{v}^1 \frac{\la \bar{r}^1-B\bar{r}^2}{\la^2+B^2}+ \bar{v}^2 \frac{B \bar{r}^1 + \la \bar{r}^2}{\la^2+B^2}) \quad \text{for} \quad \#=i, \\
 \end{cases}
\end{equation*}
and 
\begin{align*}
v^{(\#)}_{\la,N,**} = 
 -\frac{N}{\la(\la^2+4\ga \la +4+B^2)} \big( (\la^2+4\ga \la +4) \bar{r}^1 \bar{v}^1-B\la \bar{r}^2 \check{v}^1  \\
 +  B\la\bar{r}^1 \check{v}^2 +(\la^2+4\ga \la +4)\bar{r}^2\bar{v}^2 +2B (\bar{r}^1\check{r}^2- \bar{r}^2\check{r}^1) \big)
\end{align*}
for $\#=ii$ where $\check{r}^j=\frac{1}{N}\sum_{x \in \Z_N}(-1)^xr_x^j$ and $\check{v}^j=\frac{1}{N}\sum_{x \in \Z_N}(-1)^xv_x^j$. For any $j,k=1,2$, we have $E_{N,\b}[\bar{r}^j\bar{v}^k j^a_{0,1}]=-\delta_{j,k}E_{N,\b}[\bar{r}^jr^j_0]E_{N,\b}[\bar{v}^jv^j_0]=-\delta_{j,k}\frac{1}{N^2\beta^2}$. Hence, 
\[
\lim_{N \to \infty} E_{N,\b}[v^{(\#)}_{\la,N,**} \ j^a_{0,1}]=0
\]
for $\#=0,i$. Similarly, we can also check that 
\[
\lim_{N \to \infty} E_{N,\b}[v^{(ii)}_{\la,N,**} \ j^a_{0,1}]=0.
\]
Therefore, $E_{N,\b}[ v^{(\#)}_{\la,N,*} \ j^a_{0,1}]$ is the only term which contributes to the limit of $\tilde{D}^{(\#)}_N(\lambda)$.

\begin{rem}\label{generalmeas}
Under general canonical measures $\mu_{N,\b,\tau}$, 
\begin{align*}
\lim_{N \to \infty}E_{N,\b,\tau}[v^{(0)}_{\la,N,**} \ j^a_{0,1}]  &=-\frac{1}{\la}\frac{\tau_1^2+\tau_2^2}{2\beta}, \\
\lim_{N \to \infty} E_{N,\b,\tau}[v^{(i)}_{\la,N,**} \ j^a_{0,1}] &=-\frac{\la}{\la^2+B^2}\frac{\tau_1^2+\tau_2^2}{2\beta}, \\ 
\lim_{N \to \infty} E_{N,\b,\tau}[v^{(0)}_{\la,N,**} \ j^a_{0,1}] &=-\frac{\la^2+4\ga \la +4}{\la(\la^2+4\ga \la +4+B^2)}\frac{\tau_1^2+\tau_2^2}{2\beta}
\end{align*}
and so the term $E_{N,\b,\tau}[v^{(\#)}_{\la,N,**}j^a_{0,1}]$ does not vanish in the limit. On the other hand, the behavior of the term $E_{N,\b,\tau}[v^{(\#)}_{\la,N,*}j^a_{0,1}]$ does not depend on $\tau$, which we can see from the argument below. Therefore, by the inverse Laplace transform, we can see that the current correlation $D_{\infty}^{(\#)}(t)$ does not decay under the dynamics (0) and (ii). On the other hand, under the dynamics (i) the term $E_{N,\b,\tau}[v^{(i)}_{\la,N,**} \ j^a_{0,1}]$ is an oscillation term and so the asymptotic behavior of the Green-Kubo integral does not depend on $\tau$.
\end{rem}

We study the term $E_{N,\b}[ v^{(\#)}_{\la,N,*} \ j^a_{0,1}]$ by the same strategy in the last section. 
First, let us consider the uniform case. From Lemma \ref{lem:resolvent2} in Appendix \ref{app:resolvent}, we have 
\begin{align*}
u^{(i)}_{\la,N} & =\sum_{x,y \in \Z_N} (g^1_{\la,N}(x-y)q_{x}^1 q_{y}^2+g^2_{\la,N}(x-y)(q_{x}^1 v_{y}^1+q_{x}^2 v_{y}^2) \\
& +g^3_{\la,N}(x-y)(q_{x}^1 v_{y}^2-q_{x}^2 v_{y}^1) +g^4_{\la,N}(x-y)v_{x}^1 v_{y}^2)  
\end{align*}
where $(g^1_{\la,N},g^2_{\la,N},g^3_{\la,N},g^4_{\la,N})$ is the solution of (\ref{eq:forg1-4}). As already discussed,
$v^{(i)}_{\la,N,*}$ is obtained from $u^{(i)}_{\la,N}$ by replacing $q_x^j$ by $-\sum_{y=x}^{N} (r^j_y-\bar{r}^j)$. 
Since the measure $\mu_{N,\beta}$ is product, it is easy to see that 
\begin{align*}
E_{N,\b}[ v^{(i)}_{\la,N,*} \ j^a_{0,1}] & = \sum_{x,y \in \Z_N} g^2_{\la,N}(x-y)E_{N,\b}[((-\sum_{z=x}^{N} (r^1_z-\bar{r}^1)) v_{y}^1+(-\sum_{z=x}^{N} (r^2_z-\bar{r}^2)) v_{y}^2)  \ j^a_{0,1}] \nonumber \\
& = \sum_{x,y \in \Z_N} g^2_{\la,N}(x-y)E_{N,\b}[\sum_{z=x}^{N} (r^1_z-\bar{r}^1) r_0]E_{N,\b} [ v_{y}^1 (v_{1}^1+v_{0}^1)]  \nonumber  \\
&= \frac{1}{\beta} \sum_{x \in \Z_N} ( g^2_{\la,N}(x) +g^2_{\la,N}(x-1))  E_{N,\b}[\sum_{z=x}^{N} (r^1_z-\bar{r}^1) r_0]  \nonumber  \\
&= \frac{1}{\beta} \sum_{x \in \Z_N}g^2_{\la,N}(x)E_{N,\b}[\big( \sum_{z=x}^{N} (r^1_z-\bar{r}^1) + \sum_{z=x+1}^{N} (r^1_z-\bar{r}^1) \big)r_0] .
\end{align*}

Let $L(x):=E_{N,\b}[\big(\sum_{z=x}^{N} (r^1_z-\bar{r}^1) + \sum_{z=x+1}^{N} (r^1_z-\bar{r}^1) \big)r_0]$ for $x \in \Z_N$. By simple computations,
\begin{align*}
L(x+1)-2L(x)+L(x-1) & =E_{N,\b}[(-r_{x+1}^1+r_{x-1}^1) r_0]= \frac{1}{\beta} (-\delta_{x,-1}+\delta_{x,1}).
\end{align*}
Since $\sum_{x \in \Z_N}g^2_{\la,N}(x)=0$, 
\begin{align*}
E_{N,\b}[ v^{(i)}_{\la,N,*} \ j^a_{0,1}] &= \frac{1}{\beta} \sum_{x \in \Z_N} g^2_{\la,N}(x)L(x) = \frac{1}{\beta} \sum_{x \in \Z_N} g^2_{\la,N}(x)(L(x)-\bar{L})
\end{align*}
where $\bar{L}=\frac{1}{N}\sum_{x}L(x)$ and by Parseval's identity,
\begin{align*}
E_{N,\b}[ v^{(i)}_{\la,N,*} \ j^a_{0,1}] = -\frac{1}{\beta^2}\frac{1}{N} \sum_{\xi \in \Z_N,  \xi \neq \mathbf{0} } \hat{g}^2_{\la,N}(\xi) \frac{i \sin (2 \frac{\pi \xi}{N}) }{2\sin^2(\frac{\pi \xi}{N})}.
\end{align*}
In this way, for $\hat{g}_{\lambda}^2(\frac{\xi}{N})=\hat{g}^2_{\la,N}(\xi)$, we have 
\[
\tilde{D}_N^{(i)}(\lambda) 
 \to  \frac{-i}{2\beta^2}\int_{[0,1]} \hat{g}_{\lambda}^2(\theta)\frac{\sin(2\pi \theta)}{\sin^2(\pi \theta)}d\theta= \frac{-2i}{\beta^2} \int_{[0,1]} \hat{g}_{\lambda}^2(\theta)\frac{\sin(2\pi \theta)}{\omega_{\theta}^2}d\theta.
 \]
The inverse Laplace transform of this limiting function has been already studied in the last section, so we conclude Proposition \ref{prop:resolvent2} for the uniform case, namely the case $\#=i$. The case $\#=0$ can be shown in the same way, so we omit details. 

Finally, we consider the alternate case. From Lemma \ref{lem:resolvent3} in Appendix \ref{app:resolvent}, we have
\begin{align*}
u^{(ii)}_{\la,N} & =\sum_{x \equiv y \mod 2}\big(h^1_{\la,N}(x-y) (-1)^y q_x^1 q_y^2+h^2_{\la,N} (x-y) (-1)^y v_x^1 v_y^2) \big) \\
& + \sum_{x,y} \big( h^3_{\la,N} (x-y)(q_x^1 v_y^1+ q_x^2 v_y^2)+ h^4_{\la,N}(x-y)  (-1)^y  (q_x^1 v_y^2- q_x^2 v_y^1)\big) 
\end{align*}
where $(h^1_{\la,N},h^2_{\la,N},h^3_{\la,N},h^4_{\la,N})$ is the solution of (\ref{eq:forg1-6 alt}). Let $\hat{h}^3_{\la,N}$ be the discrete Fourier transform of $h^3_{\la,N}$. We can apply the same argument as above to show that 
\[
\tilde{D}_N^{(ii)}(\lambda) 
 \to  \frac{-i}{2\b^2} \int_{[0,1]} \hat{h}_{\lambda}^3(\theta)\frac{\sin(2\pi \theta)}{\sin^2(\pi \theta)}d\theta= \frac{-2i}{\b^2} \int_{[0,1]} \hat{h}_{\lambda}^3(\theta)\frac{\sin(2\pi \theta)}{\omega_{\theta}^2}d\theta
 \]
if $\hat{h}^3_{\la,N}(\xi)=\hat{h}^3_{\la}(\frac{\xi}{N})$ for some function $\hat{h}^3_{\la}$ and $\hat{h}^3_{\la}(\theta) \frac{\sin(2\pi \theta)}{\omega_{\theta}^2}$ behaves well at the boundary. We prove this is the case in the rest of this subsection.

We define the discrete Fourier transform $\hat{h}^i_{\la,N,e}$ and $\hat{h}^i_{\la,N,o}$ for $i=1,2,3,4$ by 
\[
\hat{h}^i_{\la,N,e} (\xi)=\sum_{x : \text{even}} h^i_{\la,N}(x) e^{-2\pi i \frac{\xi}{N}x}, \quad \hat{h}^i_{\la,N,o} (\xi)=\sum_{x: \text{odd}} h^i_{\la,N}(x) e^{-2\pi i \frac{\xi}{N}x}
\]
where the first sum is taken over all $x \equiv 0 \mod 2$ and the second one is take over all $x \equiv 1 \mod 2$. Recall that we assume $N$ to be even for the alternate case. From the explicit expression (\ref{eq:forg1-6 alt}), we have
\begin{align*}
\hat{h}^1_{\la,N,e}(\xi)=\frac{4}{\la}(-\hat{h}^4_{\la,N,e}(\xi)-\cos(\frac{2\pi \xi}{N})\hat{h}^4_{\la,N,o}(\xi)),  \quad \hat{h}^2_{\la,N,e}(\xi)=\frac{2}{\la+4\ga}\hat{h}^4_{\la,N,e}(\xi).
\end{align*}
With these expressions, it is easy to see that $\hat{h}^i_{\la,N,e}(\xi)=\hat{h}_{\la,e}(\frac{\xi}{N})$ and $\hat{h}^i_{\la,N,o}(\xi)=\hat{h}_{\la,o}(\frac{\xi}{N})$ for $i=3,4$ where $( \hat{h}^3_{\la,o},\hat{h}^3_{\la,e}, \hat{h}^4_{\la,o},\hat{h}^4_{\la,e})$ is the solution of the following linear equation:
\begin{align*}
& \left(
\begin{array}{cccc}
\bar{\la} & -2\ga\cos(2\pi\theta) & B &  0 \\
-2\ga\cos(2\pi\theta) & \bar{\la}  &  0 &  B  \\
-B & 0 & \bar{\la}    &   2\cos(2\pi\theta)(\ga-\frac{2}{\bar{\la} +2\ga}) \\
0 & -B & 2\cos(2\pi\theta)(\ga+\frac{2}{\bar{\la} -2\ga})  &  \bar{\la} (1+\frac{8}{\bar{\la} ^2-4\ga^2})  
\end{array}
\right)
\left(
\begin{array}{c}
\hat{h}^3_{\la,o} \\
\hat{h}^3_{\la,e}  \\
\hat{h}^4_{\la,o} \\
\hat{h}^4_{\la,e} 
\end{array}
\right) \\
& = 
\left(
\begin{array}{c}
i\sin(2\pi \theta) \\
0  \\
0 \\
0
\end{array}
\right)
\end{align*}
where $\bar{\la} =\la+2\ga$. Then, by a direct computation, we have
\[
\hat{h}_{\lambda}^3(\theta)\frac{-i\sin(2\pi \theta)}{\omega_{\theta}^2} =\cos^2(\pi \theta) \frac{R(\la)}{S(\la)}
\]
where 
\begin{align}
R(\la) & =\bar{\la}((B^2+\bar{\la}^2)(8-4\ga^2+\bar{\la}^2)-8B^2) \nonumber \\
& + 2(B^2(2\bar{\la}+\ga(4-4\ga^2+\bar{\la}^2))+\ga\bar{\la}^2(8-4\ga^2+\bar{\la}^2))\cos(2\pi\theta) \nonumber \\
&+(4+4\ga^4-\ga^2(8+\bar{\la}^2))(4\bar{\la}\cos^2(2\pi\theta)+8\ga \cos^3(2\pi\theta)), \label{eq:FG} \\
S(\la)& =(B^2+\bar{\la}^2)((B^2+\bar{\la}^2)(8-4\ga^2+\bar{\la}^2)-8B^2) \nonumber \\
& +8(-B^2\ga^2(4-4\ga^2+\bar{\la}^2)+\bar{\la}^2(2+4\ga^4-\ga^2(8+\bar{\la}^2)))\cos^2(2\pi\theta) \nonumber \\
& -16\ga^2(4+4\ga^4-\ga^2(8+\bar{\la}^2))\cos^4(2\pi\theta). \nonumber
\end{align}

To simplify the notation, we introduce $Y=X-4\ga^2$. Then, we have
\begin{align*}
S(\la)& =(B^2+Y+4\ga^2)((B^2+Y+4\ga^2)(8+Y)-8B^2) \\
& +8(-B^2\ga^2(4+Y)+(Y+4\ga^2)(2-8\ga^2-\ga^2Y))(1-\sin^2(2\pi\theta)) \\
& -16\ga^2(4-8\ga^2-\ga^2Y)(1-\sin^2(2\pi\theta))^2 \\
&= Y^3+(8+2B^2+8\ga^2\sin^2(2\pi\theta))Y^2 \\
& +(B^4+8(1+\ga^2\sin^2(2\pi\theta))B^2+16\cos^2(2\pi\theta)+64\ga^2\sin^2(2\pi\theta)+16\ga^4\sin^4(2\pi\theta))Y \\
&+ 32B^2\ga^2\sin^2(2\pi\theta)+64\ga^2\sin^2(2\pi\theta) (1-\sin^2(2\pi\theta))+128\ga^4\sin^4(2\pi\theta).
\end{align*}
We call the last expression $\tilde{S}(Y)$. Since $Y = \la^2 +4\la \ga >0$, 
\[\inf_{\theta \in [0,1)}S(\la)=\inf_{\theta \in [0,1)}\tilde{S}(Y) \ge Y^3  >0.
\]
Therefore, $\cos^2(\pi \theta) \frac{R(\la)}{S(\la)}$ is continuous and bounded on $[0,1]$ and so we complete the proof of Proposition \ref{prop:resolvent2}.

\subsection{Analysis of the inverse Laplace transform}\label{inverselaplace2} 

In this subsection, we study the inverse Laplace transform of $\int_0^1 \cos^2(\pi \theta) \frac{S(\la)}{R(\la)}d\theta$ and its asymptotic behavior. First note that, 
\[
\int_0^1 \cos^2(\pi \theta)  \frac{R(\la)}{S(\la)}d\theta =2\int_{0}^{\frac{1}{2}}\cos^2(\pi \theta)  \frac{R(\la)}{S(\la)}d\theta
\]
since $R(\la)$ and $S(\la)$ are symmetric in $\theta$. Also, since $\sin^2(2\pi\theta)=\sin^2(2\pi(\frac{1}{2}-\theta))$, $\cos(2\pi\theta)=-\cos(2\pi(\frac{1}{2}-\theta))$ and $\cos(\pi\theta)=\sin(\pi(\frac{1}{2}-\theta))$,
\[
\int_{0}^{\frac{1}{2}}\cos^2(\pi \theta)  \frac{R(\la)}{S(\la)}d\theta=\int_{0}^{\frac{1}{4}}\frac{\bar{\la}R_1(\la)}{S(\la)}d\theta+\int_{0}^{\frac{1}{4}}\cos(2\pi\theta)\frac{R_2(\la)}{S(\la)}d\theta
\]
where 
\begin{align*}
R_1(\la) &=(B^2+\bar{\la}^2)(8-4\ga^2+\bar{\la}^2)-8B^2+4 (4+4\ga^4-\ga^2(8+\bar{\la}^2))\cos^2(2\pi\theta), \\
R_2(\la) & =2(B^2(2\bar{\la}+\ga(4-4\ga^2+\bar{\la}^2))+\ga\bar{\la}^2(8-4\ga^2+\bar{\la}^2))\cos(2\pi\theta) \\
 & +8\ga(4+4\ga^4-\ga^2(8+\bar{\la}^2))\cos^3(2\pi\theta).
\end{align*}
Let $\bar{R}(\la)=\bar{\la}R_1(\la)+R_2(\la)\cos(2\pi\theta)$. Then, from the above observations, 
\[
\int_0^1 \cos^2(\pi \theta)  \frac{R(\la)}{S(\la)}d\theta =2\int_0^{\frac{1}{4}}\frac{\bar{R}(\la)}{S(\la)}d\theta.
\]


To obtain the inverse Laplace transform of $\frac{\bar{R}(\la)}{S(\la)}$, we compute the partial fraction decomposition of $\frac{\bar{R}(\la)}{S(\la)}$. For this, we first study the zero points of $S(\la)$, namely the solution of $S(\la)=0$. Let $B=2\tilde{B}$ and $T(Y)=\frac{\tilde{S}(4Y)}{64}$. Then, 
by a simple computation, we have 
\begin{align*}
T(Y) & =Y^3+2(1+\tilde{B}^2+\ga_{\theta}^2)Y^2+(\tilde{B}^4+2(1+\ga_{\theta}^2)\tilde{B}^2+\cos^2(2\pi\theta)+4\ga_{\theta}^2+\ga_{\theta}^4)Y \\
& +2\tilde{B}^2\ga_{\theta}^2+\ga_{\theta}^2\cos^2(2\pi\theta)+2\ga_{\theta}^4
\end{align*}
where $\ga_{\theta}=\ga\sin(2\pi\theta)$. In particular, for $\theta=0$, $T(Y)=Y(Y+\tilde{B}^2+1)^2$. So, $T(Y)=0$ has two solutions $0, -\tilde{B}^2-1$.

\begin{lem}
Assume $\ga  \le 1$. Then, for each fixed $\theta \in (0,\frac{1}{4}]$, the third order polynomial $T(Y)=0$ has three distinct solutions
$\tilde{\a}_1(\theta),\tilde{\a}_2(\theta),\tilde{\a}_3(\theta) \in \R$ such that $0>\tilde{\a}_1(\theta)>-\ga^2> \tilde{\a}_2(\theta) > -\tilde{B}^2-1 > \tilde{\a}_3(\theta)$.
\end{lem}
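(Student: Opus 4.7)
The plan is to apply the intermediate value theorem by evaluating $T$ at the four points $Y=-\infty$, $Y=-\tilde{B}^{2}-1$, $Y=-\ga^{2}$, and $Y=0$, and showing the signs alternate as $-,+,-,+$. Since $T$ is a cubic with positive leading coefficient, $T(Y)\to -\infty$ as $Y\to -\infty$. At $Y=0$ the constant term reads
\[
T(0)=\ga_\theta^{2}\bigl(2\tilde{B}^{2}+\cos^{2}(2\pi\theta)+2\ga_\theta^{2}\bigr),
\]
which is strictly positive because $\ga_\theta=\ga\sin(2\pi\theta)>0$ on $(0,1/4]$. These two signs are routine.

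The non-trivial step is to identify two exact algebraic identities for $T$ at the remaining test points. Introducing the shorthand $u=\tilde{B}^{2}+1$, $v=\ga_\theta^{2}=\ga^{2}\sin^{2}(2\pi\theta)$ and $w=\cos^{2}(2\pi\theta)$, so that $v=\ga^{2}(1-w)$, I would expand $T(-u)$ and $T(-\ga^{2})$ as polynomials in these auxiliary variables and hope for cancellations yielding perfect-square structure. Concretely, the target identities are
\begin{align*}
T(-\ga^{2}) & = -\ga^{2}\bigl[\tilde{B}^{2}+(1-\ga^{2})\cos^{2}(2\pi\theta)\bigr]^{2},\\
T(-\tilde{B}^{2}-1) & = \sin^{2}(2\pi\theta)\Bigl[\,1+\tilde{B}^{2}-\ga^{2}-\ga^{2}\sin^{2}(2\pi\theta)\bigl(1-\ga^{2}+\tilde{B}^{2}\ga^{2}\bigr)\Bigr].
\end{align*}
Once these are established, the first identity immediately yields $T(-\ga^{2})<0$ for $\ga\in(0,1]$ and $B\neq 0$. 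For the second, the bracket is affine in $\sin^{2}(2\pi\theta)\in[0,1]$, so it suffices to check non-negativity at the two endpoints: at $\sin^{2}(2\pi\theta)=0$ the bracket equals $1+\tilde{B}^{2}-\ga^{2}\geq \tilde{B}^{2}>0$, while at $\sin^{2}(2\pi\theta)=1$ it simplifies to $(1-\ga^{2})^{2}+\tilde{B}^{2}(1-\ga^{2})(1+\ga^{2})\geq 0$; both use $\ga\leq 1$.

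With the four sign evaluations in hand, the intermediate value theorem produces a real root in each of the three open intervals $(-\infty,-\tilde{B}^{2}-1)$, $(-\tilde{B}^{2}-1,-\ga^{2})$ and $(-\ga^{2},0)$; since $T$ is a cubic, these three roots exhaust its zero set, and because they lie in disjoint intervals they are automatically distinct. Labelling them $\tilde{\a}_{3}<\tilde{\a}_{2}<\tilde{\a}_{1}$ produces the claimed ordering.

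The main obstacle is guessing and verifying the two perfect-square identities above. The factorization of $T(-\ga^{2})$ is not apparent from the raw coefficients: after substituting $v=\ga^{2}(1-w)$, the $\tilde{B}$-independent part of $T(-\ga^{2})$ telescopes to $-\ga^{2}(1-\ga^{2})^{2}w^{2}$, the $\tilde{B}^{2}$-coefficient reduces (via the identity $\ga^{2}-v=\ga^{2}w$) to $-2\tilde{B}^{2}\ga^{2}(1-\ga^{2})w$, and these combine with the $\tilde{B}^{4}$ contribution $-\tilde{B}^{4}\ga^{2}$ to complete the square. Once this somewhat delicate algebra is carried out, the rest of the argument is a direct application of IVT, with the assumption $\ga\leq 1$ entering only at the $T(-\tilde{B}^{2}-1)$ step.
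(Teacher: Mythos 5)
Your proposal is correct and takes essentially the same route as the paper: the same two exact evaluations $T(-\gamma^2)=-\gamma^2\bigl(\tilde{B}^2+(1-\gamma^2)\cos^2(2\pi\theta)\bigr)^2$ and $T(-\tilde{B}^2-1)=\sin^2(2\pi\theta)\bigl[(\tilde{B}^2+1-\gamma^2)-\gamma^2(\tilde{B}^2\gamma^2+1-\gamma^2)\sin^2(2\pi\theta)\bigr]$, together with $T(Y)>0$ for $Y\ge 0$ and the cubic's behavior at $-\infty$, followed by the intermediate value theorem. The only (cosmetic) difference is how positivity of $T(-\tilde{B}^2-1)$ is argued---you check the affine bracket at the endpoints of $\sin^2(2\pi\theta)\in[0,1]$, the paper bounds the $\sin^4$ coefficient by the $\sin^2$ coefficient---and both arguments share the same borderline weakness at the single corner $\gamma=1$, $\theta=\tfrac14$, where $T(-\tilde{B}^2-1)$ in fact vanishes and only nonnegativity, not strict positivity, holds.
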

\begin{proof}
For any $\theta \in (0,\frac{1}{4}]$ and $Y \ge 0 $, obviously $T(Y) >0$. Also,
\begin{align*}
T(-\ga^2) & =-\ga^2\left( \tilde{B}^2+1-\ga^2+\sin^2(2\pi \theta)(\ga^2-1)\right)^2 <0, \\
T(-\tilde{B}^2-1) & =(\tilde{B}^2+1-\ga^2)\sin^2(2\pi\theta)-\ga^2(\tilde{B}^2\ga^2+1-\ga^2)\sin^4(2\pi\theta)  \\
 & \ge (\tilde{B}^2+1-\ga^2)\sin^2(2\pi\theta)-(\tilde{B}^2+1-\ga^2)\sin^4(2\pi\theta) >0.
\end{align*}
\end{proof}
We define $\tilde{\a}_1(0)=0,\tilde{\a}_2(0)=\tilde{\a}_3(0)=-\tilde{B}^2-1$. Then, by the continuity of the coefficients of polynomial $T(Y)$, $\tilde{\a}_i(\theta)$ are continuous on $[0,\frac{1}{4}]$. From this, we have 
\[
S(\la)=64\prod_{i=1}^3(\frac{Y}{4}-\tilde{\a}_i(\theta))=\prod_{i=1}^3(Y-4\tilde{\a}_i(\theta))=\prod_{i=1}^3(\bar{\la}^2-4\ga^2-4\tilde{\a}_i(\theta)).
\]

Next, we work on the numerator $\bar{R}(\la)$. By a direct computation, we have
\begin{align*}
\bar{R}(\la)& =\bar{\la}\{(B^2+Y+4\ga^2)(8+Y)-8B^2+ 4( 4+B^2-8\ga^2-\ga^2Y)\cos^2(2\pi\theta)\} \\
 &+(2B^2\ga(4+Y)+2\ga(Y+4\ga^2)(Y+8))\cos^2(2\pi\theta) +8\ga(4-8\ga^2-\ga^2Y)\cos^4(2\pi\theta)
\end{align*}
where we use the notation $Y=\bar{\la}^2-4\ga^2$. Let 
\[
U_1(Y)=(B^2+Y+4\ga^2)(8+Y)-8B^2+ 4( 4+B^2-8\ga^2-\ga^2Y)\cos^2(2\pi\theta),
\] 
\[
U_2(Y)=(2B^2\ga(4+Y)+2\ga(Y+4\ga^2)(Y+8))\cos^2(2\pi\theta) +8\ga(4-8\ga^2-\ga^2Y)\cos^4(2\pi\theta)
\]
so as $\bar{R}(\la)=\bar{\la}U_1(Y)+U_2(Y)$.

Now, we give the partial fraction decomposition of $\frac{\bar{R}(\la)}{S(\la)}$. Let $\a_1(\theta)^2=4\ga^2+4\tilde{\a}_1(\theta) >0$ and $\a_i(\theta)^2=-4\ga^2-4\tilde{\a}_i(\theta) >0$ for $i=2,3$ where $\a_i(\theta) > 0$ for $i=1,2,3$ and $\theta \in [0,\frac{1}{4}]$. Though we also use the notation $\a_1(\theta),\a_2(\theta)$ in Subsection \ref{propsec12} which are clearly different from the ones we have just defined, since there is no room for misunderstanding, we use the same notation here.
Also, define
\[
\beta_i(\theta)=U_1(4\tilde{\a}_i(\theta))\prod_{j \neq i} \frac{1}{4(\tilde{\a}_i(\theta)-\tilde{\a}_j(\theta))}, \quad \beta_{i+3}(\theta)=U_2(4\tilde{\a}_i(\theta))\prod_{j \neq i} \frac{1}{4(\tilde{\a}_i(\theta)-\tilde{\a}_j(\theta))}
\]
 for $\theta \in (0,\frac{1}{4}]$ and $i=1,2,3$. 

Then, for each $\theta \in (0,\frac{1}{4}]$,
\begin{align*}
\frac{\bar{\la}U_1(Y)}{S(\la)} & =\frac{\bar{\la}U_1(Y)}{\prod_{i=1}^3(Y-4\tilde{\a}_i(\theta))} =\bar{\la}\sum_{i=1}^3\frac{\beta_i(\theta)}{Y-4\tilde{\a}_i(\theta)} \\
&= \frac{\beta_1(\theta)}{2}\frac{1}{\bar{\la}-\a_1(\theta)}+\frac{\beta_1(\theta)}{2} \frac{1}{\bar{\la}+ \a_1(\theta)} +\beta_2(\theta) \frac{\bar{\la}}{\bar{\la}^2+\a_2(\theta)^2}+\beta_3(\theta) \frac{\bar{\la}}{\bar{\la}^2+\a_3(\theta)^2}
\end{align*}
and
\begin{align*}
\frac{U_2(Y)}{S(\la)} & =\frac{U_2(Y)}{\prod_{i=1}^3(Y-4\tilde{\a}_i(\theta))} =\sum_{i=1}^3\frac{\beta_{i+3}(\theta)}{Y-4\tilde{\a}_i(\theta)} \\
&= \frac{\beta_4(\theta)}{2\a_1(\theta)}\frac{1}{\bar{\la}-\a_1(\theta)}-\frac{\beta_4(\theta)}{2\a_1(\theta)} \frac{1}{\bar{\la}+ \a_1(\theta)} +\beta_5(\theta) \frac{1}{\bar{\la}^2+\a_2(\theta)^2}+\beta_6(\theta) \frac{1}{\bar{\la}^2+\a_3(\theta)^2}.
\end{align*}
Therefore, we have 
\begin{align*}
& \mathcal{L}^{-1}\left[ \int_0^{\frac{1}{4}} \frac{\bar{\la}U_1(Y)}{S(\la)}d\theta \right] (t) \\
& = \int_0^{\frac{1}{4}}  \mathcal{L}^{-1}\left[ \frac{\beta_1(\theta)}{2} \frac{1}{\bar{\la}-\a_1(\theta)}+\frac{\beta_1(\theta)}{2} \frac{1}{\bar{\la}+ \a_1(\theta)} +\beta_2(\theta) \frac{\bar{\la}}{\bar{\la}^2+\a_2(\theta)^2}+\beta_3(\theta) \frac{\bar{\la}}{\bar{\la}^2+\a_3(\theta)^2} \right] (t) d\theta \\
& = \exp(-2\ga t) \int_0^{\frac{1}{4}} \left( \frac{\b_1(\theta) }{2} \{ \exp(\a_1(\theta)t) + \exp(-\a_1(\theta)t) \}  + \b_2(\theta) \cos ( \a_2(\theta)t) +  \b_3(\theta) \cos (\a_3(\theta)t) \right) d\theta \\
\end{align*}
and
\begin{align*}
& \mathcal{L}^{-1}\left[ \int_0^{\frac{1}{4}} \frac{U_2(Y)}{S(\la)}d\theta \right] (t) \\
& = \int_0^{\frac{1}{4}}  \mathcal{L}^{-1} \left[ \frac{\beta_4(\theta)}{2\a_1(\theta)} \frac{1}{\bar{\la}-\a_1(\theta)}-\frac{\beta_4(\theta)}{2\a_1(\theta)} \frac{1}{\bar{\la}+ \a_1(\theta)} +\frac{\beta_5(\theta)}{\a_2(\theta)} \frac{\a_2(\theta)}{\bar{\la}^2+\a_2(\theta)^2}+\frac{\b_6(\theta)}{\a_3(\theta)} \frac{\a_3(\theta)}{\bar{\la}^2+\a_3(\theta)^2} \right] (t) d\theta \\
& = \exp(-2\ga t) \int_0^{\frac{1}{4}}\left( \frac{\b_4(\theta) }{2\a_1(\theta)} \{ \exp(\a_1(\theta)t) - \exp(-\a_1(\theta)t) \}  + \frac{\b_5(\theta)}{\a_2(\theta)} \sin ( \a_2(\theta)t) +  \frac{\b_6(\theta)}{\a_3(\theta)} \sin (\a_3(\theta)t) \right) d\theta. \\
\end{align*}

Since $\beta_1(0):=\lim_{\theta \to 0} \beta_1(\theta)=\frac{4(B^2+4)}{(B^2+4)^2} < \infty$, $\beta_1(\theta)$ is continous on $[0,\frac{1}{4}]$ and 
\[
|\exp(-2\ga t) \int_0^{\frac{1}{4}} \frac{\b_1(\theta) }{2} \exp(-\a_1(\theta)t) d\theta | \le C\exp(-2\ga t)
\]
for some positive constant $C$.

In the same way, 
\[
|\exp(-2\ga t) \int_0^{\frac{1}{4}} \frac{\b_4(\theta) }{2\a_1(\theta)} \exp(-\a_1(\theta)t) d\theta | \le C\exp(-2\ga t)
\]
for some positive constant $C$.

Next, we consider the term
\[
\int_0^{\frac{1}{4}} \big( \b_2(\theta) \cos ( \a_2(\theta)t) +  \b_3(\theta) \cos (\a_3(\theta)t) \big) d\theta.
\]

\begin{lem}
There exists a constant $C>0$ such that
\[\sup_{\theta \in [0,\frac{1}{4}]}|\b_2(\theta) \cos ( \a_2(\theta)t) +  \b_3(\theta) \cos (\a_3(\theta)t) | \le C t.
\]
\end{lem}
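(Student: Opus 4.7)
The main idea is that although $\b_2(\theta)$ and $\b_3(\theta)$ each blow up like $(\tilde{\a}_2(\theta)-\tilde{\a}_3(\theta))^{-1}$ as $\theta \to 0$ (since $\tilde{\a}_2(0) = \tilde{\a}_3(0) = -\tilde{B}^2 - 1$), this pole must cancel in the combination $\b_2\cos(\a_2 t) + \b_3\cos(\a_3 t)$, which is a divided difference in disguise. Defining
\[
F(y,\theta,t) := \frac{U_1(4y,\theta)}{y - \tilde{\a}_1(\theta)}\cos\bigl(\sqrt{-4\ga^2-4y}\;t\bigr),
\]
the explicit formulas for $\b_2$ and $\b_3$ give
\[
\b_2(\theta)\cos(\a_2(\theta)t) + \b_3(\theta)\cos(\a_3(\theta)t) = \frac{1}{16}\cdot \frac{F(\tilde{\a}_2(\theta),\theta,t) - F(\tilde{\a}_3(\theta),\theta,t)}{\tilde{\a}_2(\theta)-\tilde{\a}_3(\theta)}.
\]

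First I would apply the mean value theorem in the $y$ variable (with $\theta$ and $t$ fixed) to rewrite the right-hand side as $\tfrac{1}{16}\partial_y F(y^*,\theta,t)$ for some $y^*$ between $\tilde{\a}_2(\theta)$ and $\tilde{\a}_3(\theta)$. A direct differentiation gives two summands: one from differentiating the rational prefactor, which is bounded uniformly in $t$, and one of the form
\[
\frac{U_1(4y,\theta)}{y - \tilde{\a}_1(\theta)}\cdot\frac{2t\sin(\sqrt{-4\ga^2-4y}\;t)}{\sqrt{-4\ga^2-4y}},
\]
which carries the only $t$-dependence and is $O(t)$ provided both denominators are bounded away from zero uniformly on the domain.

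The main obstacle is precisely this uniform separation from the singular locus $\{y = \tilde{\a}_1(\theta)\} \cup \{y = -\ga^2\}$. The first one is easy: $\tilde{\a}_1 > -\ga^2 > \tilde{\a}_2 \ge y^*$ for $\theta \in (0,\tfrac{1}{4}]$ by the preceding lemma, and $\tilde{\a}_1(0) = 0$, $\tilde{\a}_2(0) = -\tilde{B}^2 - 1$ at the endpoint, so continuity and compactness give $|y^* - \tilde{\a}_1(\theta)| \ge c > 0$. For the radicand, the computation from the proof of the preceding lemma,
\[
T(-\ga^2) = -\ga^2\bigl(\tilde{B}^2 + 1 - \ga^2 + \sin^2(2\pi\theta)(\ga^2-1)\bigr)^2,
\]
is strictly negative on $[0,\tfrac{1}{4}]$ under the hypothesis $\ga \le 1$ (together with $B\ne 0$, so that $\tilde{B}^2 + 1 > \ga^2$ at $\theta = 0$). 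Consequently $-\ga^2$ is never a root of $T(Y)$ on this interval, and continuity of the three roots $\tilde{\a}_i(\theta)$ forces a uniform gap $-4\ga^2 - 4y^* \ge \delta > 0$.

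Combining these bounds gives $|\partial_y F(y^*,\theta,t)| \le 16C t$ uniformly in $\theta$, which yields the claim. The role of the standing assumption $\ga \le 1$ is precisely to prevent a resonance $T(-\ga^2) = 0$ at some $\theta \in (0,\tfrac{1}{4}]$; heuristically, a crossing $\tilde{\a}_2(\theta) \to -\ga^2$ would merge the oscillatory mode $\cos(\a_2 t)$ with the exponential mode $\exp(\pm\a_1 t)$ and destroy the $O(t)$ bound.
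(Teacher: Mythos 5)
Your proof is correct, and it rests on the same cancellation mechanism as the paper's: the only danger is at $\theta=0$, where $\tilde{\a}_2$ and $\tilde{\a}_3$ merge, and the combination $\b_2\cos(\a_2 t)+\b_3\cos(\a_3 t)$ is exactly a divided difference, so the merging root is harmless and the factor $t$ comes from differentiating the cosine. The execution differs, though, in a way worth noting. The paper reduces the claim to $\limsup_{\theta\to 0}$, splits $U_1=V_1+V_2\cos^2(2\pi\theta)$ so as to work with difference quotients of the explicit functions $f_{i,t}(x)=V_i(-x^2-4\ga^2)\cos(xt)/(4\ga^2+x^2)$ in the frequency variable $x=\a_i(\theta)$, and computes the limit as $-\tfrac{1}{4\sqrt{B^2+4-4\ga^2}}\sum_i f'_{i,t}$ at the common limit point, bounding $|f'_{i,t}|\le Ct$. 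You instead keep the divided difference in the root variable $y=\tilde{\a}_i(\theta)$, apply the mean value theorem for every $\theta\in(0,\tfrac14]$, and control $\partial_yF(y^*,\theta,t)$ uniformly via two separation estimates (distance of $y^*$ from $\tilde{\a}_1(\theta)$ and from the branch point $-\ga^2$), both of which follow from the ordering in the preceding lemma together with continuity of the roots and the endpoint values $\tilde{\a}_1(0)=0$, $\tilde{\a}_2(0)=\tilde{\a}_3(0)=-\tilde{B}^2-1$ (your check $T(-\ga^2)<0$ is where $\ga\le 1$, together with $B\neq0$, enters, exactly as in the paper). What your route buys is a genuinely uniform-in-$\theta$ bound in one stroke, whereas the paper's reduction to the behavior at $\theta\to0$ leaves the (easy, but unstated) uniformity away from $\theta=0$ and near $\theta=0$ implicit; what it costs is the extra bookkeeping of the two separation constants. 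One shared cosmetic point: both arguments really produce a bound of the form $C(1+t)$ rather than $Ct$ literally down to $t=0$, which is all that is needed since the estimate is only used against the factor $e^{-2\ga t}$.
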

\begin{proof}
We only need to show that for some $C>0$,
\[
\limsup_{ \theta \to 0} |\b_2(\theta) \cos ( \a_2(\theta)t) +  \b_3(\theta) \cos (\a_3(\theta)t) | \le C t.
\]
By definition,
\begin{align*}
& \b_2(\theta) \cos ( \a_2(\theta)t) +  \b_3(\theta) \cos (\a_3(\theta)t)  \\
& =\frac{1}{\a_3(\theta)^2-\a_2(\theta)^2} \left(\frac{U_1(-\a_2(\theta)^2-4\ga^2)\cos ( \a_2(\theta)t)}{\a_1(\theta)^2+\a_2(\theta)^2} -\frac{U_1(-\a_3(\theta)^2-4\ga^2)\cos ( \a_3(\theta)t)}{\a_1(\theta)^2+\a_3(\theta)^2} \right) \\
&= \frac{1}{\a_3(\theta)^2-\a_2(\theta)^2} \left(\frac{V_1(-\a_2(\theta)^2-4\ga^2)\cos ( \a_2(\theta)t)}{\a_1(\theta)^2+\a_2(\theta)^2} -\frac{V_1(-\a_3(\theta)^2-4\ga^2)\cos ( \a_3(\theta)t)}{\a_1(\theta)^2+\a_3(\theta)^2} \right)  \\
& + \frac{\cos^2(2\pi\theta)}{\a_3(\theta)^2-\a_2(\theta)^2} \left(\frac{V_2(-\a_2(\theta)^2-4\ga^2)\cos ( \a_2(\theta)t)}{\a_1(\theta)^2+\a_2(\theta)^2} -\frac{V_2(-\a_3(\theta)^2-4\ga^2)\cos ( \a_3(\theta)t)}{\a_1(\theta)^2+\a_3(\theta)^2} \right)
\end{align*}
where $V_1(Y)=(B^2+Y+4\ga^2)(8+Y)-8B^2$ and $V_2(Y)=4(4+B^2-8\ga^2-\ga^2Y)$. Let 
\[
f_{i,t}(x)=\frac{V_i(-x^2-4\ga^2)\cos(xt)}{4\ga^2+x^2}
\] for $i=1,2$. Then, since $\lim_{\theta \to 0}\a_1(\theta)=2\ga$
\begin{align*}
& \lim_{ \theta \to 0} \big(\b_2(\theta) \cos ( \a_2(\theta)t) +  \b_3(\theta) \cos (\a_3(\theta)t) \big) \\
& =\lim_{ \theta \to 0} \big( \frac{-1}{\a_2(\theta)+\a_3(\theta)} \frac{f_{1,t}(\a_2(\theta))-f_{1,t}(\a_3(\theta))}{\a_2(\theta)-\a_3(\theta)}  + \frac{-\cos^2(2\pi\theta)}{\a_2(\theta)+\a_3(\theta)} \frac{f_{2,t}(\a_2(\theta))-f_{2,t}(\a_3(\theta))}{\a_2(\theta)-\a_3(\theta)}\big).
\end{align*}
Applying $\lim_{\theta \to 0}\a_2(\theta) =\lim_{\theta \to 0}\a_3(\theta)=2\sqrt{B^2+4-4\ga^2} >0$, we have
\[
\lim_{ \theta \to 0} \big(\b_2(\theta) \cos ( \a_2(\theta)t) +  \b_3(\theta) \cos (\a_3(\theta)t) \big)=\frac{-1}{4\sqrt{B^2+4-4\ga^2}}\sum_{i=1}^2f'_{i,t}(2\sqrt{B^2+4-4\ga^2}).
\]
From the explicit expression of $f_{i,t}$ for $i=1,2$, $|f'_{i,t}(2\sqrt{B^2+4-4\ga^2})| \le Ct$ for some $C>0$, and hence we complete the proof.
\end{proof}

From the above lemma, we have
\[
 \left| \exp(-2\ga t) \int_0^{\frac{1}{4}} \left(\b_2(\theta) \cos ( \a_2(\theta)t) +  \b_3(\theta) \cos (\a_3(\theta)t) \right) d\theta \right| \le C t \exp(-2\ga t)
\]
for some positive constant $C$. 

In the same way, we have
\[
 \left| \exp(-2\ga t) \int_0^{\frac{1}{4}} \left( \frac{\b_5(\theta)}{\a_2(\theta)} \sin ( \a_2(\theta)t) +  \frac{\b_6(\theta)}{\a_3(\theta)} \sin (\a_3(\theta)t) \right) d\theta \right| \le C t \exp(-2\ga t)
\]
for some positive constant $C$. 

Finally, we study the term 
\[
 \int_0^{\frac{1}{4}}\left( \b_1(\theta) +\frac{\b_4(\theta)}{\a_1(\theta)}\right) \exp((-2\ga+\a_1(\theta))t) d\theta.
\]

Since $\a_1(\theta) < 2\ga$ for $\theta \neq 0$, we only need to consider its behavior around $\theta=0$. For this, we study the behavior of $\tilde{\a}_1(\theta) \le0$ which satisfies $\tilde{\a}_1(0)=0$ and $T(\tilde{\a}_1(\theta))=0$. By expanding $T(c\sin^2(2\pi\theta))$ around $\theta=0$, we have
\begin{equation}\label{kappaasym}
\lim_{\theta \to 0}\frac{\tilde{\a}_1(\theta)}{\sin^2(2\pi\theta)}=-\frac{\ga^2(2\tilde{B}^2+1)}{(\tilde{B}^2+1)^2}=-\frac{8\ga^2(B^2+2)}{(B^2+4)^2}.
\end{equation}
Namely, $-4\tilde{\a}_1(\theta)=4\ga^2-\a_1(\theta)^2 = O(\theta^2)$ as $\theta \to 0$. Hence $2\ga-\a_1(\theta) \sim O(\theta^2)$ as $\theta \to 0$.
Also, by a direct computation,
\begin{equation}\label{kappaasym2}
\lim_{\theta \to 0}\left( \b_1(\theta) +\frac{\b_4(\theta)}{\a_1(\theta)}\right) =\frac{8}{B^2+4} \neq 0.
\end{equation}
Therefore,
\[
 \int_0^{\frac{1}{4}} \left( \b_1(\theta) +\frac{\b_4(\theta)}{\a_1(\theta)}\right) \exp \big((-2\ga+\a_1(\theta))t\big) d\theta \sim t^{-1/2}.
\]

\begin{rem}\label{ga>1case}
If $\ga >1$, $T(Y)=0$ may have a multiple root for some $\theta \neq 0$. Even for such a case, we expect the asymptotic behavior of the thermal conductivity is same as the case for $\ga \le 1$ by the following reason. First, for general $\ga>0$, $\tilde{\a}_i(\theta), \ i=1,2,3$, the solutions of $T(Y)=0$, are all continuous and if $\tilde{\a}_i(\theta) \in \R$, then $\tilde{\a}_i(\theta) <0$ for $\theta \in (0,\frac{1}{4}]$. Moreover, $\tilde{\a}_1(\theta) \in \R$ for small enough $\theta$ and the asymptotic behavior of $\tilde{\a}_1(\theta)$ as $\theta \to 0$, which is given at (\ref{kappaasym}), also holds. Namely, among $6$ poles of $\frac{\bar{R}(\la)}{S(\la)}$, there is just one pole which can contribute to the divergence of the thermal conductivity. Since the exponent is determined by the asymptotic behavior (\ref{kappaasym}) and (\ref{kappaasym2}) which both hold for general $\gamma >0$, we should have $D^{(ii)}(t) \sim t^{-\frac{1}{2}}$ in general. However, a rigorous argument will be too complicated, so we do not pursue it here. 
\end{rem}

\section*{Acknowledgement}

The authors express their sincere thanks to Herbert Spohn and Stefano Olla for insightful discussions, and to Shuji Tamaki for careful reading of the manuscript and helpful comments. The authors also thank anonymous referees for their constructive comments that significantly improve the manuscript. KS was supported by JSPS Grants-in-Aid for Scientific Research No. JP26400404 and No. JP16H02211. MS was supported by JSPS Grant-in-Aid for Young Scientists (B) No. JP25800068.

\appendix

\section{Convergence of functions and Laplace transform}\label{app:convergence}

\begin{prop}\label{prop:ascoli}
Suppose a set of functions $f_N:[0,\infty) \to \R$ indexed by $\N$ is uniformly bounded and equicontinuous. Moreover, assume that for any $\lambda>0$, 
\begin{align*}
\lim_{N \to \infty}\int_0^{\infty} e^{-\lambda t} f_N(t)dt 
\end{align*}
exists. Then, there exists a function $f_{\infty}:[0,\infty) \to \R$ such that $f_N$ converges compactly to $f_{\infty}$, namely the convergence is uniform on each compact subset of $[0,\infty)$, and also
\begin{align*}
\lim_{N \to \infty}\int_0^{\infty} e^{-\lambda t} f_N(t)dt =\int_0^{\infty} e^{-\lambda t} f_{\infty}(t)dt.
\end{align*}
\end{prop}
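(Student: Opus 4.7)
\textbf{Proof plan for Proposition \ref{prop:ascoli}.}

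The plan is a standard Arzelà--Ascoli plus Laplace-uniqueness argument. First, for each fixed $T>0$, the restriction of $\{f_N\}$ to the compact interval $[0,T]$ is uniformly bounded and equicontinuous, so by Arzelà--Ascoli it is relatively compact in $C([0,T])$. Using a diagonal extraction along an exhaustion $T=1,2,3,\dots$, I obtain a subsequence $\{f_{N_k}\}$ that converges uniformly on every compact subset of $[0,\infty)$ to some continuous limit $f_\infty$. Since the $f_N$ are uniformly bounded by some constant $M$, the limit $f_\infty$ satisfies $\|f_\infty\|_\infty \le M$.

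Next I would upgrade pointwise convergence to convergence of Laplace transforms via dominated convergence. For every $\lambda>0$ and every $t \ge 0$, we have $|e^{-\lambda t} f_{N_k}(t)| \le M e^{-\lambda t}$, which is integrable on $[0,\infty)$, and $f_{N_k}(t) \to f_\infty(t)$ pointwise. Hence
\[
\int_0^\infty e^{-\lambda t} f_\infty(t)\,dt = \lim_{k\to\infty} \int_0^\infty e^{-\lambda t} f_{N_k}(t)\,dt,
\]
and by hypothesis the right-hand side equals the assumed limit $\tilde f_\infty(\lambda) := \lim_N \int_0^\infty e^{-\lambda t} f_N(t)\,dt$.

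Now I argue that the full sequence converges. Suppose $\{f_{N'_k}\}$ is any other subsequence. Re-running Arzelà--Ascoli plus the diagonal extraction, I can extract a further subsequence converging compactly to some continuous bounded function $g_\infty$. The same dominated convergence step forces $\int_0^\infty e^{-\lambda t} g_\infty(t)\,dt = \tilde f_\infty(\lambda)$ for all $\lambda>0$. By the uniqueness of the Laplace transform on bounded continuous functions (Lerch's theorem), $g_\infty = f_\infty$. Thus every subsequence of $\{f_N\}$ admits a further subsequence converging compactly to $f_\infty$, and a standard subsequence-of-subsequence argument (applied compact set by compact set) upgrades this to compact convergence of the entire sequence to $f_\infty$.

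The only nontrivial input is Lerch's uniqueness theorem; everything else is Arzelà--Ascoli plus dominated convergence, so I do not expect any real obstacle. One minor point to be careful about is that one must invoke Lerch for bounded continuous functions on $[0,\infty)$ (not merely for, say, $L^1$ functions), but this is classical: two bounded continuous functions with identical Laplace transforms on a right half-line coincide everywhere, because the Laplace transform restricted to an arithmetic progression $\lambda = \lambda_0 + n\delta$ determines the function via the Müntz/Stone--Weierstrass argument on $[0,1]$ after the change of variables $x = e^{-\delta t}$.
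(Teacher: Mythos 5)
Your proposal is correct and follows essentially the same route as the paper: Arzel\`a--Ascoli with a diagonal extraction to get compact convergence of a subsequence, dominated convergence to pass to the limit in the Laplace transform, and injectivity of the Laplace transform to force the whole sequence to converge. You simply make explicit two points the paper leaves implicit (the subsequence-of-subsequences argument on each compact set and the Lerch-type uniqueness for bounded continuous functions), which is a welcome but not substantively different elaboration.
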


\begin{proof}
By Arzel\'a-Ascoli theorem, if $f_N$ is restricted to each compact interval in $[0,\infty)$, there exists a subsequence ${N_k}$ such that the function $f_{N_k}$ converges in the uniform topology on this interval. Therefore, by the diagonal argument, we can construct a subsequence $\tilde{N_k}$ such that $f_{\tilde{N_k}}$ converges compactly on $[0,\infty)$. Denote its limit by $f_{\infty}$. Then, by the Lebesgue's convergence theorem,
\begin{align*}
\lim_{k \to \infty}\int_0^{\infty} e^{-\lambda t} f_{\tilde{N_k}}(t)dt =\int_0^{\infty} e^{-\lambda t} f_{\infty}(t)dt
\end{align*}
since $\{f_N\}$ is uniformly bounded. Then, by the injectivity of the Laplace transform, the original sequence $f_N$ also should converge to $f_{\infty}(t)$.
\end{proof}

\section{Resolvent equation}\label{app:resolvent}

In this section, we give an explicit solution of the resolvent equation 
\[(\la - L^{(\#)}) u_{\la,N}= \sum_{\mathbf{x}} j^a_{\mathbf{x},\mathbf{x}+\mathbf{e}_1}=\sum_{j=1}^{d^*}\sum_{\mathbf{x},\mathbf{y} \in \Z_N^d}H(\mathbf{x}-\mathbf{y})\mathbf{q}_{\mathbf{x}}^j\mathbf{v}_{\mathbf{y}}^j
\]
for each $\la >0$ where $H(z)=\frac{1}{2}(\delta_{\mathbf{z}+\mathbf{e}_1}-\delta_{\mathbf{z}-\mathbf{e}_1})$ and $L^{(\#)}=A+BG^{(\#)}+\ga S$ for $\#=0,i,ii$. Note that $L^{(i)}$ and $G^{(i)}$ are denoted by $L$ and $G$ in Subsection \ref{microsec} respectively.

First, we concern the components for which the magnetic field does not effect. Let $g_{\la,N}:\Z_N^d \to \R$ be the solution of the equation
\begin{equation}\label{eq:forg}
\lambda g_{\la,N}(z) - \ga \Delta g_{\la,N}(z)=H(z)
\end{equation}
which satisfies $g_{\la,N}(z)=-g_{\la,N}(-z)$. The existence and the uniqueness of such a solution follows straightforwardly by the Fourier transform.

\begin{lem}\label{lem:resolvent1}
For any $j=1,2,\dots,d^*$, let $u_{\la,N,j}=\sum_{\mathbf{x},\mathbf{y} \in \Z_N^d}g_{\la,N}(\mathbf{x}-\mathbf{y})q_{\mathbf{x}}^j v_{\mathbf{y}}^j$. Then,
\[
(\la - (A+\ga S))u_{\la,N,j}=\sum_{\mathbf{x},\mathbf{y} \in \Z_N^d}H(\mathbf{x}-\mathbf{y})q_{\mathbf{x}}^j v_{\mathbf{y}}^j.
\] 
\end{lem}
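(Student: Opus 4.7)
The plan is to compute $(\lambda - (A + \gamma S))u_{\lambda,N,j}$ by applying the operator termwise to the bilinear sum $u_{\lambda,N,j} = \sum_{\mathbf{x},\mathbf{y}} g_{\lambda,N}(\mathbf{x}-\mathbf{y}) q^j_{\mathbf{x}} v^j_{\mathbf{y}}$, and then to match the result with the prescribed right-hand side $\sum_{\mathbf{x},\mathbf{y}} H(\mathbf{x}-\mathbf{y})q^j_{\mathbf{x}} v^j_{\mathbf{y}}$ using the defining equation \eqref{eq:forg} together with the antisymmetry $g_{\lambda,N}(\mathbf{z}) = -g_{\lambda,N}(-\mathbf{z})$.

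First, by the Leibniz rule for the first-order differential operator $A$, one has $A(q^j_{\mathbf{x}} v^j_{\mathbf{y}}) = v^j_{\mathbf{x}} v^j_{\mathbf{y}} + q^j_{\mathbf{x}} [\Delta q^j]_{\mathbf{y}}$. Summing against $g_{\lambda,N}(\mathbf{x}-\mathbf{y})$, the purely velocity term $\sum_{\mathbf{x},\mathbf{y}} g_{\lambda,N}(\mathbf{x}-\mathbf{y}) v^j_{\mathbf{x}} v^j_{\mathbf{y}}$ vanishes because the kernel is antisymmetric under $\mathbf{x} \leftrightarrow \mathbf{y}$ while $v^j_{\mathbf{x}} v^j_{\mathbf{y}}$ is symmetric. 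For the remaining term, a discrete summation by parts on the torus transfers the Laplacian from $q^j$ onto the kernel, yielding $\sum_{\mathbf{x},\mathbf{y}} [\Delta g_{\lambda,N}](\mathbf{x}-\mathbf{y}) q^j_{\mathbf{x}} q^j_{\mathbf{y}}$.

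Next I would evaluate $S u_{\lambda,N,j}$. Since $S$ acts only on velocities, and a swap $\mathbf{v} \mapsto \mathbf{v}^{k,\mathbf{x}',\mathbf{x}'+\mathbf{e}_a}$ can alter $v^j_{\mathbf{y}}$ only if $k=j$ and $\mathbf{y}\in\{\mathbf{x}',\mathbf{x}'+\mathbf{e}_a\}$, a short bookkeeping check gives $S(q^j_{\mathbf{x}} v^j_{\mathbf{y}}) = q^j_{\mathbf{x}} [\Delta v^j]_{\mathbf{y}}$. Another summation by parts moves the Laplacian onto $g_{\lambda,N}$, so $\gamma S u_{\lambda,N,j} = \gamma \sum_{\mathbf{x},\mathbf{y}} [\Delta g_{\lambda,N}](\mathbf{x}-\mathbf{y})\, q^j_{\mathbf{x}} v^j_{\mathbf{y}}$.

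Combining the three contributions,
\[
(\lambda - A - \gamma S)u_{\lambda,N,j} = \sum_{\mathbf{x},\mathbf{y}} \bigl(\lambda g_{\lambda,N} - \gamma \Delta g_{\lambda,N}\bigr)(\mathbf{x}-\mathbf{y})\, q^j_{\mathbf{x}} v^j_{\mathbf{y}} \;-\; \sum_{\mathbf{x},\mathbf{y}} [\Delta g_{\lambda,N}](\mathbf{x}-\mathbf{y})\, q^j_{\mathbf{x}} q^j_{\mathbf{y}}.
\]
The first sum equals $\sum_{\mathbf{x},\mathbf{y}} H(\mathbf{x}-\mathbf{y})\, q^j_{\mathbf{x}} v^j_{\mathbf{y}}$ by \eqref{eq:forg}. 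The remaining $qq$-quadratic form vanishes because the antisymmetry of $g_{\lambda,N}$ implies the antisymmetry of $\Delta g_{\lambda,N}$, whereas $q^j_{\mathbf{x}} q^j_{\mathbf{y}}$ is symmetric in $(\mathbf{x},\mathbf{y})$. No serious obstacle is expected; the only delicate point is the two summations by parts, which carry no boundary contributions on the discrete torus, and verifying that $[\Delta g_{\lambda,N}](-\mathbf{z}) = -[\Delta g_{\lambda,N}](\mathbf{z})$ follows directly from the hypothesis on $g_{\lambda,N}$.
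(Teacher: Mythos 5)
Your proposal is correct and follows essentially the same route as the paper: apply $A$ and $\gamma S$ termwise, transfer the discrete Laplacian onto the kernel by re-indexing on the torus, use the defining equation (\ref{eq:forg}) for the $qv$-term, and kill the purely symmetric $vv$- and $qq$-quadratic forms via the antisymmetry of $g_{\la,N}$ (hence of $\Delta g_{\la,N}$). The only cosmetic difference is that the paper observes $Au_{\la,N,j}=0$ outright, whereas you carry the $qq$-term to the end before discarding it by the same antisymmetry argument.
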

\begin{proof}
By direct computations,
\begin{align*}
A u_{\la,N,j} = \sum_{\mathbf{x},\mathbf{y} \in \Z_N^d}g_{\la,N}(\mathbf{x}-\mathbf{y})v_{\mathbf{x}}^j v_{\mathbf{y}}^j  +  \sum_{\mathbf{x},\mathbf{y} \in \Z_N^d}g_{\la,N}(\mathbf{x}-\mathbf{y})q_{\mathbf{x}}^j [\Delta q^j]_{\mathbf{y}} \\
= \sum_{\mathbf{x},\mathbf{y} \in \Z_N^d}g_{\la,N}(\mathbf{x}-\mathbf{y})v_{\mathbf{x}}^j v_{\mathbf{y}}^j + \sum_{\mathbf{x},\mathbf{y} \in \Z_N^d} [\Delta g_{\la,N}](\mathbf{x}-\mathbf{y})q_{\mathbf{x}}^j q^j_{\mathbf{y}} =0
\end{align*}
since $g_{\la,N}(z)=-g_{\la,N}(-z)$. Also, a simple computation shows that
\begin{align*}
\ga S u_{\la,N,j} = \ga \sum_{\mathbf{x},\mathbf{y} \in \Z_N^d}g_{\la,N}(\mathbf{x}-\mathbf{y})q_{\mathbf{x}}^j[\Delta v^j]_{\mathbf{y}} 
= \ga \sum_{\mathbf{x},\mathbf{y} \in \Z_N^d} [\Delta g_{\la,N}](\mathbf{x}-\mathbf{y})q_{\mathbf{x}}^j v_{\mathbf{y}}^j
\end{align*}
which concludes the proof. 
\end{proof}

From this lemma, we have
\[(\la - L^{(0)}) \sum_{j=1}^{d^*}u_{\la,N,j}=\sum_{j=1}^{d^*}\sum_{\mathbf{x},\mathbf{y} \in \Z_N^d}H(\mathbf{x}-\mathbf{y})\mathbf{q}_{\mathbf{x}}^j\mathbf{v}_{\mathbf{y}}^j
\]
and
\[(\la - L^{(\#)}) \sum_{j=3}^{d^*}u_{\la,N,j}=\sum_{j=3}^{d^*}\sum_{\mathbf{x},\mathbf{y} \in \Z_N^d}H(\mathbf{x}-\mathbf{y})\mathbf{q}_{\mathbf{x}}^j\mathbf{v}_{\mathbf{y}}^j
\]
for $\#=i,ii$ with $u_{\la,N,j}$ given in Lemma \ref{lem:resolvent1}. 

Next, we work on the components $j=1,2$ for the case with uniform charges.

Let $g^i_{\la,N}:\Z_N^d \to \R \  (i=1,2,3,4)$ be the solution of the simultaneous equations
\begin{align}\label{eq:forg1-4}
\begin{cases}
\lambda g^1_{\la,N}(z) - 2\Delta g^3_{\la,N}(z)=0,  \\
(\lambda -\ga \Delta) g^2_{\la,N}(z) + B g^3_{\la,N}(z)=H(z),  \\
- g^1_{\la,N}(z) - Bg^2_{\la,N}(z)+(\la-  \ga\Delta) g^3_{\la,N}(z) - \Delta g^4_{\la,N}(z)=0, \\
-2 g^3_{\la,N}(z) + (\lambda- 2\gamma\Delta) g^4_{\la,N}(z)=0 \\
\end{cases}
\end{align}
which satisfies $g^i_{\la,N}(z)=-g^i_{\la,N}(-z)$ for $i=1,2,3,4$. The existence and the uniqueness follows by the Fourier transform again.

\begin{lem}\label{lem:resolvent2}
Define $u_{\la,N}$ by
\begin{align*}
u_{\la,N} & =\sum_{\mathbf{x},\mathbf{y} \in \Z_N^d} \big ( g^1_{\la,N} (\mathbf{x}-\mathbf{y})q_{\mathbf{x}}^1 q_{\mathbf{y}}^2+g^2_{\la,N} (\mathbf{x}-\mathbf{y})(q_{\mathbf{x}}^1 v_{\mathbf{y}}^1+q_{\mathbf{x}}^2 v_{\mathbf{y}}^2) \\
& +g^3_{\la,N}(\mathbf{x}-\mathbf{y})(q_{\mathbf{x}}^1 v_{\mathbf{y}}^2-q_{\mathbf{x}}^2 v_{\mathbf{y}}^1)+g^4_{\la,N}(\mathbf{x}-\mathbf{y})v_{\mathbf{x}}^1 v_{\mathbf{y}}^2 \big).
\end{align*}
Then,
\[
(\la-L)u_{\la,N}=(\la - L^{(i)})u_{\la,N}=\sum_{\mathbf{x},\mathbf{y} \in \Z_N^d} \sum_{j=1,2}H(\mathbf{x}-\mathbf{y})q_{\mathbf{x}}^j v_{\mathbf{y}}^j.
\]
\end{lem}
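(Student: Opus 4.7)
The plan is to expand $u_{\la,N}=U_1+U_2+U_3+U_4$ according to the four tensor types ($q^1q^2$, $q^1v^1+q^2v^2$, $q^1v^2-q^2v^1$, $v^1v^2$) appearing in its definition, apply $L=A+BG+\gamma S$ term by term to each $U_i$, and match the resulting coefficients against the defining system \eqref{eq:forg1-4}. Two elementary facts will be used throughout: first, the discrete Laplacian is self-adjoint on convolution kernels, so that
\[
\sum_{\mathbf{x},\mathbf{y}\in\Z_N^d} f(\mathbf{x}-\mathbf{y})[\Delta h]_{\mathbf{x}}=\sum_{\mathbf{x},\mathbf{y}\in\Z_N^d}[\Delta f](\mathbf{x}-\mathbf{y})\,h_{\mathbf{x}};
\]
second, the kernels $g^i_{\la,N}$ are odd, so combined with the relabelling $\mathbf{x}\leftrightarrow\mathbf{y}$ any sum of the form $\sum g^i_{\la,N}(\mathbf{x}-\mathbf{y})F_{\mathbf{x}}F_{\mathbf{y}}$ vanishes.

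For the Hamiltonian piece $A$, I would check that $AU_2=0$ (the $v^jv^j$ part dies by oddness, and the $q^j[\Delta q^j]$ part dies after summation by parts), that $AU_1$ collapses into a $q^1v^2-q^2v^1$ term via an $\mathbf{x}\leftrightarrow\mathbf{y}$ swap, that $AU_4$ collapses into the same $U_3$-tensor after moving $\Delta$ onto $g^4_{\la,N}$, and that $AU_3$ produces $2[\Delta g^3_{\la,N}]\,q^1q^2+2g^3_{\la,N}\,v^1v^2$. For the magnetic piece, $GU_1=0$ and $GU_4=0$ (the latter by oddness of $g^4_{\la,N}$), while the identities $Gv^1=v^2$, $Gv^2=-v^1$ send $GU_2$ to the $U_3$-tensor with coefficient $g^2_{\la,N}$ and $GU_3$ to the $U_2$-tensor with coefficient $-g^3_{\la,N}$. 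For the noise piece, $S$ acts on a single velocity factor as the discrete Laplacian, so one summation by parts turns $\gamma SU_i$ for $i=2,3$ into $\gamma[\Delta g^i_{\la,N}]$ times the same tensor; for $\gamma SU_4$ both velocity factors carry a Laplacian, giving a factor of $2$.

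Collecting the four coefficients in $(\lambda-L)u_{\la,N}$ should then yield exactly $\lambda g^1_{\la,N}-2\Delta g^3_{\la,N}$, $(\lambda-\gamma\Delta)g^2_{\la,N}+B g^3_{\la,N}$, $-g^1_{\la,N}-B g^2_{\la,N}+(\lambda-\gamma\Delta)g^3_{\la,N}-\Delta g^4_{\la,N}$ and $-2g^3_{\la,N}+(\lambda-2\gamma\Delta)g^4_{\la,N}$, multiplying the four tensors respectively. By the defining system \eqref{eq:forg1-4} the first, third and fourth brackets vanish while the second equals $H$, giving $(\lambda-L)u_{\la,N}=\sum_{\mathbf{x},\mathbf{y}}H(\mathbf{x}-\mathbf{y})(q_{\mathbf{x}}^1v_{\mathbf{y}}^1+q_{\mathbf{x}}^2v_{\mathbf{y}}^2)$, as claimed. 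The main obstacle will be the careful sign bookkeeping in the repeated use of oddness and summation by parts; in particular, one must verify that all spurious $q^jq^j$ and $v^jv^j$ contributions genuinely cancel by symmetry, so that only the four target tensors survive.
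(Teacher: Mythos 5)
Your plan is correct and is essentially the paper's own proof: the paper likewise applies $L^{(i)}$ to each of the four tensor sums directly, uses oddness of the kernels together with the relabelling $\mathbf{x}\leftrightarrow\mathbf{y}$ and summation by parts to move $\Delta$ onto $g^i_{\la,N}$, and then matches the four resulting coefficients with the system (\ref{eq:forg1-4}); your stated outcomes of each term-by-term computation (including the signs from $Gv^1=v^2$, $Gv^2=-v^1$ and the factor $2$ in $\gamma SU_4$) agree with the paper's displayed identities. The only point to make explicit when writing it out is that the exchange noise $S$, though not a derivation, acts on each monomial as the sum of discrete Laplacians on its velocity factors because no single exchange touches two different components, which is exactly what the paper uses.
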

\begin{proof}
It follows by the direct computations:
\begin{align*}
 L^{(i)} & \big(\sum_{\mathbf{x},\mathbf{y} \in \Z_N^d}g^1_{\la,N}(\mathbf{x}-\mathbf{y})q_{\mathbf{x}}^1 q_{\mathbf{y}}^2 \big)= \sum_{\mathbf{x},\mathbf{y} \in \Z_N^d}g^1_{\la,N}(\mathbf{x}-\mathbf{y})(v_{\mathbf{x}}^1 q_{\mathbf{y}}^2 +q_{\mathbf{x}}^1 v_{\mathbf{y}}^2) \\ 
& =\sum_{\mathbf{x},\mathbf{y} \in \Z_N^d}g^1_{\la,N}(\mathbf{x}-\mathbf{y})(q_{\mathbf{x}}^1 v_{\mathbf{y}}^2-q_{\mathbf{x}}^2 v_{\mathbf{y}}^1), \\
 L^{(i)} & \big( \sum_{\mathbf{x},\mathbf{y} \in \Z_N^d}g^2_{\la,N}(\mathbf{x}-\mathbf{y})(q_{\mathbf{x}}^1 v_{\mathbf{y}}^1+q_{\mathbf{x}}^2 v_{\mathbf{y}}^2) \big)  \\
&=  \sum_{\mathbf{x},\mathbf{y} \in \Z_N^d}g^2_{\la,N}(\mathbf{x}-\mathbf{y})\{ (v_{\mathbf{x}}^1 v_{\mathbf{y}}^1+v_{\mathbf{x}}^2 v_{\mathbf{y}}^2) + (q_{\mathbf{x}}^1 [\Delta q^1]_{\mathbf{y}}+q_{\mathbf{x}}^2 [\Delta q^2]_{\mathbf{y}}) \\
& + B(q_{\mathbf{x}}^1 v_{\mathbf{y}}^2-q_{\mathbf{x}}^2 v_{\mathbf{y}}^1) + \gamma (q_{\mathbf{x}}^1 [\Delta v^1]_{\mathbf{y}}+q_{\mathbf{x}}^2 [\Delta v^2]_{\mathbf{y}}) \\
& = \sum_{\mathbf{x},\mathbf{y} \in \Z_N^d} \{ g^2_{\la,N}(\mathbf{x}-\mathbf{y})  B(q_{\mathbf{x}}^1 v_{\mathbf{y}}^2-q_{\mathbf{x}}^2 v_{\mathbf{y}}^1) +  \gamma  [\Delta g^2_{\la,N}](\mathbf{x}-\mathbf{y})(q_{\mathbf{x}}^1 v^1_{\mathbf{y}}+q_{\mathbf{x}}^2 v^2_{\mathbf{y}}) \}, \\
 L^{(i)} & \big(\sum_{\mathbf{x},\mathbf{y} \in \Z_N^d}g^3_{\la,N}(\mathbf{x}-\mathbf{y})(q_{\mathbf{x}}^1 v_{\mathbf{y}}^2-q_{\mathbf{x}}^2 v_{\mathbf{y}}^1) \big)= \sum_{\mathbf{x},\mathbf{y} \in \Z_N^d}g^3_{\la,N}(\mathbf{x}-\mathbf{y})\{ (v_{\mathbf{x}}^1 v_{\mathbf{y}}^2-v_{\mathbf{x}}^2 v_{\mathbf{y}}^1) \\
& +  (q_{\mathbf{x}}^1 [\Delta q^2]_{\mathbf{y}}-q_{\mathbf{x}}^2 [\Delta q^1]_{\mathbf{y}}) + B(-q_{\mathbf{x}}^1 v_{\mathbf{y}}^1-q_{\mathbf{x}}^2 v_{\mathbf{y}}^2) + \gamma (q_{\mathbf{x}}^1 [\Delta v^2]_{\mathbf{y}}- q_{\mathbf{x}}^2 [\Delta v^1]_{\mathbf{y}}) \} \\
& = \sum_{\mathbf{x},\mathbf{y} \in \Z_N^d} \{ 2 g^3_{\la,N}(\mathbf{x}-\mathbf{y}) v_{\mathbf{x}}^1 v_{\mathbf{y}}^2 +2 [\Delta g^3_{\la,N}](\mathbf{x}-\mathbf{y}) q_{\mathbf{x}}^1q_{\mathbf{y}}^2 \\
& -B ( q_{\mathbf{x}}^1 v_{\mathbf{y}}^1+q_{\mathbf{x}}^2 v_{\mathbf{y}}^2) +  \gamma [\Delta g^3_{\la,N}](\mathbf{x}-\mathbf{y}) (q_{\mathbf{x}}^1 v_{\mathbf{y}}^2-q_{\mathbf{x}}^2 v_{\mathbf{y}}^1) \}, \\
 L^{(i)} & \big(\sum_{\mathbf{x},\mathbf{y} \in \Z_N^d}g^4_{\la,N}(\mathbf{x}-\mathbf{y})v_{\mathbf{x}}^1 v_{\mathbf{y}}^2 \big)= \sum_{\mathbf{x},\mathbf{y} \in \Z_N^d}g^4_{\la,N}(\mathbf{x}-\mathbf{y}) ( [\Delta q^1]_{\mathbf{x}} v_{\mathbf{y}}^2 + v_{\mathbf{x}}^1[\Delta q^2]_{\mathbf{y}}  \\
&+ B ( v_{\mathbf{x}}^2 v_{\mathbf{y}}^2 - v_{\mathbf{x}}^1 v_{\mathbf{y}}^1) + \gamma ( [\Delta v^1]_{\mathbf{x}} v^2_{\mathbf{y}} + v_{\mathbf{x}}^1 [\Delta v^2]_{\mathbf{y}}) )\\
& = \sum_{\mathbf{x},\mathbf{y} \in \Z_N^d} \{ [\Delta g^4_{\la,N}](\mathbf{x}-\mathbf{y}) (q_{\mathbf{x}}^1 v_{\mathbf{y}}^2-q_{\mathbf{x}}^2 v_{\mathbf{y}}^1) + 2 \gamma [\Delta g^4_{\la,N}](\mathbf{x}-\mathbf{y}) v_{\mathbf{x}}^1v_{\mathbf{y}}^2\}.
\end{align*}
\end{proof}

Finally, we study the case with alternate charges. For this case, we take $d=1$ and $d^*=2$. We introduce 
\[(\bar{\Delta}F)(z)=\sum_{|y-z|=1} (F(y)+F(z))=F(z+1)+2F(z)+F(z-1)
\] 
for $F: \Z_N \to \R$.
Let $h^i_{\la,N}:\Z_N \to \R \  (i=1,2,3,4,5,6)$ be the solution of the simultaneous equations
\begin{align}\label{eq:forg1-6 alt}
\begin{cases}
\la h^1_{\la,N} (z) + 2 \bar{\Delta}h^4_{\la,N} (z)=0  \quad \text{for} \quad z \equiv 0 \mod 2, \\
(\lambda +4\ga) h^2_{\la,N}  (z) - 2h^4_{\la,N}  (z) =0  \quad \text{for} \quad z \equiv 0  \mod 2,  \\
(\lambda -\ga \Delta) h^3_{\la,N}  (z)  + B h^4_{\la,N}  (z) = H(z) \quad \text{for all} \quad z,\\
(\lambda + \ga \bar{\Delta}) h^4_{\la,N}  (z)  - h^1_{\la,N} (z) +2 h^2_{\la,N} (z) -B h^3_{\la,N}  (z) = 0 \quad \text{for} \quad z \equiv 0  \mod 2,  \\
(\lambda + \ga \bar{\Delta}) h^4_{\la,N}  (z)  - h^2_{\la,N} (z-1) - h^2_{\la,N} (z+1) - B h^3_{\la,N}  (z) = 0 \quad \text{for} \quad z \equiv 1  \mod 2, 
\end{cases}
\end{align}
and satisfies $h^i_{\la,N}(z)=-h^i_{\la,N}(-z)$ for $i=1,2,3,4,5,6$. The existence and the uniqueness follows by the Fourier transform again.

\begin{lem}\label{lem:resolvent3}
Define $u_{\la,N}$ by
\begin{align*}
u_{\la,N} & =\sum_{x \equiv y \mod 2}\big(h^1_{\la,N}(x-y) (-1)^y q_x^1 q_y^2+h^2_{\la,N} (x-y) (-1)^y v_x^1 v_y^2) \big) \\
& + \sum_{x,y} \big( h^3_{\la,N} (x-y)(q_x^1 v_y^1+ q_x^2 v_y^2)+ h^4_{\la,N}(x-y)  (-1)^y  (q_x^1 v_y^2- q_x^2 v_y^1)\big) .
\end{align*}
Then,
\[
(\la - L^{(ii)})u_{\la,N}=\sum_{x,y \in \Z_N} \sum_{j=1,2}H(x-y)q_x^j v_y^j.
\]
\end{lem}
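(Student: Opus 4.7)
The plan is to follow the same schema as in Lemma~\ref{lem:resolvent2}: decompose $u_{\la,N} = U_1 + U_2 + U_3 + U_4$ with
\begin{align*}
U_1 &= \sum_{x \equiv y \mod 2} h^1_{\la,N}(x-y)(-1)^y q_x^1 q_y^2, \\
U_2 &= \sum_{x \equiv y \mod 2} h^2_{\la,N}(x-y)(-1)^y v_x^1 v_y^2, \\
U_3 &= \sum_{x,y} h^3_{\la,N}(x-y)(q_x^1 v_y^1 + q_x^2 v_y^2), \\
U_4 &= \sum_{x,y} h^4_{\la,N}(x-y)(-1)^y(q_x^1 v_y^2 - q_x^2 v_y^1),
\end{align*}
and compute $L^{(ii)} U_i = A U_i + B G^{(ii)} U_i + \ga S U_i$ term by term, expanding the derivatives and then using the oddness $h^i_{\la,N}(-z) = -h^i_{\la,N}(z)$ together with the relabeling $x \leftrightarrow y$ to reduce all output to canonical form. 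The system (\ref{eq:forg1-6 alt}) is engineered exactly so that this canonical form equals $\la u_{\la,N}$ minus the source $\sum_{x,y}\sum_{j=1,2}H(x-y)q_x^j v_y^j$.

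Two structural facts organize the calculation. First, $G^{(ii)}$ carries the alternating factor $(-1)^z$ at each site; hence on the support $\{x \equiv y \mod 2\}$ where $(-1)^x = (-1)^y$, the contributions $G^{(ii)} U_1$ (trivially zero since $U_1$ is $v$-free) and $G^{(ii)} U_2$ (zero after swapping $x \leftrightarrow y$ and invoking the oddness of $h^2$) both vanish, while $G^{(ii)}$ maps $U_3$ to a $U_4$-shaped expression with kernel $B h^3$ and maps $U_4$ to a $(-1)\times U_3$-shaped expression with kernel $B h^4$. Second, whenever $A$ or $\ga S$ deposits a discrete Laplacian $\Delta_y$ onto $v_y$ or $q_y$ inside a $(-1)^y$-weighted sum, summation by parts together with $(-1)^{y\pm 1} = -(-1)^y$ converts the $\Delta$ into $-\bar{\Delta}$ on the coefficient side; this is why $\bar{\Delta}$ appears in equations 1, 4 and 5 of (\ref{eq:forg1-6 alt}), whereas the unweighted $U_3$ produces the ordinary $\Delta$ of equation 3.

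After these reductions, every output term falls into one of five buckets indexed by (bilinear type, parity of $x-y$): the $(-1)^y q^1 q^2$ bucket on $\{x \equiv y\}$ gives equation 1, the $(-1)^y v^1 v^2$ bucket on $\{x \equiv y\}$ gives equation 2, the $(q^1 v^1 + q^2 v^2)$ bucket on all $(x,y)$ (the only one that receives the source $H$, from the right-hand side of the resolvent equation) gives equation 3, and the $(-1)^y(q^1 v^2 - q^2 v^1)$ bucket splits by parity into equations 4 on $\{x \equiv y\}$ and 5 on $\{x \not\equiv y\}$. Matching coefficients in each bucket produces exactly (\ref{eq:forg1-6 alt}), so the resolvent identity $(\la - L^{(ii)}) u_{\la,N} = \sum_{x,y}\sum_{j=1,2}H(x-y)q_x^j v_y^j$ follows from the defining hypothesis on $h^1_{\la,N},\dots,h^4_{\la,N}$.

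The main obstacle is the bookkeeping that separates equations 4 and 5. The reason the $h^4$-equation must split by parity of $z$ is that $\Delta_y$ acting on a term supported on $\{x \equiv y \mod 2\}$ shifts $y$ by $\pm 1$ and thereby produces a term supported on $\{x \not\equiv y \mod 2\}$; symmetrically, the Laplacian applied to $U_3$-terms (which live on both parities) feeds into both buckets of the $(-1)^y(q^1 v^2 - q^2 v^1)$ channel. In the odd-$(x-y)$ bucket, the only way $U_1, U_2$ can contribute is through the shifted values $h^2(z\pm 1)$, producing exactly the combination $-h^2(z-1)-h^2(z+1)$ in equation 5, while in the even-$(x-y)$ bucket one recovers instead the unshifted combination $-h^1(z)+2h^2(z)$ in equation 4. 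Once this cross-sector tracking of $(-1)^y$-factors and $\Delta$-shifts is set up carefully, the remainder of the verification is mechanical and closely parallels the proof of Lemma~\ref{lem:resolvent2}.
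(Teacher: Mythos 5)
Your proposal is correct and follows essentially the same route as the paper: a direct term-by-term computation of $L^{(ii)}$ on the four bilinear pieces, using oddness of the $h^i_{\la,N}$, the relabeling $x\leftrightarrow y$, and the sign flip $(-1)^{y\pm1}=-(-1)^y$ (which turns $\Delta$ into $-\bar{\Delta}$ and splits the $h^4$-channel by parity), followed by matching coefficients against (\ref{eq:forg1-6 alt}). The paper's proof is exactly this computation, with the bucket-matching step left implicit.
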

\begin{proof}
It follows by the following direct computations. The sum over $x \equiv y$ means the sum over all pairs $(x,y)$ satisfying $x-y \equiv 0 \mod 2$ and the sum over $x \equiv y+1$ means the sum over all pairs $(x,y)$ satisfying $x-y \equiv 1 \mod 2$. 
\begin{align*}
 L^{(ii)} & \big(\sum_{x \equiv y} h^1_{\la,N}(x-y)(-1)^y q_x^1 q_y^2 \big)= (\sum_{x \equiv y} h^1_{\la,N}(x-y)(-1)^y(v_x^1 q_y^2 +q_x^1 v_y^2) \\ 
& =\sum_{x \equiv y}h^1_{\la,N}(x-y)(-1)^y (q_x^1 v_y^2-q_x^2 v_y^1), \\
 L^{(ii)} & \big(\sum_{x \equiv y}h^2_{\la,N}(x-y) (-1)^y v_x^1 v_y^2 \big)= \sum_{x \equiv y}h^2_{\la,N}(x-y) \big( (-1)^y ( [\Delta q^1]_x v_y^2 + v_x^1[\Delta q^2]_y ) \\
&+ B ((-1)^y(-1)^x v_x^2 v_y^2 - (-1)^y (-1)^y v_x^1 v_y^1) + (-1)^y \gamma ( [\Delta v^1]_x v^2_y + v_x^1 [\Delta v^2]_y) \big) \\
& = \sum_{x \equiv y} \big( -2 h^2_{\la,N}(x-y) (-1)^y (q_x^1 v_y^2-q_x^2 v_y^1) \big)\\
& + \sum_{x \equiv y+1 } \big(  (h^2_{\la,N}(x-y+1)+h^2_{\la,N}(x-y-1) ) (-1)^y (q_x^1 v_y^2-q_x^2 v_y^1) \big) \\
& - 4 \gamma \sum_{x \equiv y}   h^2_{\la,N}(x-y) (-1)^y  v_x^1v_y^2,\\
 L^{(ii)} & \big( \sum_{x,y \in \Z_N}h^3_{\la,N}(x-y)(q_x^1 v_y^1+q_x^2 v_y^2) \big)  =  \sum_{x,y \in \Z_N}h^3_{\la,N}(x-y)\big( (v_x^1 v_y^1+v_x^2 v_y^2) + (q_x^1 [\Delta q^1]_y+q_x^2 [\Delta q^2]_y) \\
& + (-1)^y B(q_x^1 v_y^2-q_x^2 v_y^1) + \gamma (q_x^1 [\Delta v^1]_y+q_x^2 [\Delta v^2]_y) \big) \\
& = \sum_{x,y \in \Z_N} \big( h^3_{\la,N}(x-y) (-1)^y B(q_x^1 v_y^2-q_x^2 v_y^1) +  \gamma  [\Delta h^3_{\la,N}](x-y)(q_x^1 v^1_y+q_x^2 v^2_y) \big), \\
 L^{(ii)} & \big(\sum_{x,y \in \Z_N^d}h^4_{\la,N}(x-y) (-1)^y (q_x^1 v_y^2-q_x^2 v_y^1) \big)= \sum_{x,y \in \Z_N^d}h^4_{\la,N}(x-y)  \big( (-1)^y  (v_x^1 v_y^2-v_x^2 v_y^1) \\
& +  (-1)^y  (q_x^1 [\Delta q^2]_y-q_x^2 [\Delta q^1]_y) + B(-q_x^1 v_y^1-q_x^2 v_y^2) + (-1)^y \gamma (q_x^1 [\Delta v^2]_y- q_x^2 [\Delta v^1]_y) \big)\\
& = \sum_{x,y \in \Z_N} \big(  h^4_{\la,N}(x-y) \{(-1)^x+(-1)^y\} v_x^1 v_y^2 - [\bar{\Delta} h^4_{\la,N}](x-y) \{(-1)^x+(-1)^y\} q_x^1q_y^2 \\
& -B ( q_x^1 v_y^1+q_x^2 v_y^2) +  \gamma [\bar{\Delta} h^4_{\la,N}](x-y) (-1)^y (q_x^1 v_y^2-q_x^2 v_y^1) \big) \\
&=  \sum_{x \equiv y } \big( 2 h^4_{\la,N}(x-y)(-1)^y v_x^1 v_y^2 - 2 [\bar{\Delta} h^4_{\la,N}](x-y) (-1)^y q_x^1q_y^2 \big)\\
& +\sum_{x, y } \big( -  B h^4_{\la,N}(x-y)  ( q_x^1 v_y^1+q_x^2 v_y^2) -  \gamma [\bar{\Delta} h^4_{\la,N}](x-y) (-1)^y (q_x^1 v_y^2-q_x^2 v_y^1) \big). 
\end{align*}
\end{proof}

\begin{lem}\label{lem:derivative0}
Let $d=1$ and $d^*=2$. Then, for $j=1,2$, we have
\[
\sum_{x=1}^N \partial_{q_x^j}F  =0
\]
if $F=u_{\la,N,1}$ or $u_{\la,N,2}$ given in Lemma \ref{lem:resolvent1}, or $F=u_{\la,N}$ given in Lemma \ref{lem:resolvent2} or \ref{lem:resolvent3}.
\end{lem}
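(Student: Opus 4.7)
The plan is to exploit a shared structural feature of all three formulas for $F$: every term contributes to $\partial_{q_x^j}F$ an expression of the form $\sum_y g(x-y) X_y$ (with $X_y$ a monomial in the remaining variables, possibly multiplied by a fixed sign such as $(-1)^y$), where $g$ is one of the odd kernels $g_{\la,N}$, $g^i_{\la,N}$, or $h^i_{\la,N}$. Upon summing over $x$ and changing the summation variable to $z=x-y$, the $x$-sum factorizes as $\bigl(\sum_{z\in I} g(z)\bigr)\cdot\bigl(\sum_y \tilde{X}_y\bigr)$ for some index set $I\subset\Z_N$. The antisymmetry $g(-z)=-g(z)$ (explicitly built into the construction of each kernel in Appendix \ref{app:resolvent}) will then force $\sum_{z\in I}g(z)=0$ provided that $I$ itself is symmetric under $z\mapsto -z$ modulo $N$; the identity will then follow immediately.

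Concretely, for $F=u_{\la,N,j}$ of Lemma \ref{lem:resolvent1}, only $\partial_{q_x^j}$ is nonzero, and its sum over $x$ equals $\bigl(\sum_z g_{\la,N}(z)\bigr)\sum_y v_y^j=0$. For $F=u_{\la,N}$ of Lemma \ref{lem:resolvent2}, the derivative $\partial_{q_x^1}$ produces contributions from the $g^1$-, $g^2$- and $g^3$-terms, and each one, after summing over $x$, factors through $\sum_z g^i_{\la,N}(z)=0$; the argument for $\partial_{q_x^2}$ is identical. Here $I=\Z_N$ is obviously symmetric under $z\mapsto-z$.

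The one point that requires actual care, and is in my view the main obstacle, is the alternate case $F=u_{\la,N}$ from Lemma \ref{lem:resolvent3}, because of the factors $(-1)^y$ and the restriction $x\equiv y\pmod 2$. For the $h^3$-term the argument is as before. For the $h^4$-term, writing $z=x-y$, the factor $(-1)^y$ pulled out of the $x$-sum converts $\sum_x\sum_y h^4(x-y)(-1)^y v_y^2$ into $\bigl(\sum_{z\in\Z_N}h^4(z)\bigr)\sum_y (-1)^y v_y^2$, which again vanishes by oddness of $h^4$. For the $h^1$-term one gets instead a sum restricted to $z$ even: using $(-1)^y=(-1)^{x-z}=(-1)^x$ when $z$ is even, one finds
\[
\sum_x\partial_{q_x^1}\!\!\sum_{x'\equiv y}\! h^1(x'-y)(-1)^y q_{x'}^1 q_y^2
=\Bigl(\sum_{z\text{ even}}h^1(z)\Bigr)\sum_y(-1)^y q_y^2 .
\]
Since $N$ is taken even in the alternate case, the set of even residues in $\Z_N$ is stable under $z\mapsto -z\bmod N$, so the antisymmetry of $h^1$ still yields $\sum_{z\text{ even}}h^1(z)=0$, and hence the claim. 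The same parity-bookkeeping handles $\partial_{q_x^2}$.

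Thus the whole proof reduces to (i) writing out $\partial_{q_x^j}F$ termwise, (ii) interchanging the sum over $x$ with the sum over $y$, (iii) noting that the parity constraint is preserved under $z\mapsto -z$ because $N$ is even in case (ii), and (iv) invoking the oddness of the kernels. I expect no additional difficulty beyond keeping the bookkeeping of the $(-1)^y$ factors straight in the alternate case.
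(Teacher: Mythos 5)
Your proposal is correct and follows essentially the same route as the paper: differentiate termwise, swap the $x$- and $y$-sums, and use that each kernel $g_{\la,N}$, $g^i_{\la,N}$, $h^i_{\la,N}$ is odd, so that $\sum_z g(z)=0$ (and $\sum_{z\ \mathrm{even}}h^1_{\la,N}(z)=0$, the even residues being stable under $z\mapsto-z$ since $N$ is even in the alternate case). Your parity bookkeeping for the $(-1)^y$ factors matches the paper's computation, so no gap remains.
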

\begin{proof}
If $F=u_{\la,N,1}$ or $u_{\la,N,2}$ given in Lemma \ref{lem:resolvent1}, then
\[
\sum_{x=1}^N \partial_{q_x^j}F  = \sum_{x,y \in \Z_N}g_{\la,N}(x-y) v_{y}^j=0
\]
since $\sum_{x}g_{\la,N}(x) =0$ for $j=1,2$.

If $F=u_{\la,N}$ given in Lemma \ref{lem:resolvent2},
\[
\sum_{x=1}^N \partial_{q_x^1}F  =\sum_{x,y \in \Z_N} \big ( g^1_{\la,N} (x-y) q_{y}^2+g^2_{\la,N} (x-y) v_{y}^1 +g^3_{\la,N}(x-y)v_{y}^2\big)=0
\]
since $\sum_{x}g^1_{\la,N}(x) =\sum_{x}g^2_{\la,N}(x)=\sum_{x}g^3_{\la,N}(x)=0$. For $j=2$, we can apply the same argument.

If $F=u_{\la,N}$ given in Lemma \ref{lem:resolvent3},
\[
\sum_{x=1}^N \partial_{q_x^1}F  =\sum_{x \equiv y \mod 2}h^1_{\la,N}(x-y) (-1)^y  q_y^2  
 + \sum_{x,y} \big( h^3_{\la,N} (x-y) v_y^1+ h^4_{\la,N}(x-y)  (-1)^y  v_y^2\big) =0
\]
since $\sum_{x:even}h^1_{\la,N}(x)=0$, $\sum_{x}h^3_{\la,N} (x)=\sum_{x}h^4_{\la,N} (x)=0$. For $j=2$, we can apply the same argument.
\end{proof}

\section{Equivalence of ensembles}\label{app:equivalence}

We list the consequence of the equivalence of ensembles used in Section 2. 
\begin{lem}\label{equivalence}
For $j=1,2,\dots, d^*$, the followings hold:\\
i) $E_{N,E}[(v_{\mathbf{0}}^j)^2] =\frac{E}{d^*}$ for any $N$, \\
ii) $E_{N,E}[(v_{\mathbf{0}}^j)^4] \to \frac{3E^2}{(d^*)^2}$ as $N \to \infty$, \\
iii) For any $\mathbf{x} \in \Z^d \setminus  \{\mathbf{0}\}$, $E_{N,E}[(v_{\mathbf{0}}^j)^2(v_{\mathbf{x}}^j)^2] \to \frac{E^2}{(d^*)^2}$  as $N \to \infty$, \\
iv) For any $\mathbf{x} \in \Z_N^d$, 
\[
E_{N,E}[(q_{\mathbf{x}}^j-q_{-\mathbf{x}}^j)(q_{\mathbf{e}_1}^j-q_{-\mathbf{e}_1}^j)(v_{\mathbf{0}}^j)^2]=\frac{1}{N^{d}} \left(\frac{E^2}{{d^*}^2}+O(N^{-d})\right) \sum_{\mathbf{\xi} \in \Z^d_N, \mathbf{\xi}\neq \mathbf{0}}\frac{\sin(2\pi  \frac{\mathbf{\xi}}{N} \cdot \mathbf{x})\sin(2\pi  \frac{\mathbf{\xi}}{N} \cdot \mathbf{e}_1) }{\sum_{k=1}^d\sin(\frac{\pi \mathbf{\xi}^k}{N})^2}.
\] 
Moreover, the term $O(N^{-d})$ does not depend on $\mathbf{x}$.  
\end{lem}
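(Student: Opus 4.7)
The proof will proceed by passing to Fourier coordinates, in which $\mu_{N,E}$ becomes the uniform measure on a high-dimensional sphere, and then invoking the standard second and fourth moment identities on that sphere. With $\hat{f}(\xi) = \sum_{\mathbf{x}}f(\mathbf{x})e^{-2\pi i \xi \cdot \mathbf{x}/N}$, Parseval and $\omega_\theta^2 = 4\sum_a \sin^2(\pi\theta^a)$ give
\begin{equation*}
\sum_{\mathbf{x}}\E_{\mathbf{x}} = \frac{1}{2N^d}\sum_{\xi \in \Z_N^d}\bigl(|\hat{\mathbf{v}}(\xi)|^2 + \omega_{\xi/N}^2 |\hat{\mathbf{q}}(\xi)|^2\bigr),
\end{equation*}
and the linear constraints $\sum_\mathbf{x}\mathbf{q}_\mathbf{x}=\sum_\mathbf{x}\mathbf{v}_\mathbf{x}=0$ become $\hat{\mathbf{q}}(0)=\hat{\mathbf{v}}(0)=0$. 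Rescaling $\psi^j_\xi = \hat{v}^j(\xi)/\sqrt{N^d}$ and $\phi^j_\xi = \omega_{\xi/N}\hat{q}^j(\xi)/\sqrt{N^d}$, and using the reality condition $\hat{f}(-\xi)=\overline{\hat{f}(\xi)}$ to pass to a real orthonormal basis, the measure $\mu_{N,E}$ is uniform on the sphere of squared radius $R^2 = 2N^dE$ in $\R^D$ with $D=2d^*(N^d-1)$.

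Part (i) is then exact: the $\psi \leftrightarrow \phi$ symmetry of the constraint gives $E_{N,E}[\sum_\mathbf{x}|\mathbf{v}_\mathbf{x}|^2] = N^dE$, and translation and component symmetries yield $E_{N,E}[(v_\mathbf{0}^j)^2] = E/d^*$. Parts (ii) and (iii) follow by writing $v_\mathbf{0}^j$ and $v_\mathbf{x}^j$ as linear functionals of the sphere variables via the inverse Fourier transform and applying the standard identity
\begin{equation*}
E_{\mathrm{sph}}[X_iX_jX_kX_l] = \frac{R^4}{D(D+2)}(\delta_{ij}\delta_{kl}+\delta_{ik}\delta_{jl}+\delta_{il}\delta_{jk}),
\end{equation*}
together with $R^2/D \to E/d^*$ as $N \to \infty$.

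The structural observation that drives (iv) is that the sphere identity above equals $D/(D+2)$ times the corresponding Gaussian four-point function computed with the matching covariance $E[X_iX_j] = (R^2/D)\delta_{ij}$, and this ratio $D/(D+2) = 1 + O(N^{-d})$ is a scalar that does not depend on $\mathbf{x}$. It is therefore enough to compute the Gaussian expectation of $(q_\mathbf{x}^j-q_{-\mathbf{x}}^j)(q_{\mathbf{e}_1}^j-q_{-\mathbf{e}_1}^j)(v_\mathbf{0}^j)^2$, in which $\hat{q}$ and $\hat{v}$ are independent and only the Wick pairing $E_g[\hat{q}\hat{q}]E_g[\hat{v}\hat{v}]$ survives; expanding with $q_\mathbf{x}^j - q_{-\mathbf{x}}^j = (2i/N^d)\sum_\xi \sin(2\pi \xi \cdot \mathbf{x}/N)\hat{q}^j(\xi)$ and the diagonal two-point function $E_g[\hat{q}^j(\xi)\hat{q}^j(-\xi)] = N^{2d}E/(d^*(N^d-1)\omega_{\xi/N}^2)$ dictated by the rescaling produces
\begin{equation*}
\frac{E^2}{(d^*)^2(N^d-1)}\sum_{\xi \neq 0}\frac{\sin(2\pi\tfrac{\xi}{N}\cdot\mathbf{x})\sin(2\pi\tfrac{\xi}{N}\cdot\mathbf{e}_1)}{\sum_{k=1}^d\sin^2(\pi\xi^k/N)}.
\end{equation*}
Multiplying by the $\mathbf{x}$-independent factor $D/(D+2)$ collects both $N^{-d}$ corrections into a single prefactor $\tfrac{1}{N^d}(E^2/(d^*)^2 + O(N^{-d}))$, which is the stated form.

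The main obstacle is the careful Fourier bookkeeping: one must implement the reality condition $\hat{f}(-\xi)=\overline{\hat{f}(\xi)}$ to single out the genuine real orthonormal sphere coordinates, verify the exact values $D=2d^*(N^d-1)$ and $R^2=2N^dE$, and confirm that the sphere fourth moment is precisely $D/(D+2)$ times the Gaussian one for \emph{every} quadruple of linear functionals; once this is in place, the universality of that scalar ratio automatically guarantees that the $O(N^{-d})$ correction in (iv) is independent of $\mathbf{x}$.
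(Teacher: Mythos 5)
Your proposal is correct and rests on the same foundation as the paper's proof: diagonalize via the discrete Fourier transform, observe that under the constraints $\sum_{\mathbf{x}}\mathbf{q}_{\mathbf{x}}=\sum_{\mathbf{x}}\mathbf{v}_{\mathbf{x}}=0$ and $\sum_{\mathbf{x}}\E_{\mathbf{x}}=N^dE$ the rescaled real and imaginary parts of the Fourier modes are uniform on a sphere of dimension $2d^*(N^d-1)$, and then read off moments; your (i)--(iii) are essentially what the paper does (it quotes the classical equivalence of ensembles for the sphere where you write the explicit fourth-moment formula). The one genuine difference is in (iv): the paper argues directly on the sphere, using its symmetries to show that only the pairings $\mathbf{\xi}'=-\mathbf{\xi}$, $\mathbf{\zeta}'=-\mathbf{\zeta}$ survive in $E_{N,E}[\tilde{q}(\mathbf{\xi})\tilde{q}(\mathbf{\xi}')\tilde{v}(\mathbf{\zeta})\tilde{v}(\mathbf{\zeta}')]$, that the surviving value is independent of $(\mathbf{\xi},\mathbf{\zeta})$, and that it equals $\frac{E^2}{(d^*)^2}+O(N^{-d})$ by the explicit $2$--$2$ sphere moment; you instead use the universal identity that every fourth moment of linear functionals on the sphere is exactly $\tfrac{D}{D+2}$ times the matching Gaussian one, and then do a Wick computation. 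Both routes use the same underlying moment facts, but your packaging makes the $\mathbf{x}$-independence of the $O(N^{-d})$ correction automatic (a single scalar prefactor), whereas the paper gets it because the surviving expectation is mode-independent. Two small points to nail down when you write it out: the even-$N$ self-conjugate modes (e.g.\ $\xi^a\in\{0,N/2\}$) must be treated as purely real coordinates, as the paper does separately, and you should keep your normalization consistent (your $R^2=2N^dE$ with $\sqrt{2}$-normalized real coordinates versus the paper's radius $\sqrt{N^dE}$ without it), since only the combination entering the covariance $E_g[|\hat{q}^j(\xi)|^2]=\frac{N^{2d}E}{d^*(N^d-1)\omega_{\xi/N}^2}$ matters --- which you have correctly.
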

This is essentially done in Lemma 7 of \cite{BBO}, but for completeness, we give a proof.
\begin{proof}
Define 
\[
\tilde{\mathbf{q}}(\mathbf{\xi})=(1-\delta(\mathbf{\xi}))\omega^N(\mathbf{\xi})\hat{\mathbf{q}}(\mathbf{\xi}), \quad \tilde{\mathbf{v}}(\mathbf{\xi})=N^{-d/2}(1-\delta(\mathbf{\xi}))\hat{\mathbf{v}}(\mathbf{\xi})
\]
where $\omega^N(\mathbf{\xi})=2N^{-d/2}\sqrt{\sum_{a=1}^d\sin^2(\frac{\pi\xi^a}{N})}$ and $\hat{\mathbf{q}},\hat{\mathbf{v}}$ are the Fourier transform of $\mathbf{q}$ and $\mathbf{v}$. The factor $1-\delta$ in the definition is due to the condition $\sum_{\mathbf{x}}\mathbf{q}_{\mathbf{x}}=\sum_{\mathbf{x}}\mathbf{v}_{\mathbf{x}}=\mathbf{0}$ assumed in the microcanonical state. The total energy is written as
\begin{align*}
\sum_{\mathbf{x}}\mathcal{E}_{\mathbf{x}}=\frac{1}{2}\sum_{\mathbf{\xi} \neq \mathbf{0}}\{ |\tilde{\mathbf{q}}(\mathbf{\xi})|^2+|\tilde{\mathbf{v}}(\mathbf{\xi})|^2\} 
=\frac{1}{2}\sum_{\mathbf{\xi} \neq \mathbf{0}}\{ \Re^2(\tilde{\mathbf{q}}(\mathbf{\xi}))+\Im^2(\tilde{\mathbf{q}}(\mathbf{\xi}))+ \Re^2(\tilde{\mathbf{v}}(\mathbf{\xi}))+\Im^2(\tilde{\mathbf{v}}(\mathbf{\xi}))\}.
\end{align*}
On $\Z^d_N \setminus \{\mathbf{0}\}$, we define an equivalence relation $\mathbf{\xi} \sim \mathbf{\xi}'$ if and only if $\mathbf{\xi} =-\mathbf{\xi}'$. Denote the class of representatives for $\sim$ by $\mathcal{U}_N^d$. If $\mathbf{\xi} \sim \mathbf{\xi}'$, then we have
\begin{align*}
\Re(\tilde{\mathbf{q}}(\mathbf{\xi}))&=\Re(\tilde{\mathbf{q}}(\mathbf{\xi}')), \quad \Re(\tilde{\mathbf{v}}(\mathbf{\xi}))=\Re(\tilde{\mathbf{v}}(\mathbf{\xi}')) \\
\Im(\tilde{\mathbf{q}}(\mathbf{\xi}))&=-\Im(\tilde{\mathbf{q}}(\mathbf{\xi}')), \quad \Im(\tilde{\mathbf{v}}(\mathbf{\xi}))=-\Im(\tilde{\mathbf{v}}(\mathbf{\xi}')).
\end{align*}
Therefore, if $N$ is odd
\[
\sum_{\mathbf{x}}\mathcal{E}_{\mathbf{x}}=\sum_{\mathbf{\xi} \in \mathcal{U}_N^d}\{ \Re^2(\tilde{\mathbf{q}}(\mathbf{\xi}))+\Im^2(\tilde{\mathbf{q}}(\mathbf{\xi}))+ \Re^2(\tilde{\mathbf{v}}(\mathbf{\xi}))+\Im^2(\tilde{\mathbf{v}}(\mathbf{\xi}))\}
\]
and if $N$ is even, 
\[
\sum_{\mathbf{x}}\mathcal{E}_{\mathbf{x}}=\sum_{\mathbf{\xi} \in \mathcal{U}_N^d, \mathbf{\xi} \neq \mathbf{\xi}^N }\{ \Re^2(\tilde{\mathbf{q}}(\mathbf{\xi}))+\Im^2(\tilde{\mathbf{q}}(\mathbf{\xi}))+ \Re^2(\tilde{\mathbf{v}}(\mathbf{\xi}))+\Im^2(\tilde{\mathbf{v}}(\mathbf{\xi}))\} + \frac{1}{2}\{ \Re^2(\tilde{\mathbf{q}}(\mathbf{\xi}^N))+ \Re^2(\tilde{\mathbf{v}}(\mathbf{\xi}^N))\}
\]
where $\mathbf{\xi}^N=(\frac{N}{2},\frac{N}{2},\dots,\frac{N}{2})$.

Note that the cardinality of $\mathcal{U}_N^d$ is $\frac{N^d-1}{2}$ if $N$ is odd and $\frac{N^d}{2}$ if $N$ is even. If $N$ is odd, under the measure $\mu_{N,E}$, the random variables 
\[
(\Re(\tilde{\mathbf{q}}(\mathbf{\xi})), \Im(\tilde{\mathbf{q}}(\mathbf{\xi})),\Re(\tilde{\mathbf{v}}(\mathbf{\xi})), \Im(\tilde{\mathbf{v}}(\mathbf{\xi}))_{\mathbf{\xi} \in \mathcal{U}_N^d}
\]
are distributed according to the uniform measure on $2d^*(N^d-1)$-dimensional sphere of radius $\sqrt{N^dE}$. If $N$ is even, under the measure $\mu_{N,E}$, the random variables 
\[
(\Re(\tilde{\mathbf{q}}(\mathbf{\xi})), \Im(\tilde{\mathbf{q}}(\mathbf{\xi})),\Re(\tilde{\mathbf{v}}(\mathbf{\xi})), \Im(\tilde{\mathbf{v}}(\mathbf{\xi}))_{\mathbf{\xi} \in \mathcal{U}_N^d, \mathbf{\xi} \neq \mathbf{\xi}^N}, \big(\frac{\Re(\tilde{\mathbf{q}}(\mathbf{\xi}^N))}{\sqrt{2}}, \frac{\Re(\tilde{\mathbf{v}}(\mathbf{\xi}^N))}{\sqrt{2}}\big)
\]
are distributed according to the uniform measure on $2d^*(N^d-1)$-dimensional sphere of radius $\sqrt{N^dE}$.

Then, we can conclude (i) since if $N$ is odd,
\begin{align*}
E_{N,E}[(v_{\mathbf{0}}^j)^2]& =E_{N,E}[( \frac{1}{N^d}\sum_{\mathbf{\xi} \in \Z_N^d}\hat{v}^j(\mathbf{\xi}))^2] = \frac{1}{N^d} E_{N,E}[(\sum_{\mathbf{\xi} \in \Z_N^d, \mathbf{\xi} \neq \mathbf{0}}\tilde{v}^j(\mathbf{\xi}))^2] \\
& = \frac{4}{N^d}\sum_{\mathbf{\xi} \in \mathcal{U}_N^d}  E_{N,E}[\Re(\tilde{v}^j(\mathbf{\xi}))^2] = \frac{4}{N^d}\frac{N^d-1}{2}\frac{N^dE}{2d^*(N^d-1)}=\frac{E}{d^*}.
\end{align*}
We can also do a similar computation and conclude the same result if $N$ is even. 

For (ii), if $N$ is odd, we have
\begin{align*}
& E_{N,E}[(v_{\mathbf{0}}^j)^4] =E_{N,E}[( \frac{1}{N^d}\sum_{\mathbf{\xi} \in \Z_N^d}\hat{v}^j(\mathbf{\xi}))^4] = \frac{1}{N^{2d}} E_{N,E}[(\sum_{\mathbf{\xi} \in \Z_N^d, \mathbf{\xi} \neq \mathbf{0} }\tilde{v}^j(\mathbf{\xi}))^4] \\
& = \frac{16}{N^{2d}}\sum_{\mathbf{\xi} \in \mathcal{U}_N^d}  E_{N,E}[\Re(\tilde{v}^j(\mathbf{\xi}))^4]+\frac{48}{N^{2d}}\sum_{\mathbf{\xi} \neq \mathbf{\xi}' \in \mathcal{U}_N^d}  E_{N,E}[\Re(\tilde{v}^j(\mathbf{\xi}))^2 \Re(\tilde{v}^j(\mathbf{\xi}'))^2]\\
& = \frac{16}{N^{2d}}\frac{N^d-1}{2} E_{N,E}[\Re(\tilde{v}^j(\mathbf{\mathbf{e}_1}))^4]+\frac{48}{N^{2d}}\frac{(N^d-1)(N^d-3)}{4} E_{N,E}[\Re(\tilde{v}^j(\mathbf{e}_1))^2 \Re(\tilde{v}^j(2\mathbf{e}_1))^2].
\end{align*}
Then, we can apply the classical equivalence of ensembles under the uniform measure on the sphere to conclude (ii).
Similar arguments also work for the case that $N$ is even, and also for (iii).

Finally, we prove (iv). We only deal with the case that $N$ is odd, since the other case can be shown in the same way. By definition,
\begin{align*}
q_{\mathbf{x}}^j - q_{-\mathbf{x}}^j = \frac{2i}{N^d}\sum_{\mathbf{\xi} \in \Z^d_N } \sin(2\pi  \frac{\mathbf{\xi}}{N} \cdot \mathbf{x}) \hat{q}^j(\mathbf{\xi}), \quad v_{\mathbf{0}}^j=\frac{1}{N^d}\sum_{\mathbf{\xi} \in \Z^d_N } \hat{v}^j(\mathbf{\xi})
\end{align*}
and so 
\begin{align*}
& E_{N,E}[(q_{\mathbf{x}}^j - q_{-\mathbf{x}}^j)(q_{\mathbf{e}_1}^j-q_{-\mathbf{e}_1}^j)(v_{\mathbf{0}}^j)^2] \\
& = \frac{-4}{N^{4d}}\sum_{\mathbf{\xi},\mathbf{\xi}',\mathbf{\zeta},\mathbf{\zeta}' \in \Z^d_N }\sin(2\pi  \frac{\mathbf{\xi}}{N} \cdot \mathbf{x})\sin(2\pi  \frac{\mathbf{\xi}'}{N} \cdot \mathbf{e}_1) E_{N,E}[\hat{q}^j(\mathbf{\xi})\hat{q}^j(\mathbf{\xi}')\hat{v}^j(\mathbf{\zeta})\hat{v}^j(\mathbf{\zeta}')] \\
& = \frac{-4}{N^{4d}}\sum_{\mathbf{\xi},\mathbf{\xi}',\mathbf{\zeta},\mathbf{\zeta}' \in \Z^d_N, \mathbf{\xi},\mathbf{\xi}',\mathbf{\zeta},\mathbf{\zeta}' \neq \mathbf{0}}\sin(2\pi  \frac{\mathbf{\xi}}{N} \cdot \mathbf{x})\sin(2\pi  \frac{\mathbf{\xi}'}{N} \cdot \mathbf{e}_1) E_{N,E}[\hat{q}^j(\mathbf{\xi})\hat{q}^j(\mathbf{\xi}')\hat{v}^j(\mathbf{\zeta})\hat{v}^j(\mathbf{\zeta}')].
\end{align*}
The last term is equal to 
\begin{align*}
\frac{-4}{N^{3d}}\sum_{\mathbf{\xi},\mathbf{\xi}',\mathbf{\zeta},\mathbf{\zeta}' \in \Z^d_N, \mathbf{\xi},\mathbf{\xi}',\mathbf{\zeta},\mathbf{\zeta}' \neq \mathbf{0} }\frac{\sin(2\pi  \frac{\mathbf{\xi}}{N} \cdot \mathbf{x})\sin(2\pi  \frac{\mathbf{\xi}'}{N} \cdot \mathbf{e}_1) }{\omega^N(\mathbf{\xi})\omega^N(\mathbf{\xi}')}E_{N,E}[\tilde{q}^j(\mathbf{\xi})\tilde{q}^j(\mathbf{\xi}')\tilde{v}^j(\mathbf{\zeta})\tilde{v}^j(\mathbf{\zeta}')].
\end{align*}

From the explicit distribution of $(\tilde{\mathbf{q}}, \tilde{\mathbf{v}})$ studied above, if $\mathbf{\xi} \nsim \mathbf{\xi}'$, then 
\[E_{N,E}[\tilde{q}^j(\mathbf{\xi})\tilde{q}^j(\mathbf{\xi}')\tilde{v}^j(\mathbf{\zeta})\tilde{v}^j(\mathbf{\zeta}')]=-E_{N,E}[\tilde{q}^j(\mathbf{\xi})\tilde{q}^j(\mathbf{\xi}')\tilde{v}^j(\mathbf{\zeta})\tilde{v}^j(\mathbf{\zeta}')]=0.
\] 
The same is true if $\mathbf{\zeta} \nsim \mathbf{\zeta}'$. 
Also, if $\mathbf{\xi} = \mathbf{\xi}'$, then
\begin{align*}
& E_{N,E}[\tilde{q}^j(\mathbf{\xi})\tilde{q}^j(\mathbf{\xi}')\tilde{v}^j(\mathbf{\zeta})\tilde{v}^j(\mathbf{\zeta}')] \\
& =E_{N,E}[\big(\Re(\tilde{q}^j(\mathbf{\xi}))+i \Im(\tilde{q}^j(\mathbf{\xi}))\big)^2 \tilde{v}^j(\mathbf{\zeta})\tilde{v}^j(\mathbf{\zeta}')] \\
& = E_{N,E}[(\Re(\tilde{q}^j(\mathbf{\xi}))^2- \Im(\tilde{q}^j(\mathbf{\xi}))^2) \tilde{v}^j(\mathbf{\zeta})\tilde{v}^j(\mathbf{\zeta}')] +2i E_{N,E}[\Re(\tilde{q}^j(\mathbf{\xi})) \Im(\tilde{q}^j(\mathbf{\xi})) \tilde{v}^j(\mathbf{\zeta})\tilde{v}^j(\mathbf{\zeta}')]=0.
\end{align*}
The same is true if $\mathbf{\zeta} = \mathbf{\zeta}'$. Therefore, the term $E_{N,E}[\tilde{q}^j(\mathbf{\xi})\tilde{q}^j(\mathbf{\xi}')\tilde{v}^j(\mathbf{\zeta})\tilde{v}^j(\mathbf{\zeta}')] \neq 0$  if and only if $\mathbf{\xi}=-\mathbf{\xi}'$ and $\mathbf{\zeta}=-\mathbf{\zeta}'$. Moreover, \begin{align*}
E_{N,E}[\tilde{q}^j(\mathbf{\xi})\tilde{q}^j(\mathbf{-\xi})\tilde{v}^j(\mathbf{\zeta})\tilde{v}^j(\mathbf{-\zeta})] & =E_{N,E}[\big(\Re(\tilde{q}^j(\mathbf{\xi}))^2+ \Im(\tilde{q}^j(\mathbf{\xi}))^2 \big) \big(\Re(\tilde{v}^j(\mathbf{\zeta}))^2+ \Im(\tilde{v}^j(\mathbf{\zeta}))^2 \big)]  \\
&=4E_{N,E}[\Re(\tilde{q}^j(\mathbf{e}_1))^2\Re(\tilde{v}^j(\mathbf{e}_1))^2]
\end{align*}
 does not depend on $\xi, \zeta$. From the explicit distribution
 \[
4E_{N,E}[\Re(\tilde{q}^j(\mathbf{e}_1))^2\Re(\tilde{v}^j(\mathbf{e}_1))^2]=\frac{E^2}{{d^*}^2}+O(N^{-d}).\]
 Therefore,
\begin{align*}
& E_{N,E}[(q_{\mathbf{x}}^1 - q_{-\mathbf{x}}^1)(q_{\mathbf{e}_1}^1-q_{-\mathbf{e}_1}^1)(v_{\mathbf{0}}^1)^2] \\
& = \frac{4}{N^{3d}} \left(\frac{E^2}{{d^*}^2}+O(N^{-d})\right) \sum_{\mathbf{\xi},\mathbf{\zeta} \in \Z^d_N, \mathbf{\xi},\mathbf{\zeta} \neq \mathbf{0} }\frac{\sin(2\pi  \frac{\mathbf{\xi}}{N} \cdot \mathbf{x})\sin(2\pi  \frac{\mathbf{\xi}}{N} \cdot \mathbf{e}_1) }{\omega^N(\mathbf{\xi})\omega^N(\mathbf{\xi})} \\
&= \frac{1}{N^{d}} \left(\frac{E^2}{{d^*}^2}+O(N^{-d})\right) \sum_{\mathbf{\xi} \in \Z^d_N, \mathbf{\xi}\neq \mathbf{0}}\frac{\sin(2\pi  \frac{\mathbf{\xi}}{N} \cdot \mathbf{x})\sin(2\pi  \frac{\mathbf{\xi}}{N} \cdot \mathbf{e}_1) }{\sum_{a=1}^d\sin(\frac{\pi \mathbf{\xi}^a}{N})^2}.
\end{align*}
\end{proof}


\begin{thebibliography}{9}





\bibitem{BBO} {\sc G. \ Basile, C.\ Bernardin, S.\ Olla}, {\em Thermal Conductivity for a Momentum Conservative Model},
Comm. \ Math. \ Phys, \textbf{287} (2009), 67--98.

\bibitem{BBJKO} {\sc G. \ Basile, C. \ Bernardin, M. \ Jara, T. \ Komorowski and S. \  Olla}, {\em Thermal Conductivity in Harmonic Lattices with Random Collisions}, Lecture Notes in Physics, \textbf{921} (2016), 215--237. 

\bibitem{BG} {\sc C. \ Bernardin and P. \ Gon\c{c}alves}, {\em Anomalous Fluctuations for a Perturbed Hamiltonian System with Exponential Interactions}, Comm. \ Math. \ Phys, \textbf{325} (2014), 291--332.

\bibitem{BGJ} {\sc C. \ Bernardin, P. \ Gon\c{c}alves and M. \ Jara}, {\em $3/4$-fractional superdiffusion in a system of harmonic oscillators perturbed by a conservative noise}, Arch. Rational Mech. Anal. \textbf{220} (2016), 505--542. 


\bibitem{BS} {\sc C.\ Bernardin and G.\ Stoltz}, {\em Anomalous diffusion for a class of systems with two conserved quantities}, Nonlinearity, \textbf{25} (2012), 1099--1133.
	

\bibitem{dhar08} {\sc A. \ Dhar}, {\em Heat transport in low-dimensional systems}, Adv. \ Phys. {\bf 57}, 457 (2008).


\bibitem{JKO} {\sc M. \ Jara, T. \ Komorowski and S. \ Olla}, {\em Superdiffusion of energy in a chain of harmonic oscillators with noise}, Commun. \ Math. \ Phys. {\bf 339}, 407--453 (2015).

\bibitem{JHY} {\sc B. \ R. \ Johnson, J.\ O. \ Hirschfelder and K. \ Yang}, {\em Interaction of atoms, molecules, and ions with constant electric and
magnetic fields}, Rev. \ Mod. \ Phys. {\bf 55}, 109 (1983).


\bibitem{KO} {\sc T. \ Komorowski and S. \  Olla}, {\em Diffusive Propagation of Energy in a Non-acoustic Chain}, Arch. Rational Mech. Anal., \textbf{223} (2017), 95--139 

\bibitem{L} {\em Thermal Transport in Low Dimensions: From Statistical Physics to Nanoscale Heat Transfer}, Edited by S. Lepri, Springer (2016).

\bibitem{LLP03} {\sc S. \ Lepri, R. \ Livi, and A.\ Politi}, 
{\em Thermal conduction in classical low-dimensional lattices}, Phys. \ Rep, {\bf 377}, 1 (2003).

\bibitem{NR} {\sc O. \ Narayan and S. \ Ramaswamy}, {\em Anomalous Heat Conduction in One-Dimensional Momentum-Conserving Systems}, Phys. \ Rev. \ Lett. {\bf 89}, 200601 (2002).


\bibitem{Popkov} {\sc V. \ Popkov, A. \ Schadschneider, J. Schmidt, and G.M. Sch{\"u}tz}, 
{ \em Fibonacci family of dynamical universality classes},
	Proceedings of the National Academy of Sciences, {\bf 112} (41) 12645-12650 (2015).
	

\bibitem{P} {\sc Philip E.\ Protter}, {\em Stochastic Integration and Differential Equations}, 2005, Springer. 

    
    \bibitem{TSS} {\sc S. \ Tamaki, M. \ Sasada and K. \ Saito}, {\em
   Heat transport via low-dimensional systems with broken time-reversal symmetry}, Phys. \ Rev. \ Lett. {\bf 119}, 110602 (2017).

    
    \bibitem{Sp} {\sc H. \ Spohn}, {\em
   Nonlinear Fluctuating Hydrodynamics for Anharmonic Chains}, J. \ Stat. \ Phys. {\bf 154}, 1191--127 (2014).
    
      \bibitem{SS} {\sc H. \ Spohn and G. \ Stoltz}, {\em
   Nonlinear Fluctuating Hydrodynamics in One Dimension: The Case of Two Conserved Fields}, J. \ Stat. \ Phys. {\bf 160}, 861--884 (2015).

    
    \end{thebibliography}
\end{document}